\documentclass[a4paper,11pt,oneside,headsepline,bibtotoc]{scrartcl}

\usepackage{ifdraft}

%%%%%%%%%%%%%%%%%%%%%%%%%%%%%%%%%%%%%%%%%%%%%%%%%%
%%% ENCODING
\usepackage[utf8]{inputenc}
\usepackage[T1]{fontenc}

%%%%%%%%%%%%%%%%%%%%%%%%%%%%%%%%%%%%%%%%%%%%%%%%%%
%%% FONTS

\usepackage{lmodern}
\usepackage{fourier}
%\usepackage{tgtermes}
%\usepackage{tgheros}
%\usepackage[bitstream-charter]{mathdesign}

% comment out when using mathdisplay
\usepackage{amssymb, amsfonts}

\usepackage{mathrsfs} % script characters (\mathscr)
\usepackage{dsfont}
\usepackage[usenames,dvipsnames]{xcolor}%, colortbl}
\usepackage{lettrine}
\usepackage{url}

%%%%%%%%%%%%%%%%%%%%%%%%%%%%%%%%%%%%%%%%%%%%%%%%%%
%%% SYMBOLS

\usepackage{stmaryrd}
% More characters
%\usepackage{textcomp}
% Various characters; load after amsmath
%\usepackage{mathbbol}
% Script characters (\mathscr)
%\usepackage{mathrsfs}
% Physical units
%\usepackage{units}
% Other fractions (slash)
%\usepackage{nicefrac}
%\usepackage{eurosym}

%%%%%%%%%%%%%%%%%%%%%%%%%%%%%%%%%%%%%%%%%%%%%%%%%%
%%% LANGUAGE SUPPORT

\usepackage[english]{babel}

%%%%%%%%%%%%%%%%%%%%%%%%%%%%%%%%%%%%%%%%%%%%%%%%%%
%%% LAYOUT

\usepackage{scrpage2}
\usepackage[margin=7em]{geometry}
\usepackage[compact]{titlesec}
\usepackage{needspace}

\usepackage{amsmath}
\usepackage[thmmarks, amsmath, amsthm]{ntheorem}

\usepackage[all]{xy}

%%%%%%%%%%%%%%%%%%%%%%%%%%%%%%%%%%%%%%%%%%%%%%%%%%
%%% REFERENCES

% References by the AMS. "initials" lists only initials of given names.
%\usepackage[initials]{amsrefs}
\usepackage[style=alphabetic,firstinits,url=false,sortcites=true,maxbibnames=99,backend=bibtex]{biblatex}
\bibliography{bibliography.bib}

\ifdraft{{}}
  {\usepackage[pdftex,
               pdfborder={0 0 0}, colorlinks=true,
               linkcolor=BrickRed, citecolor=ForestGreen, urlcolor=RoyalBlue]{hyperref}}

%%%%%%%%%%%%%%%%%%%%%%%%%%%%%%%%%%%%%%%%%%%%%%%%%%
%%% MISC

%\ifdraft{\usepackage[colorinlistoftodos]{todonotes}}{}

% NOTE: \linenumbers must be activated to use this
%\usepackage{lineno}

%\usepackage[ruled]{algorithm2e}
%\usepackage{array}
\usepackage{booktabs}
\usepackage{enumitem}
\usepackage{float}
\usepackage{multicol}

\ifdraft{\date{\today}}{\date{}}

%%%%%%%%%%%%%%%%%%%%%%%%%%%%%%%%%%%%%%%%%%%%%%%%%%
%%% MISC

%\linespread{1.08}
%\linespread{1.5}
%\linespread{3}

% package lineno must be activated
%\linenumbers

% avoid line breakes
%\sloppy

% add space after punctuation marks.
%\nonfrenchspacing

% itemization spacing for package enumitem
%\setitemize{itemsep=0.05em}

%%%%%%%%%%%%%%%%%%%%%%%%%%%%%%%%%%%%%%%%%%%%%%%%%%
%%% PAGE GEOMETRIE

\KOMAoptions{DIV=13}

%%%%%%%%%%%%%%%%%%%%%%%%%%%%%%%%%%%%%%%%%%%%%%%%%%
%%% PAGE HEADERS

\pagestyle{scrheadings}
\clearscrheadings
\automark[section]{subsection}

\renewcommand{\subsectionmark}[1]{}

\cfoot[--\,\pagemark\,--]{--\,\pagemark\,--}
\lohead{{\small \headertitle \;---\, \rightmark}}
\rohead{{\small \headerauthors}}

\newenvironment{plainfootnotes}{
  \deffootnote[0em]{0em}{0em}{}
}{
  \deffootnote[1em]{1.5em}{1em}{\textsuperscript{\thefootnotemark}}
}

%%%%%%%%%%%%%%%%%%%%%%%%%%%%%%%%%%%%%%%%%%%%%%%%%%
%%% SECTION TITLES

\newif\ifsubsectionstylePrefixParagraph
\newif\ifsubsectionstyleRunin

\subsectionstylePrefixParagraphtrue
\subsectionstyleRunintrue

%% SECTION

\titleformat{\section}[hang]
{\Large\sffamily\bfseries}
{\thesection\hspace{0.25em}{}}{0.25em}{}

%% SUBSECTION

\ifsubsectionstyleRunin
\ifsubsectionstylePrefixParagraph

\titleformat{\subsection}[runin]
{\normalfont\bfseries}
{\S\hspace{.1em}\thesubsection}{0.35em}{}[.\hspace*{.5em}]

\else

\titleformat{\subsection}[runin]
{\normalfont\bfseries}
{\thesubsection\hspace{.1em}|}{0.25em}{}[.\hspace*{.5em}]

\fi
\else
\ifsubsectionstylePrefixParagraph

\titleformat{\subsection}[wrap]
{\normalfont\bfseries\selectfont\filright}
{\S\thesubsection}{.35em}{}
\titlespacing{\subsection}
{16pc}{1.5ex plus .1ex minus .2ex}{1pc}

\else

\titleformat{\subsection}[wrap]
{\normalfont\bfseries\selectfont\filright}
{\thesubsection\hspace{.1em}|}{.25em}{}
\titlespacing{\subsection}
{16pc}{1.5ex plus .1ex minus .2ex}{1pc}

\fi
\fi

%% SUBSUBSECTION

\ifsubsectionstylePrefixParagraph

\titleformat{\subsubsection}[runin]
{\normalfont\bfseries}
{\S\hspace{.1em}\thesubsubsection}{0.35em}{}[.]

\else

\titleformat{\subsubsection}[runin]
{\normalfont\bfseries}
{\thesubsubsection\hspace{.1em}|}{0.25em}{}[.]
  
\fi

%% PARAGRAPH

\titleformat{\paragraph}[runin]
{\normalfont\itshape}
{\S\hspace{.1em}\theparagraph}{0.35em}{}[.]

%%%%%%%%%%%%%%%%%%%%%%%%%%%%%%%%%%%%%%%%%%%%%%%%%%
%% BIBLIOGRAPHY

\AtEveryBibitem{\clearfield{isbn}}
\AtEveryBibitem{\clearlist{language}}
\AtEveryBibitem{\clearfield{pages}}

%%%%%%%%%%%%%%%%%%%%%%%%%%%%%%%%%%%%%%%%%%%%%%%%%%
%% ENUMERATION 

\newenvironment{enumeratearabic}{
\begin{enumerate}[label=(\arabic*), leftmargin=0pt,labelindent=2em,itemindent=!]
}{
\end{enumerate}
}

\newenvironment{enumerateroman}{
\begin{enumerate}[label=(\roman*), leftmargin=0pt,labelindent=2em,itemindent=!]
}{
\end{enumerate}
}

\newenvironment{enumeratearabic*}{
\begin{enumerate*}[label=(\arabic*)] %%, leftmargin=0pt,labelindent=2em,itemindent=!]
}{
\end{enumerate*}
}

\newenvironment{enumerateroman*}{
\begin{enumerate*}[label=(\roman*)] %%, leftmargin=0pt,labelindent=2em,itemindent=!]
}{
\end{enumerate*}
}

%%%%%%%%%%%%%%%%%%%%%%%%%%%%%%%%%%%%%%%%%%%%%%%%%%
%% THEOREM LIKE ENVIRONMENTS

\numberwithin{equation}{section}

\newtheorem{theoremcounter}{theoremcounter}[section]

\theoremstyle{plain}

\newtheorem{corollary}[theoremcounter]{Corollary}
\newtheorem{lemma}[theoremcounter]{Lemma}
\newtheorem{proposition}[theoremcounter]{Proposition}
\newtheorem{theorem}[theoremcounter]{Theorem}

\theoremnumbering{Roman}
\newtheorem{maintheoremcounter}{maintheoremcounter}

\newtheorem{maintheorem}[maintheoremcounter]{Theorem}

\newtheorem{mainremark}[maintheoremcounter]{Remark}

\newenvironment{mainremarkenumerate}
{
\begin{mainremark}
\begin{enumeratearabic}
}{
\end{enumeratearabic}
\end{mainremark}
}

\theoremstyle{definition}

\newtheorem{definition}[theoremcounter]{Definition}

\theoremstyle{remark}

\newtheorem{remark}[theoremcounter]{Remark}

\newtheorem*{remarkcomputation}{Computation}

\newenvironment{remarkenumerate}
{
\begin{remark}
\begin{enumeratearabic}
}{
\end{enumeratearabic}
\end{remark}
}

%%%%%%%%%%%%%%%%%%%%%%%%%%%%%%%%%%%%%%%%%%%%%%%%%%
%%% PYTHON KEYWORDS FOR ALGORITHM2E

% \SetStartEndCondition{ }{}{}%

% \SetKwProg{Pydef}{def}{\string:}{}
% \SetKwComment{tcc}{\#\ }{}

% \SetKwFor{For}{for}{\string:}{}%
% \SetKwFor{While}{while}{:}{fintq}%
% \SetKwIF{If}{ElseIf}{Else}{if}{:}{elif}{else:}{}%

% \SetKwFunction{Range}{range}%%
% \SetKw{KwIn}{in}
% \SetKw{KwNot}{not}
% \SetKw{KwOr}{or}
% \SetKw{KwInlineIf}{if}

% \AlgoDontDisplayBlockMarkers\SetAlgoNoEnd\SetAlgoNoLine%

%%%%%%%
%% Short cuts
%%%%%%%

%%%%%%
%% texorpdfstring for hyperref

\ifdraft{
 \newcommand{\texpdf}[2]{#1}
}{
 \newcommand{\texpdf}[2]{\texorpdfstring{#1}{#2}}
}

%%%%%%
%% insert comments and remarks

%% Substitutes for the deprecated \sc, \bf and \it commands

\newcommand{\tx}{\ensuremath{\text}}

\newcommand{\tbf}{\bfseries}

%% th dash and superscript

\newcommand{\thdash}{\nbd th}

%% a non breaking dash

\newcommand{\nbd}{\nobreakdash-\hspace{0pt}}

%%%%%%%
%% Fonts

%%%%%%%
%% Abbreviations for special fonts

\renewcommand{\frak}{\ensuremath{\mathfrak}}

%% backboard (upper case) letters

%% callicrafic (upper case) letters

%% fraktura lower case letters

\newcommand{\frakc}{\ensuremath{\frak{c}}}

\newcommand{\frake}{\ensuremath{\frak{e}}}
\newcommand{\frakf}{\ensuremath{\frak{f}}}

%% fraktura upper case letters

%% mathrm lower case letters

\newcommand{\rmt}{\ensuremath{\mathrm{t}}}

%% mathrm upper case letters

\newcommand{\rmE}{\ensuremath{\mathrm{E}}}

\newcommand{\rmH}{\ensuremath{\mathrm{H}}}

\newcommand{\rmM}{\ensuremath{\mathrm{M}}}

\newcommand{\rmS}{\ensuremath{\mathrm{S}}}

\newcommand{\rmV}{\ensuremath{\mathrm{V}}}

%% script upper case letters

%%%%%%
%% common decorators

\newcommand{\ov}{\overline}

\newcommand{\wht}{\widehat}

%%%%%%
%% brackets

%%%%%%
%% Arrows

\newcommand*{\longhookrightarrow}{\ensuremath{\lhook\joinrel\relbar\joinrel\rightarrow}}
\newcommand*{\longtwoheadrightarrow}{\ensuremath{\relbar\joinrel\twoheadrightarrow}}

\newcommand{\ra}{\ensuremath{\rightarrow}}
\newcommand{\hra}{\ensuremath{\hookrightarrow}}
\newcommand{\thra}{\ensuremath{\twoheadrightarrow}}

\newcommand{\lra}{\ensuremath{\longrightarrow}}
\newcommand{\lhra}{\ensuremath{\longhookrightarrow}}
\newcommand{\lthra}{\ensuremath{\longtwoheadrightarrow}}

\newcommand{\mto}{\ensuremath{\mapsto}}
\newcommand{\lmto}{\ensuremath{\longmapsto}}

%%%%%%%
%% Equation
% alternative \mid which can be used without \left, \right
%\newcommand{\amid}{\ensuremath{\, | \,}}
\newcommand{\amid}{\ensuremath{\mathop{\mid}}}

%%%%%%%
%% General

% The naturals, the integers the reals and the complex numbers

\newcommand{\ZZ}{\ensuremath{\mathbb{Z}}}
\newcommand{\QQ}{\ensuremath{\mathbb{Q}}}
\newcommand{\RR}{\ensuremath{\mathbb{R}}}
\newcommand{\CC}{\ensuremath{\mathbb{C}}}

% real and imaginary part
\renewcommand{\Re}{\ensuremath{\mathfrak{Re}}}
\renewcommand{\Im}{\ensuremath{\mathfrak{Im}}}

% divisibility
\newcommand{\isdiv}{\amid}

% reduction modulo _
\renewcommand{\pmod}[1]{\ensuremath{\;(\mathrm{mod}\, #1)}}

% sign function

% Homormophisms
\newcommand{\Hom}{\ensuremath{\mathop{\mathrm{Hom}}}}

% The Identity

%%%%%%%
%% Matrices

% Matrices of dimension #1 and entries in #2

% Alternative short cut for matrices

% Symmetric matrices

% % #1x#1 Matrices over C
% \newcommand{\Cmat}[1]{\ensuremath{\mathrm{M}_{#1}(\CC)}}
% % #1x#2 Matrices over C
% \newcommand{\Cmatnm}[2]{\ensuremath{\mathrm{M}_{#1 \times #2}(\CC)}}

% General, special, simplectic and invertible matrices
\newcommand{\GL}[1]{\ensuremath{\mathrm{GL}_{#1}}}

\newcommand{\SL}[1]{\ensuremath{\mathrm{SL}_{#1}}}

% the transpose written right and left
\newcommand{\T}{\ensuremath{\rmt}}
\newcommand{\rT}{\ensuremath{\,{}^\T\!}}

% the trace of a matrix or the normalised trace

% The non normalized trace

% a (block) diagonal matrix
\newcommand{\diag}{\ensuremath{\mathrm{diag}}}

% the rank of a matrix or module

% representations

\newcommand{\bbone}{\ensuremath{\mathds{1}}}

%% Functions
% Range and domain

%%%%%%
%% Funktionalanalysis

%% 'C^*' and 'W^*' shortcuts

%% Hilbert space and operator algebra
% The bounded operators

% The compact operators

% A Hilbert space or Hilbert module

% The sigma-weak topology

% The sigma-strong topology

% The 2-norm on a von Neumann algebra

% The tensor product of Hilbert spaces

% The category of finite dimensional Hilbert spaces

% Support of an Operator

% Image of an Operator

%% C*-algebras
% The reduced group C*-algebra

% The maximal group C*-algebra

%% von Neumann algebras
% The fundamental group

% The category of bimodules

% The fusion algebra of bimoudles

% The category of almost normalizing bimodules

% The fusion algebra of almost normalising bimoudles

% The outer automorphism group
% See Grouptheory
% The tensor product of von Neumann algebras

% A bimodule over a von Neumann algebra

% A left module over a von Neumann algebra

% Intertwining

% The Index of a conditional expectation

%% Function spaces
% Continous functions

% Bounded continous functions

% Bounded continous functions vanishing at infinity

% Continous functions with compact support

% L^\infinity functions

% L^2 functions

% L^1 functions

% l^2

% 2-summable sequences

%% Vector spaces
% Linear span
\newcommand{\lspan}{\ensuremath{\mathop{\mathrm{span}}}}
% Closed linear span

% Dimension

%%%%%%
%% number theory and modular forms

%% half space
\newcommand{\HS}{\mathbb{H}}

%% Hecke algebras (Krieg's notation)
%\newcommand{\Inv}[1]{\ensuremath{\mathrm{Inv}_{#1}}}

%%%%%%%
%% Grouptheory

%% Theory
% Automorphism group

% Inner Automorphism group

% Adjunction

% The outer automorphism group

% The character group

% The center of a group

% The category of (unitary) representations

% A groupoid G

% An action of a Group

%% Examples
% The free group on n generators

% The symmetric group of n points
%\newcommand{\sym}[1]{\ensuremath{\mathrm{S}}_{#1}}

%%%%%%%%
%% Quantum groups and Quantum groupoids

% The antipode

%%%%%%%
%% Category Theory

% The objects of a category

% A category C or D

% The intrinsic group of a rigid monoidal category

% The category of modules over A

% Kernel, Cokernel, Image and Coimage

%%%%%%%%
%% Probability Theory

% The expectation

% The probability measure

% The conditional exapectation

% The probability measure

% Integrating against P

\newcommand{\Ind}{\ensuremath{\mathrm{Ind}}}

%%%%%%%%%%%%%%%%%%%%%%%%%%%%%%%%%%%%%%%%%%%%%%%%%%
%%% TITLE INFORMATION

\newcommand{\headertitle}{{\normalfont%
  Products of Vector Valued Eisenstein Series
}}
\newcommand{\headerauthors}{
  M.~Westerholt-Raum
}

\begin{document}

\begin{plainfootnotes}
\begin{flushleft}
{\fontfamily{lms}\sffamily
  \hspace{20pt}{\huge%
  Products of Vector Valued Eisenstein Series%
  % \\\hspace{20pt}{\huge%
  % %% TITLE LINE 2
  }
}
% \\{\fontfamily{lms}\sffamily
%   \hspace{20pt}%
%   %% SUBTITLE
% }
\\[.6em]\hspace{20pt}{\large%
  Martin Westerholt-Raum%
  \footnote{The author thanks the Max Planck Institute for Mathematics for their hospitality.  The paper was partially written, while he was supported by the ETH Zurich Postdoctoral Fellowship Program and by the Marie Curie Actions for People COFUND Program.}
}
\\[1.2em]
\end{flushleft}
\end{plainfootnotes}

\thispagestyle{scrplain}

%%%%%%%%%%%%%%%%%%%%%%%%%%%%%%%%%%%%%%%%%%%%%%%%%%
%%% ABSTRACT

{\small
\noindent
{\tbf Abstract:}
We prove that products of at most two vector valued Eisenstein series that originate in level~$1$ span all spaces of cusp forms for congruence subgroups.  This can be viewed as an analogue in the level aspect to a result that goes back to Rankin, and Kohnen and Zagier, which focuses on the weight aspect.  The main feature of the proof are vector valued Hecke operators.  We recover several classical constructions from them, including classical Hecke operators, Atkin-Lehner involutions, and oldforms.   As a corollary to our main theorem, we obtain a vanishing condition for modular forms reminiscent of period relations deduced by Kohnen and Zagier in the context their previously mentioned result.
\\[.35em]
\textsf{\textbf{%
  vector valued Hecke operators%
\hspace{0.3em}{\tiny$\blacksquare$}\hspace{0.3em}%
 period relations%
\hspace{0.3em}{\tiny$\blacksquare$}\hspace{0.3em}%
 cusp expansions of modular forms%
}}
\\[0.15em]
\noindent
\textsf{\textbf{%
  MSC Primary:
  11F11%
\hspace{0.3em}{\tiny$\blacksquare$}\hspace{0.3em}%
  MSC Secondary:
  11F67
}}
}

%%%%%%%%%%%%%%%%%%%%%%%%%%%%%%%%%%%%%%%%%%%%%%%%%%
%%% TABLE OF CONTENTS

\vspace{-1.5em}
\renewcommand{\contentsname}{}
\setcounter{tocdepth}{2}
\tableofcontents
\vspace{1.5em}

%%%%%%%%%%%%%%%%%%%%%%%%%%%%%%%%%%%%%%%%%%%%%%%%%%
%%% INTRODUCTION

\Needspace*{4em}
\addcontentsline{toc}{section}{Introduction}
\markright{Introduction}
\lettrine[lines=2,nindent=.2em]{\tbf I}{n} a paper from 1951, published in 1952, Rankin~\cite{rankin-1952} derived an expression in terms of periods for scalar products $\langle E_l E_{k-l}, f \rangle$ for cuspidal Hecke eigenforms~$f$ of weight~$k$.  This served Kohnen and Zagier~\cite{kohnen-zagier-1984} in connecting modular forms with period polynomials. As an immediate consequence of their work one concludes that products of at most two Eisenstein series span all spaces of modular forms for~$\SL{2}(\ZZ)$.  In fact, it is possible to describe linear relations of the $E_l E_{k-l}$ by period polynomials~\cite{skoruppa-1993}.

Let us write $\rmM_k$ for the space of level~$1$, weight~$k$ modular forms, and $\rmE_k$ for the subspace of Eisenstein series. The aforementioned consequence of Kohnen's and Zagier's result reads as follows:
\begin{gather}
\label{eq:introduction-kohnen-zagier}
  \rmM_k
=
  \rmE_k
  +
  \lspan_{4 \le l \le k - 4} \rmE_l \cdot \rmE_{k-l}
\text{.}
\end{gather}
This was considered by Imamo\u glu and Kohnen~\cite{imamoglu-kohnen-2005} in the case of $\Gamma_0(2)$, and also generalized to $\Gamma_0(p) \subset \SL{2}(\ZZ)$ by Kohnen and Martin~\cite{kohnen-martin-2008}.  A similar statement, based on products of at most two ``toric'' modular forms, was found by Borisov and Gunnells in~\cite{borisov-gunnells-2003}.

Note that we can view \eqref{eq:introduction-kohnen-zagier} as a statement about products of Eisenstein series focusing on the weight aspect.  We study the level aspect of the same problem:  Can cusp forms of any level be expressed as products of at most two Eisenstein series, varying the level but fixing the weight?  The affirmative answer, which we provide, is most naturally phrased in terms of vector valued modular forms.  For a complex representation $\rho$ of $\SL{2}(\ZZ)$ and $k \in \ZZ$ write $\rmM_k(\rho)$ for the space of all vector valued modular forms of weight~$k$ and type~$\rho$.  By definition, $f \in \rmM_k(\rho)$ satisfies
\begin{gather*}
  f\big( \frac{a \tau + b}{c \tau + d} \big)
=
  \rho\big(\!\! \left(\begin{smallmatrix} a & b \\ c & d \end{smallmatrix}\right) \!\!\big)\,
  (c \tau + d)^k\, f(\tau)
\end{gather*}
for all $\left(\begin{smallmatrix} a & b \\ c & d \end{smallmatrix}\right) \in \SL{2}(\ZZ)$.  A corresponding subspace $\rmE_k(\rho)$ of Eisenstein series can be defined in a natural way, which is explained in Section~\ref{sec:eisenstein-series}.  For every $N$, we will define in Section~\ref{sec:hecke-operators} a vector valued Hecke operator $T_N$.  It yields a map $T_N :\, \rmE_k(\bbone) \ra \rmE_k(\rho_{T_N})$, also described in Section~\ref{sec:hecke-operators}, where $\bbone$ is the trivial representation of~$\SL{2}(\ZZ)$ and $\rho_{T_N}$ in the easiest case is the permutation representation of $\SL{2}(\ZZ)$ on cosets $\Gamma_0(N) \backslash \SL{2}(\ZZ)$.  In genearl, $\rho_{T_N} = T_N\, \bbone$ defined in Section~\ref{ssec:hecke-operators-on-representations}. For the time being, it suffices to know that the components of $T_N\,\rmE_k(\bbone)$ can be expressed in terms of $E_k\big( \frac{a \tau + b}{d} \big)$, where $\left(\begin{smallmatrix} a & b \\ 0 & d \end{smallmatrix}\right) \in \GL{2}(\QQ)$.  As a further ingredient to formulate our Main Theorem, note that by composition, homomorphisms~$\phi:\, \rho \ra \sigma$ of representations give rise to maps between spaces of modular forms $\phi:\, \rmM_k(\rho) \ra \rmM_k(\sigma)$.
\begin{maintheorem}
\label{thm:maintheorem}
Let $\rho$ be a complex representation of $\SL{2}(\ZZ)$ whose kernel contains a congruence subgroup.  For even integers $k \ge 8$ and $l$ such that $l, k-l \ge 4$,  we have
\begin{gather}
\label{eq:product-of-eisenstein-series}
  \rmM_k(\rho)
=
  \rmE_k(\rho)
  +
  \lspan_{\substack{0 < N, N' \in \ZZ \\ \phi :\, \rho_{T_N} \otimes \rho_{T_{N'}} \ra \rho}}
  \phi\Big(\;  T_N\,\rmE_{l}(\bbone) \otimes T_{N'}\,\rmE_{k - l}(\bbone) \; \Big)
\,\text{.}
\end{gather}
The sum runs over homomorphisms~$\phi$ from $\rho_{T_N} \otimes \rho_{T_{N'}}$ to $\rho$ for positive integers~$N$ and~$N'$.
\end{maintheorem}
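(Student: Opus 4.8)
\emph{Sketch of proof.}
The plan is to argue by duality with respect to the Petersson scalar product. Any $\rho$ whose kernel contains a congruence subgroup factors through a finite quotient $\SL{2}(\ZZ/N)$ and is therefore unitarizable; we fix a $\rho$-invariant inner product and use the splitting $\rmM_k(\rho) = \rmE_k(\rho) \oplus \rmS_k(\rho)$ into Eisenstein and cusp forms, on the second summand of which the Petersson pairing is positive definite, while $\langle f, E\rangle = 0$ for $f \in \rmS_k(\rho)$ and $E \in \rmE_k(\rho)$ (by unfolding). Since $\rmE_k(\rho)$ already occurs in~\eqref{eq:product-of-eisenstein-series}, and writing $P$ for the cuspidal projection, the asserted identity is equivalent to $P\big(\lspan_{N,N',\phi}\phi(T_N\rmE_l(\bbone)\otimes T_{N'}\rmE_{k-l}(\bbone))\big) = \rmS_k(\rho)$, and by non-degeneracy of the pairing on $\rmS_k(\rho)$ this in turn is equivalent to the implication: if $f \in \rmS_k(\rho)$ is orthogonal to every $\phi(T_N\rmE_l(\bbone)\otimes T_{N'}\rmE_{k-l}(\bbone))$, then $f = 0$.

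The second step reduces this to classical modular forms. Every congruence-type $\rho$ is a direct summand of a sum of copies of $\Ind_{\Gamma(N)}^{\SL{2}(\ZZ)}\bbone$, and $\rho\mapsto\rmM_k(\rho)$ is functorial for homomorphisms of representations --- a surjection $\sigma\thra\rho$ yields a surjection $\rmM_k(\sigma)\thra\rmM_k(\rho)$ carrying $\rmE_k(\sigma)$ onto $\rmE_k(\rho)$ and the spanning products onto the spanning products --- so it suffices to treat $\rho = \Ind_{\Gamma}^{\SL{2}(\ZZ)}\bbone$ for congruence subgroups $\Gamma$, where $\rmM_k(\rho)$ is identified with $\rmM_k(\Gamma)$. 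Under this identification $\phi(T_N\rmE_l(\bbone)\otimes T_{N'}\rmE_{k-l}(\bbone))$ becomes a $\Gamma$-invariant linear combination of products $E_l\big(\tfrac{a\tau + b}{d}\big)E_{k-l}\big(\tfrac{a'\tau + b'}{d'}\big)$ with $ad = N$ and $a'd' = N'$; here I would invoke the analysis of the representations $\rho_{T_N}$ and of the vector valued Hecke operators from Section~\ref{sec:hecke-operators} to see that, as $N$, $N'$ and $\phi$ range, these products are exhaustive in every level --- equivalently, that the $\rho_{T_N}$ together with their tensor products generate all congruence-type representations under sub-objects, quotients and direct sums.

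The analytic core is a Rankin--Selberg unfolding. Given a cusp form $g$ of level $\Gamma$, a $\GL{2}^+(\QQ)$ change of variables turns one of the two translated Eisenstein factors into an $\SL{2}(\ZZ)$-Eisenstein series, and using $\langle f, h|\gamma\rangle = \langle f|\gamma^{-1}, h\rangle$ up to a positive constant one moves the translation onto $g$ and the other factor; unfolding the $\SL{2}(\ZZ)$-Eisenstein series then gives, up to the cuspidal contributions at the various cusps of $\Gamma$,
\begin{gather*}
  \big\langle g,\; E_l\big(\tfrac{a\tau + b}{d}\big)\, E_{k-l}(\tau)\big\rangle
  = \frac{\Gamma(k-1)}{(4\pi)^{k-1}} \sum_{n \ge 1} \frac{a_g(n)\, \overline{a_{E_{k-l}}(n)}}{n^{k-1}}
\end{gather*}
and symmetrically upon unfolding the other factor. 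The coefficients of a translated level-$1$ Eisenstein series being twisted divisor sums, the Ramanujan identity identifies such a series --- for $g$ a Hecke eigenform --- with a product $L(g\otimes\chi, k-1)\, L(g\otimes\chi', w)$ of two Hecke $L$-values, where $w \in \{l, k-l\}$ is whichever of the two weights one chooses to keep; the point $s = k-1$ always lies in the region of absolute convergence of $L(g,s)$. Choosing $w = \min(l,k-l)$, for $l\ne k/2$ one has $w \le k/2 - 1 < (k-1)/2$, the series $\sum_n a_f(n)\overline{a_{E_l(\frac{a\tau+b}{d})}(n)}\, n^{-(k-1)}$ converges absolutely, and letting the translates and $N,N',\phi$ vary, the divisor-sum twists realize $n\mapsto a_f(n)$ multiplied by every Dirichlet character of every modulus; the hypothesis that all pairings vanish then yields $\sum_{n\equiv r\,(q)} a_f(n)\, n^{-(k-1)} = 0$ for all $q$ and $r$, and letting $q\to\infty$ with $n$ fixed in its residue class forces $a_f(n) = 0$, hence $f = 0$.

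I expect the main obstacle to be twofold. First, the representation-theoretic bookkeeping for the vector valued Hecke operators that underlies the reduction in the second paragraph: one must pin down the structure of $\rho_{T_N}$ and show that the family $\{\phi(T_N\rmE_l(\bbone)\otimes T_{N'}\rmE_{k-l}(\bbone))\}$ is genuinely exhaustive in every level and carries the full wealth of twists --- this is precisely where the machinery of Section~\ref{sec:hecke-operators} (from which classical Hecke operators, Atkin--Lehner involutions and oldforms are recovered) must do its work. Second, the symmetric case $l = k/2$ --- admissible since $k \ge 8$ --- where the Rankin--Selberg series only narrowly fails to converge absolutely and the second $L$-value produced is the central one, which may vanish for an individual form and twist; here one falls back on a non-vanishing theorem for the central twisted $L$-values $L(g\otimes\chi, k/2)$ (e.g.\ in the style of Rohrlich) applied to the Hecke translates $g$ of $f$, or reorganizes the family of pairings so that the Fourier coefficients of $f$ are still separated. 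Checking that the right-hand side of~\eqref{eq:product-of-eisenstein-series} is contained in $\rmM_k(\rho)$ and that the Eisenstein part is accounted for by $\rmE_k(\rho)$ is immediate.
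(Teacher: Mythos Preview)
Your overall architecture---reduce to cusp forms by orthogonality, pass to induced representations, unfold against one Eisenstein factor to obtain a Rankin convolution, and argue non-vanishing---matches the paper's. But two of the steps you flag as ``obstacles'' are in fact the heart of the proof, and your sketch does not supply them.

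The paper's decisive move is to show that the span $\rmE\rmE(\rho_\chi)$ is a \emph{classical Hecke module}: closed under $T_M$ for $M$ prime to the level, under Atkin--Lehner involutions, under oldform maps, and under twists. This is exactly what the machinery of Section~\ref{ssec:hecke-operators-known-constructions} (the explicit intertwiners $\pi_{\rm Hecke}$, $\pi_{\rm AL}$, $\pi_{\rm old}$, $\pi_{\rm twist}$) together with Lemma~\ref{la:EE-stable-under-morphisms-and-hecke} accomplishes. Only then can one assume the orthogonal $f$ is a \emph{newform}, after which the Rankin series factors via Proposition~\ref{prop:product-decomposition-of-convolution} as $L(f,k-1)L(f,k-l)$ up to a Dirichlet $L$-value, and both factors are nonzero in the convergent region for $l<k/2$. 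You invoke the factorization ``for $g$ a Hecke eigenform'' but give no reason why you may reduce to that case; without the Hecke-module property of $\rmE\rmE$ this reduction is unavailable.

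Your alternative---bypass newforms and separate Fourier coefficients directly---does not work as written. What unfolding produces is $\sum_n a_f(n)\,\overline{c_E(n)}\,n^{1-k}$ with $c_E(n)$ a \emph{twisted divisor sum}, not a bare character value. Varying the translates lets you vary the twist inside $\sigma_{l-1,\delta,\epsilon}(n)$, but you cannot strip the divisor sum to reach $\sum_{n\equiv r\,(q)} a_f(n)\,n^{1-k}=0$; that step is simply unjustified. For a newform the Hecke relations let you factor the divisor sum out as an $L$-function, which is exactly why the paper first reduces to newforms. Likewise, for $l=k/2$ the paper does not rely on a Rohrlich-type theorem: it shows (again via the Hecke-module property and $\pi_{\rm twist}$, see~\eqref{eq:proof-of-main-theorem:twist-is-orthogonal}) that all quadratic twists $f_{\epsilon_D}$ are still orthogonal to $\rmE\rmE$, hence $L(f_{\epsilon_D},k/2)=0$ for every negative fundamental~$D$, and then invokes Waldspurger/Kohnen--Zagier to conclude that the associated half-integral weight newform vanishes. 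Your sketch would need the same access to twists, and that again comes from the vector-valued Hecke calculus you have deferred.
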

\begin{mainremarkenumerate}
\item
The range of $N$ and $N'$ can be bounded by means of Hecke theory. For example, consider the case that the representation $\rho$ corresponds to the subgroup $\Gamma_0(M) \subseteq \SL{2}(\ZZ)$ for some positive $M \in \ZZ$. That is consider the case that $\rho = \Ind_{\Gamma_0(M)}^{\SL{2}(\ZZ)}\, \bbone$ using notation of Section~\ref{ssec:preliminaries:induced-representations}. The Hecke algebra, viewed as a subalgebra of the endomorphism algebra of $\rmM_k(\Gamma_0(N))$, is finitely generated, say by (classical) Hecke operators $T_N$ with $N \le N_0$. Then it suffices to let $N$ and $N'$ in~\eqref{eq:product-of-eisenstein-series} run through integers between~$1$ and $N_0$. Precise estimates of how many Hecke operators are required to obtain the all of $\rmM_k(\rho)$ are not yet available, but planned for a sequel on computational aspects.

\item
By introducing holomorphic projections of products of almost holomorphic Eisenstein series, we could lower the bound on $k$ to $k \ge 4$ and the bounds on $l$ and $k-l$ to $l, k-l \ge 2$.  This also will be discussed in more detail in a sequel on computational aspects.  Derivatives of Eisenstein series, which are directly related to almost holomorphic modular forms, were already introduced into the subject at the end of Section~2 in~\cite{kohnen-zagier-1984}.

\item
It is currently not clear whether the extra space of Eisenstein series in~\eqref{eq:product-of-eisenstein-series} is needed. This can be directly related to a purely representation theoretic question. Using notation from Section~\ref{sec:eisenstein-series}, the key question is whether for arbitrary representations~$\rho$ with congruence subgroup kernel we have
\begin{gather*}
  V(\rho)(1)_T
=
  \lspan_{\substack{0 < N, N' \in \ZZ \\ \phi :\, \rho_{T_N} \otimes \rho_{T_{N'}} \ra \rho}}
  \phi\Big(\; T_N\, V(\bbone) \otimes T_{N'}\, V(\bbone) \;\Big)
\,\tx{.}
\end{gather*}
\end{mainremarkenumerate}

\subsubsection*{Hecke operators}

For simplicity, assume that $f$ is a level~$1$ modular form of weight~$k$. Classical Hecke operators can be written as a sum $N^{\frac{k}{2}-1} \sum_{m \in \Delta_N} f \big|_k\, m$, where~$\Delta_N$ is the set of all $\left(\begin{smallmatrix} a & b \\ 0 & d \end{smallmatrix}\right)$ with $ad = N$ and $0 \le b < d$.  Instead of summing over $\Delta_N$, we introduce vector valued modular forms, separating contributions of each $m \in \Delta_N$. The vector valued modular form $T_N\, f$ takes values in the $\CC$ vector space with basis $\Delta_N$. Its $m$\thdash\ component for $m \in \Delta_N$ is defined as~$f \big|_k\, m$. We show in Section~\ref{ssec:hecke-operators-on-modular-forms} that this is a vector valued modular form.  Vector valued Hecke operators can be applied to any vector valued modular form. We will defined them in two steps. Given a representation~$\rho$, we find another one~$T_M\,\rho$. Modular forms of type~$\rho$, we show, are mapped to modular forms of type~$T_M\,\rho$.

\paragraph{Compatibility with products}

A crucial property of vector valued Hecke operators, compared to classical ones, is that $T_N\, (f \otimes g)$ can be recovered from $(T_N\, f) \otimes (T_N\, g)$, while $fg \big| T_N \ne ( f \big| T_N ) ( g \big| T_N )$ in general for scalar valued modular forms---cf.~\cite{duke-1999} for a related and amusing topic. Explicitly, the sums $\sum_{m,m'} \big( f\big|_k\,m \big) \big( g\big|_l\,m' \big)$ and $\sum_m fg \big|_{k+l}\,m$ are not equal in any formal basis. Exceptional equalities can occur, but this is forced by dimensions. On the other hand, for a single~$m \in \Delta_N$, we have $\big( f \big|_k\, m \big) \big( g \big|_l\, m \big) = fg \big|_{k+l}\, m$. The tensor product of $T_N\,f$ and $T_N\,g$ has components $\big( f \big|_k\, m \big) \big( g \big|_l\,m' \big)$ for $m,m' \in \Delta_N$. Picking the components with $m = m'$ we obtain $T_N\, (fg)$.

\paragraph{Relation to classical Atkin-Lehner-Li theory}

Atkin-Lehner-Li theory is governed by classical Hecke operators and two additional families of operators. The Atkin Lehner involutions~$W_M$ for $M \isdiv N$, in the simplest case $M = N$, map a modular form~$f$ to a suitable scalar multiple of $\tau^{-k} f(-1\slash N\tau)$. The oldform operator, tentatively denoted by~${\rm sc}_M$ (notation is alluding to reSCaling) for a positive integer~$M$, maps $f$ to $f(M \tau)$. Both constructions can be phrased in terms of elements of $\Delta_M$. We will find a suitable $m \in \Delta_M$ such that $f \big|_k\,m = W_M(f)$, and another $m \in \Delta_M$ such that $f \big|_k\, m = {\rm sc}_M(f)$. In other words, classical Atkin-Lehner-Li theory is subsumed by vector valued Hecke operators.

We reformulate this observation: For simplicity let us fix a congruence subgroup $\Gamma = \Gamma_0(N) \subseteq \SL{2}(\ZZ)$. There is a representation $\rho_\Gamma$ so that $\rmM_k(\Gamma)$, the space of modular forms of weight~$k$ for the group~$\Gamma$, is canonically isomorphic to~$\rmM_k(\rho_\Gamma)$. This isomorphism, here and later, will be denoted by~$\Ind :\, \rmM_k(\Gamma) \ra \rmM_k(\rho_\Gamma)$. The Hecke operators and the Atkin-Lehner involutions are maps from~$\rmM_k(\Gamma)$ to itself. The oldform constructions yield maps from $\rmM_k(\Gamma')$ to $\rmM_k(\Gamma)$ where $\Gamma \subset \Gamma' \subseteq \SL{2}(\ZZ)$. In Proposition~\ref{prop:components-of-hecke-induced} and Remark~\ref{rm:components-of-hecke-induced:projections} of Section~\ref{ssec:hecke-operators-known-constructions}, we give explicit maps
\begin{gather*}
  \pi_{\rm Hecke} :\, T_M\, \rho_\Gamma \lra \rho_\Gamma
\tx{,}\quad
  \pi_{\rm AL} :\, T_M\, \rho_\Gamma \lra \rho_\Gamma
\tx{,}\quad
  \pi_{\rm old} :\, T_M\, \rho_\Gamma \lra \rho_\Gamma
\end{gather*}
such that
\begin{gather*}
  \Ind \circ (\big|_k\,T_M) = \pi_{\rm Hecke} \circ T_M \circ \Ind 
\tx{,}\quad
  \Ind \circ W_M = \pi_{\rm AL} \circ T_M \circ \Ind 
\tx{,}\;\;\tx{and}\quad
  \Ind \circ \alpha_M = \pi_{\rm old} \circ T_M \circ \Ind 
\tx{.}
\end{gather*}
The maps $\pi_{\rm Hecke}$, $\pi_{\rm AL}$, and $\pi_{\rm old}$ intertwine the induction map and the three families of operators.

\paragraph{Relation to other vector valued Hecke operators}

Weil representations are not in the scope of the present paper. Nonetheless, some words seem due: Vector valued modular forms for Weil representations and one certain kind of Hecke operators for them, denoted by~$\uparrow_H^A$ and $\downarrow_H^A$, were considered in~\cite{scheithauer-2011}. The same operators are foundational to the newform theory that Bruinier develops in~\cite{bruinier-2012} for Weil representation attached to cyclic discriminant modules. A vector valued Hecke operator $V_l$ was derived in~\cite{raum-2012c} by means of Jacobi forms. All these operators should relate to the Hecke operators in the present paper. The operator $\uparrow_H^A$ is analogous to the intertwining property described in Lemma~\ref{la:identity-in-hecke-operator-square}. The operator $\downarrow_H^A$ should be its adjoint. Finally, $V_l$ seems to be the same as $T_l$, after applying the theta decomposition and a suitable change of basis. We suggest to consider these claims in the context of a newform theory for vector valued modular forms for Weil representations---very much in the spirit of~\cite{bruinier-2012}.

Hecke operators also previously appeared in work by Bruinier and Stein~\cite{bruinier-stein-2010}. Note that in that paper half-integral weight modular forms occur, while we treat integral weight modular forms. One can extend our construction to modular forms for representations of the metaplectic double cover of~$\SL{2}(\ZZ)$, but the details remain to be worked out. The question in~\cite{bruinier-stein-2010} can be rephrased as whether for half-integral weights and Weil representations it is possible to find an analogue to the map~$\pi_{\rm Hecke}$ above. When put in this way, it is a purely representation theoretic question. It was established in~\cite{bruinier-stein-2010} that there is such a map when working with projective representations, but in general there is a non-trivial obstruction to lifting it to an actual representation.

\paragraph{Relation to automorphic representations}

We sketch a relation between vector valued modular forms and automorphic representation theory. Our description of their links, condensed to half a page, is doomed to be imprecise, since we now invoke the machinery of adelic automorphic representation theory. Nevertheless, it yields one fruitful way to think about vector valued Hecke operators. We work over~$\QQ$ so that at every finite place $v$ corresponds to a prime in the classical sense.

We start by relating vector valued modular forms and automorphic representations. Let us fix a newform~$f$, and consider the associated adelic automorphic representation~$\varpi_f = \bigotimes'_v \varpi_{f,v}$. At each finite place~$v$, we obtain a vector~$w_{f,v} \in \varpi_{f,v}$ that corresponds to~$f$. There is a subgroup~$K \subseteq \GL{2}(\ZZ_p)$ such that the space of $K$ fixed vectors in $\varpi_{f,v}$ is 1\nbd dimensional, spanned by~$w_{f,v}$.

Conjugating any such~$K$ by elements $\gamma$ of~$\GL{2}(\ZZ_p)$, we obtain other vectors $ \gamma w_{f,v} \in \varpi_{f,v}$ that are fixed by $\gamma K \gamma^{-1}$. Since $K \backslash \GL{2}(\ZZ_p)$ is a finite set, one can assemble all $\gamma w_{f,v}$ for $\gamma \in K \backslash \GL{2}(\ZZ_p)$ to a single vector of finite length. From this perspective, vector valued modular forms are a way to view all $K$ with $\dim\, \varpi_{f,v}^K = 1$ simultaneously.

For simplicity, let us assume that $\varpi_{f,v}$ is spherical, that is~$K = \GL{2}(\ZZ_p)$. The vector valued Hecke operator applied to $\Ind(f)$ for a newform~$f$ exposes further parts of $\varpi_{f,v}$. It is standard in context of strong approximation to view the previously defined set $\Delta_N$ for a $p$-power~$N$ as a subset of representatives of $\GL{2}(\ZZ_p) \backslash \GL{2}(\QQ_p)$. Define $K_N$ as the intersection of all $\gamma K \gamma^{-1}$ for $\gamma \in \Delta_N$. Then $T_N\, \Ind(f)$, at the place~$v$, is the same as the previously discussed vector of $K_N$ fixed vectors of $\varpi_{f,v}$. The dimension of $\varpi_{f,v}^{K_N}$ increases as $N$ growth, and this is the reason why $T_N\, \Ind(f)$ has increasingly many components.

From this perspective, it becomes immediately clear, why classical Atkin-Lehner-Li theory must be encoded in vector valued Hecke operators.

\subsubsection*{Period relations}

A proof of~\eqref{eq:introduction-kohnen-zagier} based on~\cite{kohnen-zagier-1984} relies on an explicit formulation of the Eichler Shimura isomorphism in terms of period polynomials.  In particular, linear relations between the $E_l E_{k-l}$ can be rephrased in terms of $L$-values.  As we will explain later, the proof of our main theorem rests on completely different observations.  It is possible to give a vanishing criterion for cusp forms as a corollary to our main theorem.  However, the resulting convolution $L$-series, given in Corollary~\ref{cor:strange-vanishing-condition}, can not be simplified in all cases. The reason is that the expansion of newforms at cusps not mapped to $\infty$ by Atkin-Lehner involutions are not necessarily newforms.  In other words, given a level~$N$ newform~$f$ and a cusp $\frakc$ for which there is no $M \isdiv N$ with $W_M\,\frakc = \infty$, then there is no a priori reason why the expansion of~$f$ at~$\frakc$ is even a Hecke eigenform; let alone a newform.

We refer the reader to the explicit vector valued Eichler Shimura theorem proved by Pa\c sol and Popa~\cite{pasol-popa-2013} in analogy with the level~$1$ case.  It would be interesting to relate our Main Theorem to a statement about periods and compare it to Corollary~\ref{cor:strange-vanishing-condition}. For the time being, we present the latter as curiosity, for which we neither have an application nor a satisfying explanation.

\subsection*{Computing modular forms and their cusp expansions}

Our Main Theorem allows us to span spaces of cusp forms by products of at most two Eisenstein series.  This provides an alternative to algorithms based on modular symbols~\cite{manin-1972,cremona-1997}.  In a sequel to this paper, we will report on how performance of these two methods compare.

Fix a modular form $f$ for a Dirichlet character~$\chi$ mod~$N$.  In the context of, for example, Borcherds products~\cite{borcherds-1998} it is an interesting question how Fourier expansions of~$f$ at cusps of $\Gamma_0(N) \backslash \HS$ can be computed.  If $N$ is square free then modular symbols provide us with a satisfactory theory, which is employed, e.g., in Sage~\cite{sage-6-3}.  Indeed, Atkin Lehner operators act on modular symbols, and in this case permute cusps transitively.

The case of non square free level~$N$ is more complicated. Atkin Lehner involutions fail to act transitively on cusps, and hence modular symbols are insufficient to obtain all cusp expansions. In~\cite{raum-2012c}, we found an algorithm to compute them, which however does not perform very well in practice. Its runtime is subexponential, but not polynomial with respect to the level.  Combining our Main Theorem with formulas for cusp expansions of Eisenstein series obtained in~\cite{weisinger-1977}, we obtain another promising approach to computing cusp expansions. It has the charm of relying only on multiplication of power series and computation of tensor products. The former is straightforward, and the latter can be quickly reduced to linear algebra over cyclotomic fields, which for example is implemented efficiently in the computer algebra system Magma.

\paragraph*{Runtime}
Let us sketch how to compare the runtime of the best known algorithms for computing Fourier expansions of modular forms~\cite{edixhoven-couveignes-2011} and an algorithm which builds on the present work. Writing $\rmM_\bullet(\chi)$ for the $\rmM_\bullet$\nbd module of modular forms for a Dirichlet character~$\chi$. To compute Fourier expansions for all elements of~$\rmM_\bullet(\chi)$ it suffices to determine those of a minimal set of generators. The dimension formula for modular forms implies that their weight is less than~$12$. We therefore consider computations of modular forms for a fixed weight~$k$. We bound the time required to compute all Fourier coefficients $c(n)$ for $n < n_0$ with $n_0 \ra \infty$ of all modular forms of level~$N < N_0$ with $N_0 \ra \infty$. This is justified at least with an eye to combinatorial applications---see~\cite{andrews-1974} and work following the ideas described there.

Let us first assume that we have uniquely identified a modular form. The algorithm by Couveignes and Edixhoven~\cite{edixhoven-couveignes-2011} essentially requires a piece of the Jacobian that corresponds to an irreducible Galois representation. Given such a representation, it computes the $n$\thdash\ Fourier coefficient of the attached newform in ${\rm TIME}(\log(n))$. We are suprressing issues with mod~$l$ Galois representations for the benefit of~\cite{edixhoven-couveignes-2011}. Repeating this for all $n < n_0$ yields~${\rm TIME}(n_0 \log(n_0))$. An algorithm based on the present paper would require for the computation of Fourier coefficients of a modular form an expression in terms of Eisenstein series and vector valued Hecke operators. Given such a representation the remaining computations are to evaluate Fourier coefficients of level~$1$ Eisenstein series, and to multiply Fourier expansions of length~$n_0$. Both yield~${\rm TIME}(n_0 \log(n_0))$. Summing up, in the aspect of computing all Fourier coefficients of a given modular form up to a given precision the two algorithms perform asymptotically equally fast.

We have to discuss how to obtain a Galois representation, and how to evaluate~\eqref{eq:product-of-eisenstein-series}. Galois representations can be computed using modular symbols~\cite{stein-2007}. Computing modular symbols for a Dirichlet character modulo~$N$ relies on computing the reduced echelon normal form of a size~$O(k N)$ matrix with entries in a cyclotomic field of order~$N$ (and degree~$\varphi(N)$). In the present consideration, we neglect utilization of degeneracy maps, whose impact on runtime to the author's knowledge has not been effectively estimated. Let $\omega \in \RR$ be the runtime exponent for computing reduced echelon normal forms. Then computing modular symbols for one fixed character yields ${\rm TIME}(k^\omega N^{2 \omega})$. Summing over all characters modulo~$N < N_0$ yields runtime~${\rm TIME}( k^\omega N_0^{2 \omega + 2} )$.

We pass to~\eqref{eq:product-of-eisenstein-series}. Using Sturm bounds and inspecting the proof of the main theorem it seems plausible to assume that it suffices to decompose $T_{N}\,\bbone \otimes T_{\sqrt{N}}\,\bbone$ to find an expression for level~$N$ modular forms for all divisors of~$N$---a precise analysis would be involved and is planned for the aforementioned sequel on practical aspects. Using the MeatAxe algorithm (cf.~Chapter 7.4 of~\cite{holt-eick-obrien-2005}), we can reduce computations to the echelon form of a matrix of size $O(N^{9\slash 4})$ with coefficients in an order~$N$ cyclotomic field. This yields a runtime estimate ${\rm TIME}( k + N_0^{9 \omega \slash 4 + 2} )$, where the additional term~$k$ stems from the contribution of the weight to Sturm bounds.

The FLINT: Fast Library for Number Theory uses the Strassen algorithm, yields $2.8 < \omega < 3$, while theoretical estimates show that $\omega < 2.4$ is possible. In any case, our implementation with the stated assumption would be slower by order~$\omega \slash 4$. One should Keep in mind that the decomposition of $T_{N}\,\bbone \otimes T_{\sqrt{N}}\,\bbone$ can probably be sped up using for example the well-known classification of representations of $\SL{2}(\QQ_p)$.

\subsection*{Method of proof}

We discuss the method of proof, comparing it to~\cite{kohnen-zagier-1984}.  In the classical setting, one fixes a cusp form~$f$ that is orthogonal to all $E_l E_{k-l}$. Relating this to vanishing of periods one concludes that the period polynomial of an attached cusp form $f'$, which in general is different from~$f$, vanishes.  This, in particular, makes use of an explicit Eichler Shimura correspondence.  In our case, we achieve to reduce ourselves to the case of newforms~$f$, such that the necessary vanishing statement is straightforward.

Reduction to newforms for $\Gamma_0(N) \subseteq \SL{2}(\ZZ)$ is performed by vector valued Hecke operators~$T_N$, introduced in Section~\ref{sec:hecke-operators}.  We show that $\lspan_\phi \phi\big(T_N\,\rmE_l(\bbone) \otimes T_{N'}\, \rmE_{k-l}(\bbone) \big)$ in~\eqref{eq:product-of-eisenstein-series} yields a Hecke module in the classical sense.  If an irreducible Hecke module which appears in $\rmM_k\big(\Gamma_0(N)\big)$, the space of modular forms for $\Gamma_0(N)$, is missed by our construction, then we fix a suitable modular form in there.  More specifically, we may choose a newform~$f$, and are hence reduced to this case.

In the course of the proof, we exploit the symmetry of \eqref{eq:product-of-eisenstein-series} to focus on the case $l \le k \slash 2$. This is necessary to conclude that the $L$-series $L(f, k - l)$ that appears in the proof is nonzero. The case $l = k \slash 2$ is special, since $L(f, k\slash 2)$ may indeed vanish. Note that this difficulty has already appeared in~\cite{borisov-gunnells-2001b}, and after the first preprint version of the present paper in~\cite{dickson-neururer-2015}. Vector valued Hecke operators allows us to circumvent it. We employ them to show that $L(f \times \chi, k\slash 2)$ vanishes for certain Dirichlet characters~$\chi$ and then make use of Waldspurger's  nonvanishing results.

\subsection*{Acknowledgment}
The author is grateful to one of the referees for extraordinarily extensive and very helpful comments, improving readability of this paper.

%%%%%%%%%%%%%%%%%%%%%%%%%%%%%%%%%%%%%%%%%%%%%%%%%%
%%% PAPER BODY

\section{Preliminaries}

\subsection{Classical modular forms}

As usual, write $\HS$ for the Poincar\'e upper half plane $\{\tau \in \CC \,:\, \Im\tau > 0 \}$.  It carries an action of $\SL{2}(\RR)$ by M\"obius transformations.  Writing $\gamma = \left(\begin{smallmatrix} a & b \\ c & d \end{smallmatrix}\right)$ for an element of $\SL{2}(\RR)$, we have $\gamma \tau = \frac{a \tau + b}{c \tau + d}$.  This action gives rise to a family $|_k$ of actions on holomorphic functions~$f$ on $\HS$, which is indexed by $k \in \ZZ$.  We set $(f\big|_k\,\gamma)(\tau) = (c \tau + d)^{-k}\, f(\gamma \tau)$.  For a character $\chi$ of $\Gamma \subseteq \SL{2}(\ZZ)$ and $\gamma \in \Gamma$, we set $(f\big|_{k, \chi}\,\gamma)(\tau) = (c \tau + d)^{-k}\chi(\gamma^{-1}) \, f(\gamma \tau)$.
\begin{definition}
Let $\Gamma \subseteq \SL{2}(\ZZ)$ be a finite index subgroup and $\chi$ a character of $\Gamma$.  We call a holomorphic function~$f :\, \HS \ra \CC$ a modular form of weight~$k$ for $\Gamma$ and $\chi$ if
\begin{enumerate}[label=(\roman*)]
\item
for all $\gamma \in \Gamma$, we have $f \big|_{k, \chi}\, \gamma = f$, and

\item
for every $\gamma \in \SL{2}(\ZZ)$, $\big(f \big|_k\, \gamma\big)(\tau)$ is bounded as $\tau \rightarrow i \infty$.
\end{enumerate}

\noindent
The space of such functions is denoted by $\rmM_k(\Gamma, \chi) = \rmM_k(\chi)$ and $\rmM_k(\Gamma)$, if $\chi$ is trivial.  The corresponding subspaces of cusp forms are denoted by $\rmS_k(\Gamma, \chi) = \rmS_k(\chi)$ and $\rmS_k(\Gamma)$.
\end{definition}

Denoting the entries of $\gamma \in \SL{2}(\ZZ)$ by $a(\gamma)$, $b(\gamma)$, $c(\gamma)$, and $d(\gamma)$, we define subgroups
\begin{align*}
  \Gamma_0(N)
&=
  \big\{ \gamma \in \SL{2}(\ZZ) \,:\, c(\gamma) \equiv 0 \pmod{N} \big\}
\tx{,}\\
  \Gamma_1(N)
&=
  \big\{ \gamma \in \SL{2}(\ZZ) \,:\, c(\gamma) \equiv 0,\, a(\gamma) \equiv d(\gamma) \equiv 1 \pmod{N} \big\}
\tx{,}\\
  \Gamma_1(N^2,N)
&=
  \big\{ \gamma \in \SL{2}(\ZZ) \,:\, c(\gamma) \equiv 0 \pmod{N^2},\, a(\gamma) \equiv d(\gamma) \equiv 1 \pmod{N} \big\}
\tx{,}\quad\tx{and}\\
  \Gamma(N)
&=
  \big\{ \gamma \in \SL{2}(\ZZ) \,:\, b(\gamma) \equiv c(\gamma) \equiv 0,\, a(\gamma) \equiv d(\gamma) \equiv 1 \pmod{N} \big\}
\tx{.}
\end{align*}
Dirichlet characters mod~$N$ give rise to characters of $\Gamma_0(N)$ by means of $\chi(\gamma) = \chi(d(\gamma))$.  The standard parabolic subgroup of $\SL{2}(\ZZ)$ is defined as $\Gamma_\infty = \big\{ \gamma \in \SL{2}(\ZZ) \,:\, c(\gamma) = 0 \big\}$.

\subsection{Vector valued modular forms}

A (complex) representation $\rho$ of $\Gamma \subset \SL{2}(\ZZ)$ is a group homomorphism from $\Gamma$ to $\GL{}\big( V(\rho) \big)$ for a complex vector space~$V(\rho)$, which is called the representation space of~$\rho$. We focus on finite dimensional representations, in which case the dual of $\rho$, denoted by~$\rho^\vee$, can be defined by $\rho^\vee(\gamma) = \rT \rho(\gamma)^{-1}$. Its representation space~$V(\rho^\vee)$ is the space of linear functionals~$V(\rho)^\vee$ on $V(\rho)$. The trivial representation of any group is denoted by~$\bbone$, and the group will be clear from the context. A $\Gamma$\nbd representation~$\rho$ is called irreducible if the only~$\Gamma$ invariant subspaces of $V(\rho)$ are $V(\rho)$ and $\{0\}$. Given a representation~$\rho$ and an arbitrary irreducible one~$\rho'$, we call $\rho(\rho') := V(\rho') \otimes \Hom(\rho', \rho) \subseteq V(\rho)$ the $\rho'$\nbd isotypical component of~$\rho$. It is the maximal subrepresentation of $\rho$ for which $\Hom(\rho'',\rho) \ne \{0\}$ implies that an irreducible $\rho''$ is isomorphic to $\rho'$. In particular, if $\rho'$ is not a subrepresentation of $\rho$, then $\rho(\rho') = \{0\}$. The isotrivial component of a representation $\rho$ is defined as $\rho(\bbone)$. Recall that in our setting all unitary representations are completely reducible as a direct sum, and to every inclusion $\rho \hra \sigma$ there is a corresponding projection $\sigma \thra \rho$ and vice versa.

For any $k \in \ZZ$ and any finite dimensional representation~$\rho$ of $\SL{2}(\ZZ)$, we set
\begin{gather*}
  \big( f\big|_{k, \rho}\, \gamma \big)(\tau)
=
  (c\tau + d)^{-k} \rho(\gamma^{-1}) f(\gamma \tau)
\text{.}
\end{gather*}

\begin{definition}
Fix a finite dimensional complex representation $\rho$ of $\SL{2}(\ZZ)$.  A holomorphic function $f :\, \HS \ra V(\rho)$, we say, is a modular form of weight~$k$ and type~$\rho$, if
\begin{enumerate}[label=(\roman*)]
\item
for all $\gamma \in \SL{2}(\ZZ)$, we have $f \big|_{k, \rho}\, \gamma = f$, and

\item
for every $v \in V(\rho)^\vee$, $\big( v \circ f \big)(\tau)$ is bounded as $\tau \rightarrow i \infty$.
\end{enumerate}

\noindent
The space of such functions is denoted by $\rmM_k(\rho)$.  A cusp forms is a modular form $f \in \rmM_k(\rho)$ satisfying $\big( v \circ f \big)(\tau) \ra 0$ as $\tau \rightarrow i \infty$ for all $v \in V(\rho)^\vee$. The space of cusp forms is denoted by $\rmS_k(\rho)$.
\end{definition}
\begin{remarkenumerate}
\item
We have defined vector valued modular forms for all finite dimensional, complex representations of $\SL{2}(\ZZ)$.  Our main interest, however, lies in modular forms for congruence subgroups.  This is why we will, from here on, restrict ourselves to representations with finite index kernel. Since such $\rho : \SL{2}(\ZZ) \ra \GL{}\big( V(\rho) \big)$ factor through a finite quotient $\SL{2}(\ZZ) \slash \ker\,\rho$, they are unitarizable. In particular, we can and will assume that $\rho$ is unitary with respect to a scalar product, say, $v,w \ra \langle v,\, w \rangle = \langle v,\, w \rangle_\rho$ on $V(\rho)$.

\item
Representations of $\SL{2}(\ZZ)$ whose kernel is a congruence subgroup, can be factored as a tensor product $\bigotimes_p \rho_p$ of representations whose level is a $p$\nbd power.  Such representations were studied in~\cite{silberger-1970}. In particular, a complete classification is available.
\end{remarkenumerate}

Given $v \in V(\rho)$, we write $\langle f,\, v \rangle$ for the function  $\tau \mapsto \langle f(\tau),\, v \rangle$, which is a modular form for $\ker\,\rho$.  For two functions $f :\, \HS \ra V(\rho)$ and $g :\, \HS \ra V(\sigma)$, we write $f \otimes g :\, \HS \ra V(\rho \otimes \sigma)$ for $(f \otimes g)(\tau) = f(\tau) \otimes g(\tau)$.  Morphisms of representations $\phi:\, \sigma \ra \rho$ give rise to maps $\phi:\, \rmM_k(\sigma) \ra \rmM_k(\rho)$ on modular forms, in an analogous way.
\begin{proposition}
\label{prop:decomposition-of-modular-forms-spaces}
If $\rho = \rho_1 \oplus \rho_2$, then there is a canonical isomorphism of $\rmM_k(\rho)$ and $\rmM_k(\rho_1) \oplus \rmM_k(\rho_2)$.
\end{proposition}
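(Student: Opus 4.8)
The plan is to build the isomorphism directly from the canonical inclusions $\iota_j :\, \rho_j \hra \rho$ and projections $\pi_j :\, \rho \thra \rho_j$ attached to the given decomposition $\rho = \rho_1 \oplus \rho_2$, exploiting that morphisms of representations induce maps on spaces of modular forms (recorded just before Proposition~\ref{prop:decomposition-of-modular-forms-spaces}). Concretely, I would define $\Phi :\, \rmM_k(\rho) \ra \rmM_k(\rho_1) \oplus \rmM_k(\rho_2)$ by $\Phi(f) = (\pi_1 \circ f,\, \pi_2 \circ f)$ and $\Psi :\, \rmM_k(\rho_1) \oplus \rmM_k(\rho_2) \ra \rmM_k(\rho)$ by $\Psi(f_1, f_2) = \iota_1 \circ f_1 + \iota_2 \circ f_2$. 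Since each $\pi_j$ and $\iota_j$ is a homomorphism of $\SL{2}(\ZZ)$\nbd representations, $\Phi$ and $\Psi$ land in the asserted spaces by that functoriality. Both maps are manifestly linear, and since $\iota_j$ and $\pi_j$ depend only on the data of the decomposition (not on any auxiliary basis or bilinear form), the resulting isomorphism is canonical.

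Next I would check that $\Phi$ and $\Psi$ are mutually inverse. This is immediate from the pointwise-in-$\tau$ identities $\pi_i \circ \iota_j = \delta_{ij}\,\id$ and $\iota_1 \circ \pi_1 + \iota_2 \circ \pi_2 = \id_{V(\rho)}$. The only point that is not purely formal is to verify that nothing is lost when moving between the transformation law and the growth condition of the two definitions. The transformation law is automatic: $\rho(\gamma) = \rho_1(\gamma) \oplus \rho_2(\gamma)$ acts block-diagonally on $V(\rho) = V(\rho_1) \oplus V(\rho_2)$, so $f\big|_{k,\rho}\,\gamma = f$ holds if and only if $(\pi_j \circ f)\big|_{k,\rho_j}\,\gamma = \pi_j \circ f$ for $j = 1, 2$, and symmetrically for $\Psi$.

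For the growth condition one uses $V(\rho)^\vee = V(\rho_1)^\vee \oplus V(\rho_2)^\vee$: given $w \in V(\rho_j)^\vee$, extending it by zero on the complementary summand yields $v \in V(\rho)^\vee$ with $v \circ f = w \circ (\pi_j \circ f)$, which handles one direction; conversely any $v \in V(\rho)^\vee$ splits as $v = v_1 + v_2$ with $v_j \in V(\rho_j)^\vee$, and $v \circ (\iota_1 f_1 + \iota_2 f_2) = v_1 \circ f_1 + v_2 \circ f_2$ is a sum of bounded functions. The same bookkeeping, with ``bounded as $\tau \to i\infty$'' replaced by ``tending to $0$'', shows $\Phi$ and $\Psi$ restrict to mutually inverse isomorphisms $\rmS_k(\rho) \cong \rmS_k(\rho_1) \oplus \rmS_k(\rho_2)$. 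There is no genuine obstacle here; the statement amounts to the observation that all conditions in the definition of $\rmM_k(\rho)$ are preserved under finite direct sums, and the mild bookkeeping with dual spaces for condition~(ii) is the only ingredient that requires any care.
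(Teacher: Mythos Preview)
Your proof is correct and follows exactly the same approach as the paper: build the isomorphism from the canonical inclusions $\iota_j$ and projections $\pi_j$ coming with the decomposition, and observe that these are mutually inverse. You have simply spelled out more of the verification (transformation law, growth condition via the splitting of $V(\rho)^\vee$, and the cusp form case) than the paper, which dismisses these as ``easily verified.''
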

\begin{proof}
Let $\iota_1 :\, \rho_1 \hra \rho$ and $\iota_2 :\, \rho_2 \hra \rho$ be the inclusions associated with the given decomposition of~$\rho$.  For $f_1 \in \rmM_k(\rho_1)$ and $f_2 \in \rmM_k(\rho_2)$, we have $(\iota_1 \circ f_1) + (\iota_2 \circ f_2) \in \rmM_k(\rho)$ as is easily verified.  The inverse is given by the corresponding projections
$\pi_1 :\, \rho \thra \rho_1$ and $\pi_2 :\, \rho \thra \rho_2$.
\end{proof}

We proceed to the definition of Peterssson scalar products.  Let $\rho$ and $\sigma$ be two representations of $\SL{2}(\ZZ)$ and fix an embedding $\iota :\, \bbone \hra \rho \otimes \ov{\sigma}$, where $\ov{\sigma}$ is the complex conjugate of~$\sigma$. Since we are in the unitary setting, there is an identification of~$\ov{\sigma} = \sigma^\vee$ be means of the scalar product on~$V(\sigma)$. In case that $\rho = \sigma$, there is a canonical choice given by $\iota(1) = \sum_v v \otimes \ov{v}$, where $v$ runs through an orthonormal basis of $V(\rho)$. In general, we have $\Hom(\bbone, \rho \otimes \sigma^\vee) \cong \Hom(\sigma, \rho)$. For $f \in \rmS_k(\rho)$ and $g \in \rmS_k(\sigma)$, we define
\begin{gather}
\label{eq:def:scalar-product}
  \langle f, g \rangle_\iota
=
  \int_{\SL{2}(\ZZ) \backslash \HS}
  \big\langle f \otimes \ov{g},\, \iota(1) \big\rangle
  \; \frac{d\!x\,d\!y}{y^{2 - k}}
\,\text{.}
\end{gather}
As usually, we can extend the Petersson scalar product to the case $f \in \rmM_k(\rho)$ by applying Borcherds regularization, explained at the beginning of Section~6 of~\cite{borcherds-1998}. Cusp forms are, as seen when unfolding, orthogonal to Eisenstein series, defined in Section~\ref{sec:eisenstein-series}. Details on the unfolding of regularized integrals can be found in~\cite{bruinier-2002a}, on page~47ff.

\subsection{Induced representations}
\label{ssec:preliminaries:induced-representations}

Recall that we focus on finite dimensional representations whose kernel has finite index in $\SL{2}(\ZZ)$. The induced representation $\Ind_\Gamma^{\Gamma'}\, \rho$ attached to $\Gamma \subseteq \Gamma' \subseteq \SL{2}(\ZZ)$ and a representation $\rho$ of $\Gamma$ can thus be defined by
\begin{gather}
\label{eq:def:induction-of-representations}
  V\big( \Ind_\Gamma^{\Gamma'} \rho \big)
:=
  V(\rho) \otimes \CC[B]
\quad\tx{and}\quad
  \Ind\,\rho (\gamma') \big( v \otimes \frake_\beta \big)
=
  \big( \rho\big( I_\beta^{-1}(\gamma^{\prime\, -1}) \big) v \big) \frake_{\beta \gamma^{\prime\,-1}}
\end{gather}
for a fixed system~$B$ of representatives of $\Gamma \backslash \Gamma'$ containing the identity element, and the following cocycle~$I$. For $\beta \in B$ and $\gamma' \in \Gamma'$, we set $\beta \gamma' = I_\beta(\gamma') \ov{\beta \gamma'}$ where $\ov{\beta \gamma'} \in B$ and $I_\beta(\gamma') \in \Gamma$.  This indeed defines a cocycle, that is, we have $I_\beta(\gamma'_1 \gamma'_2) = I_\beta(\gamma'_1) \, I_{\beta \gamma'_1}(\gamma'_2)$. To ease notation, we extend it to all of $\Gamma'$ by setting $I_\beta = I_{\ov\beta}$. We write $\frake_\beta$, $\beta \in B$ for the canonical orthonormal basis of $\CC[B]$.

We define a map $\Ind$ on modular forms.  For any $k \in \ZZ$ and any Dirichlet character~$\chi$ mod~$N$, we set
\begin{gather}
\label{eq:def:induction-on-modular-forms}
  \Ind
:\,
  \rmM_k(\Gamma_0(N), \chi)
\lra
  \rmM_k\big( \Ind_{\Gamma_0(N)}\,\chi \big)
\text{,}\quad
  f
\lmto
  \sum_{\gamma \in \Gamma_0(N) \backslash \SL{2}(\ZZ)}
  \big(f|_{k}\, \gamma\big)\, \frake_\gamma
\,\text{,}
\end{gather}
where $\gamma$ runs trough a fixed system of representatives of $\Gamma_0(N) \backslash \SL{2}(\ZZ)$. The group and character of a function will always be fixed separately, so that we throughout write $\Ind(f)$ without referring to the group or character attached to~$f$.
\begin{proposition}
The map $\Ind$ in~\eqref{eq:def:induction-on-modular-forms} is an isomorphism.
\end{proposition}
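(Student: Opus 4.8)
Set $\sigma := \Ind_{\Gamma_0(N)}\chi$, let $B \subseteq \SL{2}(\ZZ)$ be the fixed system of representatives of $\Gamma_0(N)\backslash\SL{2}(\ZZ)$ with $1 \in B$, and write $g_\beta := \langle g,\, \frake_\beta\rangle$ for the components of $g \in \rmM_k(\sigma)$. The plan is to check that $\Ind$ really maps into $\rmM_k(\sigma)$, and then to exhibit a two-sided inverse, namely the map $g \mapsto g_1$ extracting the component at the identity coset. Granting both are well defined, $\CC$-linearity is clear; that $g_1$ reproduces $f$ when $g=\Ind(f)$ is immediate since $1\in B$ (the component of $\Ind(f)$ at $1$ is $f\big|_k 1 = f$), and that $\Ind(g_1)=g$ follows from the identity $g_\gamma = g_1\big|_k\gamma$ established below. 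So the whole statement reduces to two short bookkeeping computations with the cocycle~$I$.

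\textbf{Well-definedness of $\Ind$.} Fix $f \in \rmM_k(\Gamma_0(N),\chi)$ and put $g = \Ind(f)$, so $g_\gamma = f\big|_k\gamma$. Each $g_\gamma$ is holomorphic, hence $g$ is; and boundedness of each $g_\gamma = f\big|_k\gamma$ as $\tau\to i\infty$ is exactly condition~(ii) for $f$, which gives condition~(ii) for $g$ since $V(\sigma)^\vee$ is spanned by the $\frake_\gamma^\vee$. For the transformation law, compute for $\delta\in\SL{2}(\ZZ)$ the $\beta$-component of $g\big|_{k,\sigma}\delta$ straight from~\eqref{eq:def:induction-of-representations}: picking out $\frake_\beta$ forces the index $\ov{\beta\delta^{-1}}$, and one gets $\chi\big(I_{\ov{\beta\delta^{-1}}}(\delta)^{-1}\big)\,\big(g_{\ov{\beta\delta^{-1}}}\big|_k\delta\big) = \chi\big(I_{\ov{\beta\delta^{-1}}}(\delta)^{-1}\big)\,\big(f\big|_k(\ov{\beta\delta^{-1}}\,\delta)\big)$. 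From $\beta\delta^{-1} = I_\beta(\delta^{-1})\,\ov{\beta\delta^{-1}}$ we read off $\ov{\beta\delta^{-1}}\,\delta = I_\beta(\delta^{-1})^{-1}\beta$ with $I_\beta(\delta^{-1})^{-1}\in\Gamma_0(N)$, so the $\chi$-equivariance $f\big|_k\eta = \chi(\eta)f$ for $\eta\in\Gamma_0(N)$ turns this into $\chi\big(I_{\ov{\beta\delta^{-1}}}(\delta)^{-1}\big)\,\chi\big(I_\beta(\delta^{-1})^{-1}\big)\,(f\big|_k\beta)$. Finally the cocycle relation applied to $\delta^{-1}\delta = 1$ gives $I_\beta(\delta^{-1})\,I_{\ov{\beta\delta^{-1}}}(\delta) = 1$, i.e.\ $I_{\ov{\beta\delta^{-1}}}(\delta) = I_\beta(\delta^{-1})^{-1}$, so the two $\chi$-factors cancel and the $\beta$-component of $g\big|_{k,\sigma}\delta$ is $f\big|_k\beta = g_\beta$. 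Hence $g\big|_{k,\sigma}\delta = g$ and $g\in\rmM_k(\sigma)$.

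\textbf{The inverse.} For an \emph{arbitrary} $g\in\rmM_k(\sigma)$ the same computation of the $\beta$-component of $g\big|_{k,\sigma}\delta$, now equated with $g_\beta$, yields
$g_\beta = \chi\big(I_{\ov{\beta\delta^{-1}}}(\delta)^{-1}\big)\,\big(g_{\ov{\beta\delta^{-1}}}\big|_k\delta\big)$ for all $\beta\in B$, $\delta\in\SL{2}(\ZZ)$. Taking $\beta = 1$ and $\delta = \eta\in\Gamma_0(N)$, where $\ov{1\cdot\eta^{-1}} = 1$ and $I_1(\eta) = \eta$, this reads $g_1\big|_{k,\chi}\eta = g_1$, which is condition~(i) for $g_1$. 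Taking $\beta = 1$ and $\delta = \gamma^{-1}$ for $\gamma\in B$, where $\ov{1\cdot\gamma} = \gamma$ and $I_\gamma(\gamma^{-1}) = 1$, it reads $g_1 = g_\gamma\big|_k\gamma^{-1}$, i.e.\ $g_\gamma = g_1\big|_k\gamma$; boundedness of $g_1$ at the cusps then follows, since $g_1\big|_k\delta$ for arbitrary $\delta = \eta\gamma$ ($\eta\in\Gamma_0(N)$, $\gamma\in B$) equals $\chi(\eta)g_\gamma$, and all the $g_\gamma$ are bounded at $i\infty$ by condition~(ii) for $g$. Thus $g_1\in\rmM_k(\Gamma_0(N),\chi)$, and $\Ind(g_1) = \sum_\gamma (g_1\big|_k\gamma)\,\frake_\gamma = \sum_\gamma g_\gamma\,\frake_\gamma = g$, while as noted the component of $\Ind(f)$ at $1$ is $f$. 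So the two maps are mutually inverse linear maps, and $\Ind$ is an isomorphism.

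The only point requiring care is the bookkeeping of coset representatives and the cocycle conventions; the single identity that does the real work, in both halves of the argument, is $I_{\ov{\beta\delta^{-1}}}(\delta) = I_\beta(\delta^{-1})^{-1}$, a special case of the cocycle relation. As a sanity check, for $N=1$ one has $B=\{1\}$, $\sigma = \bbone$, and $\Ind$ is literally the identity map.
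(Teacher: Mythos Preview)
Your proof is correct and follows exactly the same approach as the paper: exhibit the component-at-identity map $g \mapsto \langle g,\frake_{I_2}\rangle$ as a two-sided inverse to $\Ind$. The paper's proof is two lines, simply asserting injectivity and naming the inverse, whereas you carefully verify well-definedness on both sides and the cocycle identity $I_{\ov{\beta\delta^{-1}}}(\delta) = I_\beta(\delta^{-1})^{-1}$ that makes it work; your version could serve as a fleshed-out replacement for the paper's sketch.
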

\begin{proof}
It is clear that~$\Ind$ is injective.  Its inverse is given by $f \mto \langle f,\, \frake_{I_2} \rangle$ for $f \in \rmM_k(\Ind_{\Gamma_0(N)}\, \chi)$, where $I_2$ is the $2 \times 2$ identity matrix.
\end{proof}

\begin{proposition}
\label{prop:classical-scalar-product}
For classical modular forms $f \in \rmM_k(\chi)$ and $g \in \rmS_k(\chi)$ with $\chi$ a Dirichlet character modulo~$N$ and the associated vector valued forms $\Ind\,f, \Ind\,g \in \rmM_k(\Ind_{\Gamma_0(N)}\chi)$, we have
\begin{gather*}
  \langle f, g \rangle
=
  \langle \Ind\,f,\, \Ind\,g \rangle_\iota
\text{,}\quad\text{with}\quad
  \iota(1)
=
  \frac{1}{[\SL{2}(\ZZ) : \Gamma_0(N)]}
  \sum_{\gamma \in \Gamma_0(N) \backslash \SL{2}(\ZZ)} \frake_\gamma \otimes \frake_\gamma
\,\text{,}
\end{gather*}
where $[\SL{2}(\ZZ) : \Gamma_0(N)]$ is the index of $\Gamma_0(N)$ in $\SL{2}(\ZZ)$.
\end{proposition}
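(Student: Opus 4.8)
The plan is to unfold both sides of the claimed identity and reduce to the classical unfolding that relates an integral over $\SL{2}(\ZZ)\backslash\HS$, summed over cosets of $\Gamma_0(N)$, to an integral over $\Gamma_0(N)\backslash\HS$. Write $n = [\SL{2}(\ZZ):\Gamma_0(N)]$ and keep the fixed system of representatives~$\gamma$ of $\Gamma_0(N)\backslash\SL{2}(\ZZ)$ used in~\eqref{eq:def:induction-on-modular-forms}. First I would note that $\iota(1) = \frac{1}{n}\sum_\gamma \frake_\gamma\otimes\frake_\gamma$ really is an embedding $\bbone \hra \Ind_{\Gamma_0(N)}\chi \otimes \overline{\Ind_{\Gamma_0(N)}\chi}$: invariance of $\sum_\gamma \frake_\gamma\otimes\frake_\gamma$ follows at once from the explicit cocycle description~\eqref{eq:def:induction-of-representations} together with $|\chi|\equiv 1$, and the scalar $\frac{1}{n}$ is precisely the normalization that will reproduce the classical product. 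Then I would substitute $\Ind f = \sum_\gamma (f|_k\gamma)\,\frake_\gamma$, and the analogous expansion of $\overline{\Ind g}$, into the integrand of~\eqref{eq:def:scalar-product}. Because $\{\frake_\gamma\}$ is the orthonormal basis of $\CC[\Gamma_0(N)\backslash\SL{2}(\ZZ)]$ and $\iota(1)$ is supported on the diagonal, pairing against $\iota(1)$ annihilates all cross terms and leaves
\[
  \big\langle \Ind f\otimes\overline{\Ind g},\, \iota(1)\big\rangle
=
  \frac{1}{n}\sum_{\gamma\in\Gamma_0(N)\backslash\SL{2}(\ZZ)} (f|_k\gamma)\,\overline{(g|_k\gamma)}
\,\text{.}
\]

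Next I would rewrite each summand in geometric terms. Put $h(\tau) = f(\tau)\,\overline{g(\tau)}\,y^k$ and $d\mu = \frac{dx\,dy}{y^2}$, so that $\frac{dx\,dy}{y^{2-k}} = y^k\,d\mu$. Using $\Im(\gamma\tau) = y\,|c(\gamma)\tau + d(\gamma)|^{-2}$ one checks directly that $(f|_k\gamma)(\tau)\,\overline{(g|_k\gamma)(\tau)}\,y^k = h(\gamma\tau)$, and since $f$ and $g$ carry the same character $\chi$ of absolute value~$1$, the function $h$ is $\Gamma_0(N)$-invariant; in particular $h\circ\gamma$ depends only on the coset $\Gamma_0(N)\gamma$. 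Combining this with the previous display gives
\[
  \langle \Ind f,\, \Ind g\rangle_\iota
=
  \frac{1}{n}\sum_{\gamma\in\Gamma_0(N)\backslash\SL{2}(\ZZ)} \int_{\SL{2}(\ZZ)\backslash\HS} h(\gamma\tau)\, d\mu(\tau)
\,\text{.}
\]

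The remaining input is the unfolding identity $\sum_\gamma \int_{\SL{2}(\ZZ)\backslash\HS} h(\gamma\tau)\,d\mu(\tau) = \int_{\Gamma_0(N)\backslash\HS} h(\tau)\,d\mu(\tau)$, valid for any $\Gamma_0(N)$-invariant integrable $h$: if $\mathcal{F}$ is a fundamental domain for $\SL{2}(\ZZ)$, then $\bigcup_\gamma \gamma\mathcal{F}$ is one for $\Gamma_0(N)$ (standard, using $-I\in\Gamma_0(N)$ to separate translates), and the identity drops out of the change of variables $\tau\mapsto\gamma\tau$ on each piece together with $\SL{2}(\RR)$-invariance of $d\mu$. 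Stringing the three displays together yields $\langle\Ind f,\Ind g\rangle_\iota = \frac{1}{n}\int_{\Gamma_0(N)\backslash\HS} f\,\overline{g}\, y^k\,d\mu$, which is exactly the level-normalized classical Petersson product $\langle f,g\rangle$ — the normalization under which $\langle f,g\rangle$ is independent of the subgroup in which $f$ and $g$ are regarded as modular forms.

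I do not expect a serious obstacle: the argument is bookkeeping. The two places that need attention are (i) keeping the representation-theoretic pairing and the complex conjugations straight when identifying $\overline{\Ind_{\Gamma_0(N)}\chi}$ with its dual, so that $\iota(1)$ is genuinely an invariant vector carrying the stated scalar, and (ii) the well-definedness of $h\circ\gamma$ on $\Gamma_0(N)\backslash\SL{2}(\ZZ)$ in the unfolding step, which is exactly where the hypothesis that $f$ and $g$ share the character $\chi$ enters.
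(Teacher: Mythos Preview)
Your proposal is correct and follows essentially the same route as the paper's proof: substitute the definition of $\Ind f$ and $\Ind g$ into the Petersson integral~\eqref{eq:def:scalar-product}, use orthonormality of the $\frake_\gamma$ against the diagonal element $\iota(1)$ to reduce to $\frac{1}{n}\sum_\gamma (f|_k\gamma)\overline{(g|_k\gamma)}$, and then unfold the integral over $\SL{2}(\ZZ)\backslash\HS$ to one over $\Gamma_0(N)\backslash\HS$. The paper compresses all of this into two displayed equalities, whereas you spell out the invariance of $\iota(1)$, the identity $(f|_k\gamma)\overline{(g|_k\gamma)}\,y^k = h(\gamma\tau)$, and the fundamental-domain argument behind the unfolding; none of this is extra, just more explicit than the paper chose to be.
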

\begin{proof}
Employing the definition of $\langle \Ind\,f,\, \Ind\,g \rangle_\iota$, we obtain
\begin{multline*}
  \int_{\SL{2}(\ZZ) \backslash \HS}
  \Big\langle
  \Ind\,f \otimes \Ind\,\ov{g},\,
  \frac{\sum_\gamma \frake_\gamma \otimes \frake_\gamma}{[\SL{2}(\ZZ) : \Gamma_0(N)]}
  \Big\rangle\;
  \frac{d\!x d\!y}{y^{2-k}}
\\
=
  I_N^{-1}
  \int_{\SL{2}(\ZZ) \backslash \HS}
  \sum_\gamma
  \big( f \big|_k\, \gamma \big) \ov{\big( g \big|_k\, \gamma \big)}\;
  \frac{d\!x d\!y}{y^{2-k}}
=
  I_N^{-1}
  \int_{\Gamma_0(N) \backslash \HS} f \ov{g}\;
  \frac{d\!x d\!y}{y^{2-k}}
\tx{,}
\end{multline*}
where $I_N = [\SL{2}(\ZZ) : \Gamma_0(N)]$.
\end{proof}

\section{Vector Valued Hecke Operators}
\label{sec:hecke-operators}

The definition of vector valued Hecke operators involves two steps.  In Section~\ref{ssec:hecke-operators-on-representations}, we study Hecke operators on representations, and Section~\ref{ssec:hecke-operators-on-modular-forms} contains a treatment of Hecke operators on modular forms. We show in Section~\ref{ssec:hecke-operators-known-constructions} that classical Hecke operators for modular forms attached to $\Gamma_0(N)$ and a Dirichlet character $\chi$ can be reconstructed from the new definition.

\subsection{Hecke operators on representations}
\label{ssec:hecke-operators-on-representations}

For a positive integer~$M$, let $\Delta_M = \big\{ \left(\begin{smallmatrix} a & b \\ 0 & d \end{smallmatrix}\right) \,:\, ad = M, 0 \le b < d \big\}$ be a set of upper triangular matrices of determinant~$M$ that are inequivalent with respect to the action of $\SL{2}(\ZZ)$ from the left. Given any $2 \times 2$ matrix $m$ with integer coefficients and determinant~$M$, there is $\gamma' \in \SL{2}(\ZZ)$ such that $\gamma' \ov{m} = m$ for some matrix $\ov{m} \in \Delta_M$. We will use this overline-notation throughout.

We have a right action of $\SL{2}(\ZZ)$ on $\Delta_M$ defined by $(m, \gamma) \mto \ov{m \gamma}$ with $\gamma' \ov{m \gamma} = m \gamma$ for some $\gamma' \in \SL{2}(\ZZ)$. It is readily verified that the associated map $I_m(\gamma) = \gamma'$ is a cocyle, satisfying $I_{m}(\gamma_1 \gamma_2) = I_m(\gamma_1) I_{m \gamma_1}(\gamma_2)$.  Note that the subscript $m$ makes it impossible to confuse $I_m$ and $I_\gamma$ defined in Section~\ref{ssec:preliminaries:induced-representations}.

The cocycle property of $I_m(\gamma)$ guaranties that, given a representation~$\rho$, then $T_M\,\rho$ which is defined by
\begin{gather}
\label{eq:def-hecke-operator-on-reps}
  V(T_M\, \rho)
:=
  V(\rho) \otimes \CC\big[ \Delta_M \big]
\quad\text{and}\quad
  (T_M\, \rho)(\gamma)\, (v \otimes \frake_{m})
:=
  \rho\big( I_m^{-1}(\gamma^{-1}) \big) (v) \otimes \frake_{m \gamma^{-1}}
\end{gather}
is a also representation.  Indeed, we have
\begin{align*}
&
  (T_M\,\rho)(\gamma_1) \big( (T_M\,\rho) (\gamma_2) (v \otimes \frake_m) \big)
=
  (T_M\,\rho)(\gamma_1) \big( \rho\big( I_m^{-1}(\gamma_2^{-1}) \big) v \otimes \frake_{m \gamma_2^{-1}} \big)
\\
={}&
  \rho\big( I^{-1}_{m \gamma_2^{-1}}(\gamma_1^{-1}) \big)\,
    \rho\big( I^{-1}_m(\gamma_2^{-1}) \big) v
  \otimes \frake_{m \gamma_2^{-1} \gamma_1^{-1}}
=
  \rho\big( I_m(\gamma_2^{-1}) I_{m\gamma_2^{-1}}(\gamma_1^{-1}) \big)^{-1} v
  \otimes \frake_{m (\gamma_1 \gamma_2)^{-1}}
\\
={}&
  \rho\big( I_m^{-1}\big((\gamma_1 \gamma_2)^{-1} \big) \big) v
  \otimes \frake_{m (\gamma_1 \gamma_2)^{-1}}
=
  (T_M\,\rho)(\gamma_1 \gamma_2) (v \otimes \frake_m)
\,\text{.}
\end{align*}

\begin{definition}
\label{def:hecke-operator}
Let $M$ be a positive integer. Then the $M$-th vector valued Hecke operator on representations is the assigment
\begin{gather*}
  T_M \,:\, \rho \lmto T_M\, \rho
\txt{,}
\end{gather*}
where $T_M\, \rho$ is the representation defined in~\eqref{eq:def-hecke-operator-on-reps}.
\end{definition}
\begin{remark}
\label{rm:hecke-operator}
% \label{it:rm:hecke-operator:action-of-inverse}
For the purpose of reference, we note that $(T_M\,\rho)(\gamma^{-1})$ and $(T_M\,\rho)^\vee(\gamma)$ act by
\begin{gather*}
  (T_M\, \rho)(\gamma^{-1})\, (v \otimes \frake_{m})
:=
  \rho(I_m^{-1}(\gamma)) (v) \otimes \frake_{m \gamma}
\,\text{,}
\qquad
  (T_M\, \rho)^\vee(\gamma)\, (v \otimes \frake_{m})
:=
  \rho^\vee(I_m(\gamma)) (v) \otimes \frake_{m \gamma}
\,\text{.}
\end{gather*}
\end{remark}

The next two technical lemmas will be used frequently without further mentioning them.
\begin{lemma}
If $\rho$ has finite index kernel, then $T_M\,\rho$ has finite kernel, too.  If $\ker\,\rho$ is a congruence subgroup, then so is $\ker\,(T_M\, \rho)$.
\end{lemma}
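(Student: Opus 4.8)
The plan is to pin down $\ker(T_M\,\rho)$ as an explicit finite intersection of subgroups, each visibly controlled by $\ker\rho$. First I would separate off the ``permutation part'' of the action in~\eqref{eq:def-hecke-operator-on-reps}: let $\Gamma_1 \subseteq \SL{2}(\ZZ)$ be the kernel of the right action of $\SL{2}(\ZZ)$ on the \emph{finite} set $\Delta_M$ from Section~\ref{ssec:hecke-operators-on-representations}, which is a finite index subgroup. Since $(T_M\,\rho)(\gamma)$ maps the summand $V(\rho) \otimes \frake_m$ into $V(\rho) \otimes \frake_{m\gamma^{-1}}$, any $\gamma \in \ker(T_M\,\rho)$ must fix every element of $\Delta_M$, so $\ker(T_M\,\rho) \subseteq \Gamma_1$; conversely, for $\gamma \in \Gamma_1$ all summands are preserved and the cocycle collapses to $I_m(\gamma) = m\gamma m^{-1}$, which lies in $\SL{2}(\ZZ)$ by construction. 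Writing $c_m :\, \Gamma_1 \to \SL{2}(\ZZ)$ for the injective homomorphism $\gamma \mapsto m\gamma m^{-1}$, this shows that $T_M\,\rho$ restricted to $\Gamma_1$ is $\bigoplus_{m \in \Delta_M} \rho \circ c_m$, and therefore
\begin{gather*}
  \ker(T_M\,\rho) = \bigcap_{m \in \Delta_M} c_m^{-1}(\ker\rho)
  \,\text{,}
\end{gather*}
a finite intersection.

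From this description both assertions would follow quickly. For the first, each $c_m$ has image inside $\SL{2}(\ZZ)$, so $[\Gamma_1 : c_m^{-1}(\ker\rho)] = [c_m(\Gamma_1) : c_m(\Gamma_1) \cap \ker\rho] \le [\SL{2}(\ZZ) : \ker\rho] < \infty$; since $[\SL{2}(\ZZ) : \Gamma_1] < \infty$ as well and the intersection over $m \in \Delta_M$ is finite, $\ker(T_M\,\rho)$ has finite index. For the second, suppose $\Gamma(L) \subseteq \ker\rho$. Using the adjugate identity $m\,\mathrm{adj}(m) = M I_2$, for $\gamma = I_2 + LM\delta \in \Gamma(LM)$ one would compute $c_m(\gamma) = I_2 + L\,m\delta\,\mathrm{adj}(m) \in \Gamma(L)$; hence $\Gamma(LM) \subseteq c_m^{-1}(\Gamma(L)) \subseteq c_m^{-1}(\ker\rho)$ for each $m$, so $\Gamma(LM) \subseteq \ker(T_M\,\rho)$ and the latter is a congruence subgroup. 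The same identity applied to $\Gamma(M)$ also gives $\Gamma(M) \subseteq \Gamma_1$, which one may record as an aside.

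I expect the main step to be the identification of $\ker(T_M\,\rho)$ in the first paragraph, rather than either of the two finiteness conclusions. The delicate point there is to verify that $I_m$ restricted to $\Gamma_1$ is genuinely $\gamma \mapsto m\gamma m^{-1}$ — so that the conjugate subgroups $m^{-1}(\ker\rho)\,m$ appearing in the intersection really land in $\SL{2}(\ZZ)$ after intersecting with $\Gamma_1$ — and to keep the inverses in~\eqref{eq:def-hecke-operator-on-reps} straight. The denominators hidden in $m^{-1}$ cause no trouble precisely because of the adjugate identity $m\,\mathrm{adj}(m) = M I_2$, which is the single computational input used throughout.
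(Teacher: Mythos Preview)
Your argument is correct and follows essentially the same route as the paper: separate off the permutation action on $\Delta_M$, observe that on its kernel the cocycle $I_m$ becomes conjugation by $m$, and then use the adjugate identity to get $\Gamma(LM) \subseteq \ker(T_M\,\rho)$ from $\Gamma(L) \subseteq \ker\rho$. Your version is slightly more precise (you work with the full permutation kernel $\Gamma_1$ rather than the smaller $\Gamma(M)$, obtain an exact equality for $\ker(T_M\,\rho)$, and treat the non-congruence finite-index case explicitly), but the substance is the same.
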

\begin{proof}
A necessary condition for $\gamma \in \ker\, T_M\,\rho$ is $\ov{m \gamma^{-1}} = m$ for all $m \in \Delta_M$. For our purpose, it suffices to observe that the principal congruence subgroup $\Gamma(M)$ acts trivially on $\Delta_M$. Given $\gamma \in \Gamma(M)$, we have $\gamma \in \ker\, T_M\,\rho$ if $I_m^{-1}(\gamma^{-1}) = m^{-1} \gamma m \in \ker(\rho)$ for all~$m$. In other words, we have
\begin{gather*}
  \ker \big( T_M\,\rho \big)
\subseteq
   \Gamma(M) \cap \bigcap_{m \in \Delta_M} m (\ker\, \rho) m^{-1}
\,\tx{.}
\end{gather*}
If $\Gamma(N) \subseteq \ker\,\rho$ for some~$N$, we readily verify that $\Gamma(MN) \subseteq \ker\big( T_M\, \rho \big)$, completing the proof.
\end{proof}

\begin{lemma}
\label{la:hecke-operator-preserves-unitarity}
Assume that $\rho$ is unitary with respect to the scalar product $\langle \cdot\,,\, \cdot\, \rangle_\rho$ on $V(\rho)$.  Then $T_M\, \rho$ is unitary with respect to
\begin{gather*}
  \big\langle v \otimes \frake_{m_v},\, w \otimes \frake_{m_w} \big\rangle
=
  \begin{cases}
    \langle v,\, w \rangle_\rho
  \text{,}
  &
    \text{if $m_v = m_w$;}
  \\
    0
  \text{,}
  &
    \text{otherwise.}
  \end{cases}
\end{gather*}
\end{lemma}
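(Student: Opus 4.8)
The plan is to check directly that each operator $(T_M\,\rho)(\gamma)$, $\gamma \in \SL{2}(\ZZ)$, preserves the claimed scalar product; since the computation is uniform in $\gamma$, no appeal to generators of $\SL{2}(\ZZ)$ is needed. The scalar product is sesquilinear, and the vectors $v \otimes \frake_m$ with $v$ ranging over an orthonormal basis of $V(\rho)$ and $m$ over $\Delta_M$ span $V(T_M\,\rho)$, so it suffices to verify
\[
  \big\langle (T_M\,\rho)(\gamma)(v \otimes \frake_{m_v}),\, (T_M\,\rho)(\gamma)(w \otimes \frake_{m_w}) \big\rangle
  =
  \big\langle v \otimes \frake_{m_v},\, w \otimes \frake_{m_w} \big\rangle
\]
for all such basis-type vectors and all $\gamma$.

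First I would record that the right action $m \mapsto \ov{m\gamma}$ of $\SL{2}(\ZZ)$ on the finite set $\Delta_M$ is by bijections. This is immediate from the cocycle relation $I_m(\gamma_1\gamma_2) = I_m(\gamma_1)\, I_{m\gamma_1}(\gamma_2)$ recorded above, which forces $\ov{\ov{m\gamma_1}\,\gamma_2} = \ov{m\,\gamma_1\gamma_2}$ and $\ov{m I_2} = m$, so that the actions of $\gamma$ and of $\gamma^{-1}$ on $\Delta_M$ are mutually inverse permutations. In particular $\ov{m_v\gamma^{-1}} = \ov{m_w\gamma^{-1}}$ holds if and only if $m_v = m_w$.

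Now I would substitute the formula $(T_M\,\rho)(\gamma)(v \otimes \frake_m) = \rho\big(I_m^{-1}(\gamma^{-1})\big)(v) \otimes \frake_{m\gamma^{-1}}$ from \eqref{eq:def-hecke-operator-on-reps}. If $m_v \ne m_w$, then by the previous paragraph the two resulting vectors lie in distinct $\frake$-lines, so both sides of the desired identity vanish. If $m_v = m_w =: m$, then the left-hand side equals $\big\langle \rho\big(I_m^{-1}(\gamma^{-1})\big)v,\, \rho\big(I_m^{-1}(\gamma^{-1})\big)w \big\rangle_\rho$, which equals $\langle v,\, w \rangle_\rho$ since $\rho$ is unitary, and the right-hand side is $\langle v,\, w \rangle_\rho$ as well. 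Both cases check out, completing the proof.

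I do not expect a genuine obstacle here: the content is purely the fact that $\Delta_M$ carries a $\SL{2}(\ZZ)$-action by permutations together with unitarity of $\rho$. The only point requiring care is the index bookkeeping in the cocycle, namely confirming that the $\frake$-component of $(T_M\,\rho)(\gamma)(v \otimes \frake_m)$ is precisely $\frake_{\ov{m\gamma^{-1}}}$ and that $m \mapsto \ov{m\gamma^{-1}}$ is a permutation, which is exactly what the cocycle property guarantees.
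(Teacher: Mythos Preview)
Your proof is correct and follows exactly the paper's approach: the paper's proof is the single sentence ``This follows directly, since $\SL{2}(\ZZ)$ acts on the $\frake_m$ by permutations,'' and you have simply unpacked that sentence in full detail.
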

\begin{proof}
This follows directly, since $\SL{2}(\ZZ)$ acts on the $\frake_m$ by permutations.
\end{proof}

The following two homomorphisms involving vector valued Hecke operators on representations are central to our further considerations. Their analogues for attached spaces of modular forms is stated in Theorem~\ref{thm:inclusion-of-hecke-operator-tensor-product-on-modular-forms}. Note that to ease discussion from now on we will identify a representation with its representation space. In the next statement and throughout the paper, inclusions are denoted by arrows~$\lhra$ and surjections are denoted by arrows~$\thra$. 
\begin{proposition}
\label{prop:hecke-operator-comonoidal-on-reps}
For every positive integer $M$, for every two finite dimensional $\SL{2}(\ZZ)$\nbd representations $\rho$ and $\sigma$, and for every homomorphism $\phi:\, \rho \ra \sigma$ between them, the following maps are homomorphisms of $\SL{2}(\ZZ)$\nbd representations:
\begin{alignat}{2}
\label{eq:prop:hecke-operator-comonoidal-on-reps:TMphi}
  T_M\,\phi :\;
  T_M\, \rho &\lra T_M\, \sigma
\tx{,}\qquad&
  v \otimes \frake_m &\lmto \phi(v) \otimes \frake_m
\,\tx{;}
\\[.3em]
\label{eq:prop:hecke-operator-comonoidal-on-reps:comonoidal-inclusion}
  \bbone &\lhra T_M\,\bbone
\tx{,}\qquad&
  c &\lmto c\, \sum_{m \in \Delta_M} \frake_m
\,\tx{;}
\\
\label{eq:prop:hecke-operator-comonoidal-on-reps:comonoidal-coherence}
  (T_M\, \rho) \otimes (T_M\, \sigma)
&\lthra
  T_M\, (\rho \otimes \sigma)
\tx{,}\qquad&
  (v \otimes \frake_m) \otimes (w \otimes \frake_{m'})
&\lmto
  \begin{cases}
    (v \otimes w) \otimes \frake_m
  \tx{,}
  &
    \tx{if $m = m'$;}
  \\
    0
  \tx{,}
  &
    \tx{otherwise.}
  \end{cases}
\end{alignat}
If $\psi :\, \sigma \ra \varpi$ is a further homomorphism of $\SL{2}(\ZZ)$\nbd representations, then $T_M\, (\psi \circ \phi) = (T_M\, \psi) \circ (T_M\, \phi)$.
%\label{eq:prop:hecke-operator-comonoidal-on-reps:TMphi-functor}
\end{proposition}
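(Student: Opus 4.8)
The plan is to verify one by one that each of the three displayed maps intertwines the relevant $\SL{2}(\ZZ)$-actions, using in each case only the definition \eqref{eq:def-hecke-operator-on-reps} of $T_M$ on representations and the cocycle identity $I_m(\gamma_1\gamma_2) = I_m(\gamma_1)\,I_{m\gamma_1}(\gamma_2)$ already established above; functoriality $T_M(\psi\circ\phi) = (T_M\psi)\circ(T_M\phi)$ is then immediate from the formula for $T_M\phi$.

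\emph{The map \eqref{eq:prop:hecke-operator-comonoidal-on-reps:TMphi}.} First I would check that $T_M\phi$ is $\SL{2}(\ZZ)$-equivariant. Apply $T_M\phi$ after $(T_M\rho)(\gamma)$ to a basis vector $v\otimes\frake_m$: by \eqref{eq:def-hecke-operator-on-reps} this is $T_M\phi\big(\rho(I_m^{-1}(\gamma^{-1}))v\otimes\frake_{m\gamma^{-1}}\big) = \phi\big(\rho(I_m^{-1}(\gamma^{-1}))v\big)\otimes\frake_{m\gamma^{-1}}$. Since $\phi$ intertwines $\rho$ and $\sigma$, this equals $\sigma(I_m^{-1}(\gamma^{-1}))\phi(v)\otimes\frake_{m\gamma^{-1}} = (T_M\sigma)(\gamma)\big(\phi(v)\otimes\frake_m\big)$, as desired. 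The index sets $\Delta_M$ on both sides are literally the same, so the cocycles match with no bookkeeping. Functoriality then follows since $T_M(\psi\circ\phi)(v\otimes\frake_m) = (\psi\circ\phi)(v)\otimes\frake_m = (T_M\psi)\big(\phi(v)\otimes\frake_m\big) = (T_M\psi)\circ(T_M\phi)(v\otimes\frake_m)$.

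\emph{The map \eqref{eq:prop:hecke-operator-comonoidal-on-reps:comonoidal-inclusion}.} Here I must show the vector $\xi := \sum_{m\in\Delta_M}\frake_m \in V(T_M\bbone)$ is $\SL{2}(\ZZ)$-invariant (then $c\mapsto c\xi$ is an inclusion $\bbone\hookrightarrow T_M\bbone$, injective since $\xi\neq 0$). By \eqref{eq:def-hecke-operator-on-reps} with $\rho=\bbone$, $(T_M\bbone)(\gamma)\frake_m = \frake_{m\gamma^{-1}}$, so $(T_M\bbone)(\gamma)\xi = \sum_{m}\frake_{m\gamma^{-1}}$; since $m\mapsto \ov{m\gamma^{-1}}$ permutes $\Delta_M$ (it is the right action noted before Definition~\ref{def:hecke-operator}), this reindexes back to $\xi$.

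\emph{The map \eqref{eq:prop:hecke-operator-comonoidal-on-reps:comonoidal-coherence}.} Write $\Phi$ for the proposed map. Surjectivity is clear since every $(v\otimes w)\otimes\frake_m$ is hit by $(v\otimes\frake_m)\otimes(w\otimes\frake_m)$. For equivariance, note $V((T_M\rho)\otimes(T_M\sigma))$ has basis $(v\otimes\frake_m)\otimes(w\otimes\frake_{m'})$ and the $\gamma$-action sends $\frake_m\mapsto\frake_{m\gamma^{-1}}$, $\frake_{m'}\mapsto\frake_{m'\gamma^{-1}}$ on the two tensor factors respectively (with the corresponding $\rho$- resp.\ $\sigma$-twists). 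Applying $\Phi$ first kills the term unless $m=m'$, in which case we get $(v\otimes w)\otimes\frake_m$ and then $(T_M(\rho\otimes\sigma))(\gamma)$ sends it to $(\rho\otimes\sigma)(I_m^{-1}(\gamma^{-1}))(v\otimes w)\otimes\frake_{m\gamma^{-1}}$. Conversely, acting by $\gamma$ first sends $(v\otimes\frake_m)\otimes(w\otimes\frake_{m'})$ to $\big(\rho(I_m^{-1}(\gamma^{-1}))v\otimes\frake_{m\gamma^{-1}}\big)\otimes\big(\sigma(I_{m'}^{-1}(\gamma^{-1}))w\otimes\frake_{m'\gamma^{-1}}\big)$; applying $\Phi$ keeps this only when $m\gamma^{-1}=m'\gamma^{-1}$, i.e.\ $m=m'$, giving $\big(\rho(I_m^{-1}(\gamma^{-1}))v\otimes\sigma(I_m^{-1}(\gamma^{-1}))w\big)\otimes\frake_{m\gamma^{-1}}$, which by definition of the tensor action equals the previous expression. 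The only point needing care is that $m=m'\iff m\gamma^{-1}=m'\gamma^{-1}$ in $\Delta_M$, which holds because $\gamma$ acts by a permutation of $\Delta_M$; this is what makes the "kill off-diagonal terms" recipe well-defined and equivariant.

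\emph{Main obstacle.} None of these is deep; the only place where one can slip is keeping the two distinct cocycles $I_m$ (from Section~\ref{ssec:hecke-operators-on-representations}) and $I_\beta$ (from Section~\ref{ssec:preliminaries:induced-representations}) straight and checking that the "diagonal" condition $m=m'$ in \eqref{eq:prop:hecke-operator-comonoidal-on-reps:comonoidal-coherence} is preserved by the $\SL{2}(\ZZ)$-action so that $\Phi$ is both well-defined on the tensor product and equivariant. That is the single substantive verification; everything else is formal manipulation with \eqref{eq:def-hecke-operator-on-reps}.
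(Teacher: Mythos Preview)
Your proposal is correct and follows essentially the same approach as the paper: both verify equivariance of each map directly from the definition~\eqref{eq:def-hecke-operator-on-reps}, with the key observation in all three cases being that $\SL{2}(\ZZ)$ acts on $\Delta_M$ by permutations (so $\sum_m\frake_m$ is fixed, and $m=m'\iff m\gamma^{-1}=m'\gamma^{-1}$). Your write-up is slightly more explicit in noting surjectivity of~\eqref{eq:prop:hecke-operator-comonoidal-on-reps:comonoidal-coherence} and spelling out functoriality, but the substance is identical.
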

\begin{proof}
The equality
\begin{gather*}
  (T_M\,\rho)(\gamma)\, (T_M\,\phi) (v \otimes \frake_m)
=
  \sigma\big( I_m^{-1}(\gamma^{-1}) \big) \big(\phi(v) \otimes \frake_{m \gamma^{-1}} \big)
=
  \phi\big( \rho\big(I_m^{-1}(\gamma^{-1}) \big) v \big) \otimes \frake_{m \gamma^{-1}}
\,\text{,}
\end{gather*}
which yields $(T_M\,\phi)\, (T_M\,\rho)(\gamma) (v \otimes \frake_m)$, establishes that $T_M\, \phi$ is a homomorphism. Compatibility with composition of homomorphisms is readily verified.

To show that~\eqref{eq:prop:hecke-operator-comonoidal-on-reps:comonoidal-inclusion} defines a homomorphism, it suffices to observe that $\SL{2}(\ZZ)$ acts on $\Delta_M$ by permutations. In particular, the action on $\sum_m \frake_m$ is trivial.

For the same reason, $m \gamma^{-1} = m' \gamma^{-1}$ implies that $m = m'$. In particular, for $m \ne m'$ we have
\begin{gather*}
  \big( T_M\, \rho \otimes T_M\, \sigma \big)(\gamma)\; 
  \big( (v \otimes \frake_m) \otimes (w \otimes \frake_{m'}) \big)
=
  \big( I_m^{-1}(\gamma^{-1})v \otimes \frake_{m \gamma^{-1}} \big)
  \otimes
  \big( I_{m'}^{-1}(\gamma^{-1})w \otimes \frake_{m' \gamma^{-1}} \big)
\lmto
  0  
\end{gather*}
under the map~\eqref{eq:prop:hecke-operator-comonoidal-on-reps:comonoidal-coherence}. The following computation thus proves that~\eqref{eq:prop:hecke-operator-comonoidal-on-reps:comonoidal-coherence} is a homomorphism:
\begin{multline*}
  \big( T_M\, \rho \otimes T_M\, \sigma \big)(\gamma)\; 
  \big( (v \otimes \frake_m) \otimes (w \otimes \frake_m) \big)
=
  \big( \rho(I_m^{-1}(\gamma^{-1})) v \otimes \frake_{m \gamma^{-1}} \big)
  \otimes
  \big( \sigma( I_m^{-1}(\gamma^{-1})) w \otimes \frake_{m \gamma^{-1}} \big)
\\
\lmto{}
  \big( \rho(I_m^{-1}(\gamma^{-1})) v \otimes \sigma( I_m^{-1}(\gamma^{-1})) w \big)
  \otimes \frake_{m \gamma^{-1}}
=
  \big( T_M\, (\rho \otimes \sigma) \big)(\gamma)\;
  \big( (v \otimes w) \otimes \frake_m \big)
\tx{.}
\end{multline*}
\end{proof}

\subsection{Hecke operators on modular forms}
\label{ssec:hecke-operators-on-modular-forms}

For $m = \left(\begin{smallmatrix} a & b \\ 0 & d \end{smallmatrix}\right) \in \GL{2}(\RR)$ and $f :\, \HS \ra \CC$, we define
\begin{gather*}
  \big( f \big|_k m \big) (\tau)
=
  \big(\tfrac{a}{d}\big)^{\frac{k}{2}}\,
  f\big(\frac{a\tau + b}{d}\big)
\text{.}
\end{gather*}

\begin{definition}
\label{def:hecke-operator-on-modular-forms}
Fix a representation $\rho$ of $\SL{2}(\ZZ)$ with finite index kernel and a positive integer~$M$.  Given $f \in \rmM_k(\rho)$, we define $T_M\,f$ by
\begin{gather}
  \big( T_M\, f \big)(\tau)
=
  \sum_{m \in \Delta_M} \big( f \big|_k m \big) \otimes \frake_m
\,\text{.}
\end{gather}
\end{definition}
\begin{proposition}
If $f \in \rmM_k(\rho)$, then $T_M\,f \in \rmM_k(T_M\, \rho)$.
\end{proposition}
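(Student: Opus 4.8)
The plan is to verify the two defining conditions of a modular form of type $T_M\,\rho$: the transformation law under $\SL{2}(\ZZ)$, and boundedness of every component at the cusp. Holomorphy of $T_M\,f :\, \HS \ra V(T_M\,\rho)$ is immediate, since each $f\big|_k m$ is a constant multiple of $f$ precomposed with a holomorphic M\"obius transformation and the sum is finite. So the whole content is the transformation law.

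First I would record the compatibility of the slash action with matrix multiplication. Extending $\big|_k$ to $\GL{2}^+(\RR)$ by $(f\big|_k g)(\tau) = \det(g)^{k/2}(c\tau+d)^{-k}f(g\tau)$ makes $\big|_k$ a right action, and it specialises to the definition of $f\big|_k m$ for $m \in \Delta_M$ (where $\det m = M$) and to $f\big|_k \gamma$ for $\gamma \in \SL{2}(\ZZ)$. For $\gamma \in \SL{2}(\ZZ)$ and $m \in \Delta_M$ the integral matrix $m\gamma$ has determinant~$M$, so the decomposition $m\gamma = I_m(\gamma)\,\ov{m\gamma}$ of Section~\ref{ssec:hecke-operators-on-representations} applies with $I_m(\gamma) \in \SL{2}(\ZZ)$ and $\ov{m\gamma} \in \Delta_M$. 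Rewriting the modularity of $f$ as $f\big|_k \gamma' = \rho(\gamma')\,f$ for $\gamma' \in \SL{2}(\ZZ)$, and using that the $\tau$\nobreakdash-independent linear map $\rho(I_m(\gamma))$ commutes with $\big|_k \ov{m\gamma}$, the right-action property yields the key identity
\[
  f\big|_k(m\gamma)
  \;=\;
  \big(f\big|_k I_m(\gamma)\big)\big|_k \ov{m\gamma}
  \;=\;
  \rho\big(I_m(\gamma)\big)\,\big(f\big|_k \ov{m\gamma}\big)
  \,\text{.}
\]

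I would then substitute this into $(T_M\,f)\big|_{k, T_M\rho}\gamma(\tau) = (c\tau+d)^{-k}\,(T_M\,\rho)(\gamma^{-1})\sum_{m \in \Delta_M}(f\big|_k m)(\gamma\tau) \otimes \frake_m$. For each $m$, the factor $(c\tau+d)^{-k}(f\big|_k m)(\gamma\tau)$ is exactly $\big((f\big|_k m)\big|_k\gamma\big)(\tau) = \big(f\big|_k(m\gamma)\big)(\tau) = \rho(I_m(\gamma))\,(f\big|_k\ov{m\gamma})(\tau)$ by the identity above; then applying $(T_M\,\rho)(\gamma^{-1})$ through its description in Remark~\ref{rm:hecke-operator} (with the overline convention $\frake_{m\gamma} = \frake_{\ov{m\gamma}}$) replaces $\rho(I_m(\gamma))$ by $\rho(I_m^{-1}(\gamma))\,\rho(I_m(\gamma)) = \id$ and sends $\frake_m$ to $\frake_{\ov{m\gamma}}$. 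Since $m \mto \ov{m\gamma}$ is the permutation of $\Delta_M$ induced by the right action, reindexing the sum by $m' = \ov{m\gamma}$ recovers $\sum_{m' \in \Delta_M}(f\big|_k m') \otimes \frake_{m'} = (T_M\,f)(\tau)$, which is condition~(i).

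For condition~(ii): $V(T_M\,\rho)^\vee = V(\rho)^\vee \otimes \CC[\Delta_M]^\vee$ is spanned by functionals given by some $v^* \in V(\rho)^\vee$ composed with the projection onto the $\frake_m$\nobreakdash-block, on which $T_M\,f$ takes the value $(f\big|_k m)(\tau)$. Writing $m = \left(\begin{smallmatrix} a & b \\ 0 & d \end{smallmatrix}\right)$, we have $v^*\big((f\big|_k m)(\tau)\big) = (a/d)^{k/2}\,(v^*\circ f)\big(\tfrac{a\tau+b}{d}\big)$; since $a,d>0$, the imaginary part $\Im\big(\tfrac{a\tau+b}{d}\big) = \tfrac{a}{d}\Im\tau$ tends to infinity with $\Im\tau$, and $v^*\circ f$ is bounded on a region $\Im z \ge Y_0$ because $f \in \rmM_k(\rho)$, so $v^*\circ(T_M\,f)$ is bounded as $\tau \to i\infty$. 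This completes the proof that $T_M\,f \in \rmM_k(T_M\,\rho)$. The only genuine obstacle is bookkeeping — keeping the cocycle $I_m$, its inverse, the reduction $\ov{m\gamma}$, and the two incarnations of $\big|_k$ aligned, and checking that the permutation $m \mto \ov{m\gamma}$ is exactly what closes up the reindexing — there is no analytic difficulty.
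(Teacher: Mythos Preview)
Your argument is correct and follows essentially the same route as the paper: both verify the transformation law by the direct computation $(T_M f)\big|_{k,T_M\rho}\,\gamma = \sum_m \rho(I_m^{-1}(\gamma))\,(f\big|_k m\gamma)\otimes \frake_{m\gamma}$, then use $m\gamma = I_m(\gamma)\,\ov{m\gamma}$ together with $f\big|_k I_m(\gamma) = \rho(I_m(\gamma))f$ to cancel the $\rho$-factors and reindex. Your write-up is in fact slightly more complete than the paper's, since you also spell out holomorphy and the boundedness condition~(ii), which the paper's proof leaves implicit.
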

\begin{proof}
We have to check that
\begin{gather*}
  (T_M\, f) \big|_k \gamma
=
  (T_M\, \rho)(\gamma)\, f
\end{gather*}
for all $\gamma \in \SL{2}(\ZZ)$. Our proof is a direct computation using the definitions given in ~\eqref{eq:def-hecke-operator-on-reps} and~\eqref{def:hecke-operator-on-modular-forms}.
\begin{multline*}
  \big( T_M\,f \big) \big|_{k, T_M\,\rho}\,\gamma
=
  \sum_{m \in \Delta_M} \big( \big( f \big|_k m \big) \otimes \frake_m \big) \big|_{k, T_M\,\rho}\, \gamma
=
  \sum_{m \in \Delta_M} \rho(I_m^{-1}(\gamma))\, \big( f \big|_k m \gamma \big) \otimes \frake_{m \gamma}
\\
=
  \sum_{m \in \Delta_M} \rho(I_m^{-1}(\gamma))\, \big( f \big|_k I_m(\gamma) \ov{m \gamma} \big) \otimes \frake_{m \gamma}
=  
  \sum_{m \in \Delta_M} \big( f \big|_k \ov{m \gamma} \big) \otimes \frake_{m \gamma}
=
  T_M\, f
\tx{.}
\end{multline*}
In the third equality, we have used the fact that $m \gamma = I_m(\gamma) \ov{m \gamma}$, by definition of the cocycle~$I$. The fourth equation follows from $f \in \rmM_k(\rho)$. The last equality follows when replacing $m \gamma$ by~$m' \in \Delta_M$.
\end{proof}

\begin{theorem}
\label{thm:inclusion-of-hecke-operator-tensor-product-on-modular-forms}
Fix a positive integer~$M$ and two representations $\rho$ and $\sigma$ of $\SL{2}(\ZZ)$. Write $\pi_{M,\rho,\sigma}$ for the homomorphism of representations defined in~\eqref{eq:prop:hecke-operator-comonoidal-on-reps:comonoidal-coherence}. For any weight $k \in \ZZ$, it gives rise to a linear map of modular forms by composition:
\begin{gather}
  \rmM_{k}\big( T_M\, \rho \otimes T_M\, \sigma \big)
\lra
  \rmM_{k}\big( T_M\, (\rho \otimes \sigma) \big)
\tx{,}\quad
  f \lmto \pi_{M,\rho,\sigma} \,\circ\, f
\tx{.}
\end{gather}
This map intertwines the tensor product of modular forms and the vector valued Hecke operator. That is, we have
\begin{gather}
  \pi_{M,\rho,\sigma} \,\circ\,
  \big( (T_M\, f) \otimes (T_M\, g) \big)
=
  T_M\, (f \otimes g)
\end{gather}
for any two modular forms $f \in \rmM_k(\rho)$ and $g \in \rmM_l(\sigma)$.
\end{theorem}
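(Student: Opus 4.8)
The plan is to obtain both assertions from Proposition~\ref{prop:hecke-operator-comonoidal-on-reps} together with the general principle, recorded just before Proposition~\ref{prop:decomposition-of-modular-forms-spaces}, that a homomorphism $\phi:\, \tau \ra \tau'$ of finite dimensional $\SL{2}(\ZZ)$\nbd representations with finite index kernel induces a weight preserving linear map $\rmM_k(\tau) \ra \rmM_k(\tau')$, $f \lmto \phi \circ f$.  First I would record the (routine) verification of this principle: $\phi \circ f$ is holomorphic; the transformation law $\big( \phi \circ f \big)\big|_{k, \tau'}\,\gamma = \phi \circ f$ is immediate from $\tau'(\gamma^{-1})\, \phi = \phi\, \tau(\gamma^{-1})$ and $f\big|_{k,\tau}\,\gamma = f$; and for $w \in V(\tau')^\vee$ the function $w \circ (\phi \circ f) = (w \circ \phi) \circ f$, with $w \circ \phi \in V(\tau)^\vee$, is bounded (resp.\ tends to~$0$) as $\tau \to i\infty$ because $f \in \rmM_k(\tau)$ (resp.\ $f \in \rmS_k(\tau)$).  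Since $\pi_{M,\rho,\sigma}$ is a homomorphism of representations by Proposition~\ref{prop:hecke-operator-comonoidal-on-reps}, and since $T_M$ preserves finite index (indeed congruence) kernels, applying this principle with $\phi = \pi_{M,\rho,\sigma}$ and $\tau = T_M\,\rho \otimes T_M\,\sigma$ establishes the first claim for every weight; specializing the weight to $k + l$ supplies the space in which the asserted identity lives.

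For the intertwining identity I would compute pointwise.  Fix $\tau \in \HS$.  Unwinding Definition~\ref{def:hecke-operator-on-modular-forms} and using bilinearity of the tensor product,
\begin{gather*}
  \big( (T_M\, f) \otimes (T_M\, g) \big)(\tau)
=
  \sum_{m, m' \in \Delta_M}
  \big( (f\big|_k m)(\tau) \otimes \frake_m \big) \otimes \big( (g\big|_l m')(\tau) \otimes \frake_{m'} \big)
\,\text{.}
\end{gather*}
Now apply $\pi_{M,\rho,\sigma}$ from~\eqref{eq:prop:hecke-operator-comonoidal-on-reps:comonoidal-coherence}: every summand with $m \ne m'$ is sent to~$0$, and the summand with $m = m'$ is sent to $\big( (f\big|_k m)(\tau) \otimes (g\big|_l m)(\tau) \big) \otimes \frake_m$.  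It then remains to recognize $(f\big|_k m)(\tau) \otimes (g\big|_l m)(\tau)$ as $\big( (f \otimes g)\big|_{k+l}\, m \big)(\tau)$; writing $m = \left(\begin{smallmatrix} a & b \\ 0 & d \end{smallmatrix}\right)$ this is the elementary identity
\begin{gather*}
  \big(\tfrac{a}{d}\big)^{\frac{k}{2}}\, f\big(\tfrac{a\tau+b}{d}\big)
  \otimes
  \big(\tfrac{a}{d}\big)^{\frac{l}{2}}\, g\big(\tfrac{a\tau+b}{d}\big)
=
  \big(\tfrac{a}{d}\big)^{\frac{k+l}{2}}\, (f \otimes g)\big(\tfrac{a\tau+b}{d}\big)
\,\text{,}
\end{gather*}
i.e.\ the slash action of a single matrix is multiplicative in the weight.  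Summing over $m \in \Delta_M$ and comparing once more with Definition~\ref{def:hecke-operator-on-modular-forms} gives $\pi_{M,\rho,\sigma} \circ \big( (T_M\, f) \otimes (T_M\, g) \big) = T_M\,(f \otimes g)$.

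I expect no genuine obstacle here: the argument is bookkeeping, and the only step deserving attention is that $\pi_{M,\rho,\sigma}$ discards exactly the off-diagonal terms $m \ne m'$.  This is precisely the mechanism behind the ``compatibility with products'' discussed in the introduction: the diagonal terms $\big( f\big|_k m \big) \otimes \big( g\big|_l m \big) = (f \otimes g)\big|_{k+l}\, m$ reassemble into $T_M\,(f\otimes g)$, whereas retaining the full double sum over $\Delta_M \times \Delta_M$ would correspond to the generally different product of classical Hecke operators.
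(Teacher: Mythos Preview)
Your proof is correct and follows essentially the same approach as the paper: invoke Proposition~\ref{prop:hecke-operator-comonoidal-on-reps} together with the principle that homomorphisms of representations induce maps on modular forms for the first part, then expand $(T_M f)\otimes(T_M g)$ as a double sum over $\Delta_M \times \Delta_M$, kill the off-diagonal terms via $\pi_{M,\rho,\sigma}$, and recognize the result as $T_M(f\otimes g)$. You spell out more detail than the paper does---in particular the verification that $(f\big|_k m)\otimes(g\big|_l m) = (f\otimes g)\big|_{k+l}\,m$---but the argument is the same.
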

\begin{remark}
Compare Theorem~\ref{thm:inclusion-of-hecke-operator-tensor-product-on-modular-forms} to the fact that $(f \big|_k\, T_M) (g \big|_l\, T_M) \ne fg \big|_{k+l} T_M$ for general $f \in \rmM_k(\Gamma)$ and $g \in \rmM_l(\Gamma)$ and finite index subgroups $\Gamma \subseteq \SL{2}(\ZZ)$, where $\big|\, T_M$ is the classical Hecke operator.
\end{remark}
\begin{proof}[{Proof of Theorem~\ref{thm:inclusion-of-hecke-operator-tensor-product-on-modular-forms}}]
The first part follows immediately from the fact that $\pi_{M, \rho, \sigma}$ is a homomorphism. The second part is quickly verified:
\begin{gather*}
  \pi_{M,\rho,\sigma}
  \Big(
  \sum_{m, m' \in \Delta_M}
  \big( f\big|_k\, m \otimes \frake_m \big)
  \otimes
  \big( g\big|_l\, m' \otimes \frake_{m'} \big)
  \Big)
=
  \sum_{m \in \Delta_M}
  \big( f\big|_k\, m \big)
  \otimes
  \big( g\big|_l\, m \big)
  \,\otimes\, \frake_m 
\tx{.}
\end{gather*}
\end{proof}

\subsubsection{Hecke operators from induced representations}

We have decided to define Hecke operators in a way that resembles the classical construction most. Instead however, we could equivalently define $T_M\,\rho$ as a suitable $\SL{2}(\ZZ)$\nbd invariant subspace of an induced representation. We discuss this idea in a bit more detail.

Given a representation~$\rho$ of $\SL{2}(\ZZ)$ let
\begin{gather*}
  \wht\rho
=
  \Ind_{\SL{2}(\ZZ)}^{\GL{2}^+(\QQ)}\, \rho
\tx{,}
\end{gather*}
where $\GL{2}(\QQ)^+ \subset \GL{2}(\QQ)$ is the subgroup of matrices with positive determinant. Since $\GL{2}(\QQ)^+$ is discrete, a definition of the induction analogous with the one in Section~\ref{ssec:preliminaries:induced-representations} works in this case, ignoring slight topological complications. Fixing a set of representatives of $\SL{2}(\ZZ) \backslash \GL{2}(\QQ)^+$, set $\gamma \delta = \hat{I}_\gamma(\delta) \ov{\gamma \delta}$ for $\gamma$ and $\ov{\gamma \delta}$ one of these representative and $\hat{I}_\gamma(\delta) \in \SL{2}(\ZZ)$. Clearly, $\hat{I}_\gamma(\delta)$ is a cocyle.

For simplicity we assume that the chosen set of representatives of $\SL{2}(\ZZ) \backslash \GL{2}(\QQ)^+$ comprises all $\Delta_M$. We claim that
\begin{gather}
\label{eq:hecke-operator-as-induced-representation}
  \iota_{\Ind, M} :\,
  T_M\, \rho \lhra \wht\rho\,,\quad
  v \otimes \frake_m \lmto v \otimes \frake_m
\end{gather}
is a well-defined inclusion of $\SL{2}(\ZZ)$\nbd representations. In particular, $\bigoplus_M T_M\,\rho \hra \wht\rho$. Note that there is an associated projection $\pi_{\Ind, M}$, that maps $\frake_\gamma$ with $\gamma \in \SL{2}(\ZZ) \backslash \GL{2}(\QQ)^+$ to either $\frake_\gamma$ if $\gamma \in \Delta_M$ or to zero, otherwise.

It is clear that \eqref{eq:hecke-operator-as-induced-representation} is well-defined, because of our assumption that $\Delta_M$ is part of the representatives of $\SL{2}(\ZZ) \backslash \GL{2}(\QQ)^+$. Equivariance with respect to $\SL{2}(\ZZ)$ is almost part of the definition of the cocycles. Indeed, for $m \in \Delta_M$ and $\delta \in \SL{2}(\ZZ)$, we have $\hat{I}_m(\delta) \ov{m \delta} = m \delta = I_m(\delta) \ov{m \delta}$.

Next, we compare the Hecke operator on modular forms and the induction of modular forms. Given $f \in \rmM_k(\rho)$, we set $\wht\Ind(f) = \sum_\gamma \big( f |_k \gamma \big) \otimes \frake_\gamma$, where the sum runs over $\SL{2}(\ZZ) \backslash \GL{2}(\QQ)^+$. We claim that $\pi_{\Ind, M}$ intertwines $T_M$ and $\wht\Ind$. That is, we have
\begin{gather*}
  \pi_{\Ind, M} \big( \wht\Ind(f) \big)
=
  T_M\,f
\end{gather*}
for all $f \in \rmM_k(\rho)$. The proof is straightforward and again makes use of the fact that the cocycles $I_m(\delta)$ and $\hat{I}_m(\delta)$ are essentially the same.

\subsubsection{Adjunction}

In~\eqref{eq:def:scalar-product}, we have already introduced a scalar product on all spaces of vector valued modular forms that we consider. In the context of vector valued Hecke operator, we are naturally led to ask for a corresponding adjunction formula. To describe it, let $m^\# = \left(\begin{smallmatrix} d & -b \\ -c & a \end{smallmatrix}\right)$ be the adjoint of a $2 \times 2$ matrix~$m$. Set for a representation $\rho$ and a positive integer~$M$
\begin{gather}
  \iota_{\rm adj}
:\,
  \rho \lhra T_M\, T_M\, \rho
\,\tx{,}\quad
  v \lmto \sum_{m \in \Delta_M} v \otimes \frake_m \otimes \frake_{m^\#}
\,\tx{.}
\end{gather}
A corresponding projection is defined by
\begin{gather}
  \pi_{\rm adj}
:\,
  T_M\, T_M\, \rho \lthra \rho
\,\tx{,}\quad
  \sum_{m,m' \in \Delta_M} v_{m,m'} \otimes \frake_m \otimes \frake_{m'}
  \lmto
  \sum_{m \in \Delta_M} v_{m, m^\#}
\,\tx{.}
\end{gather}
\begin{proposition}
\label{prop:adjoint-of-hecke-operator}
For every representation $\rho$ and every positive integer~$M$ the maps $\iota_{\rm adj}$ and $\pi_{\rm adj}$ are inclusions and projections of representations, respectively.

Given modular forms~$f \in \rmM_k(\rho)$ and~$g \in \rmM_k(T_M\,\rho)$, we have
\begin{gather}
\label{eq:prop:adjoint-of-hecke-operator}
  \big\langle T_M\, f,\, g \big\rangle
=
  \big\langle f,\, \pi_{\rm adj}\, T_M\, g \big\rangle
=
  \big\langle \iota_{\rm adj}\, f,\, T_M\, g \big\rangle
\,\tx{.}
\end{gather}
\end{proposition}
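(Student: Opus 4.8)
The plan is to treat the representation-theoretic claim and the adjunction identity separately, reducing the latter to a change of variables together with the standard fold of the Hecke correspondence.

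First I would observe that, with respect to the unitary structures on $V(\rho)$ and on $V(T_M\, T_M\, \rho)$ (the latter obtained by iterating Lemma~\ref{la:hecke-operator-preserves-unitarity}), the maps $\iota_{\rm adj}$ and $\pi_{\rm adj}$ are mutually adjoint: for $v, w \in V(\rho)$ and $m_0, m_0' \in \Delta_M$, a one-line pairing computation shows that $\langle \iota_{\rm adj}(v),\, w \otimes \frake_{m_0} \otimes \frake_{m_0'} \rangle$ equals $\langle v, w \rangle_\rho$ if $m_0' = \ov{m_0^\#}$ and $0$ otherwise, and that $\langle v,\, \pi_{\rm adj}(w \otimes \frake_{m_0} \otimes \frake_{m_0'}) \rangle_\rho$ takes the same value. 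Hence it suffices to check that one of the two is a homomorphism of $\SL{2}(\ZZ)$\nbd representations; the other is then the adjoint of an equivariant map between unitarizable representations, so equivariant too. Equivariance of, say, $\iota_{\rm adj}$ I would verify by applying $(T_M\, T_M\, \rho)(\gamma)$ to a basis vector $v \otimes \frake_m \otimes \frake_{m'}$ through~\eqref{eq:def-hecke-operator-on-reps} (used twice) and comparing with the action on $\iota_{\rm adj}(v) = \sum_m v \otimes \frake_m \otimes \frake_{\ov{m^\#}}$; this reduces to a handful of combinatorial identities tying the right $\SL{2}(\ZZ)$\nbd action on $\Delta_M$ to the cocycles $I_m$ and to the involution $m \mapsto m^\#$, all of which follow from $(ab)^\# = b^\#\, a^\#$, from $\gamma^\# = \gamma^{-1}$ for $\gamma \in \SL{2}(\ZZ)$, and from $m\, m^\# = M \cdot I_2$. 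That $\iota_{\rm adj}$ is an inclusion and $\pi_{\rm adj}$ a projection then follows from the identity $\pi_{\rm adj} \circ \iota_{\rm adj} = \lvert \Delta_M \rvert \cdot \id_\rho$, which is immediate. With $\pi_{\rm adj}$ identified as the adjoint of $\iota_{\rm adj}$, the second equality in~\eqref{eq:prop:adjoint-of-hecke-operator} is automatic: at every $\tau \in \HS$ one has $\langle \iota_{\rm adj}\, f(\tau),\, (T_M\, g)(\tau) \rangle_{T_M T_M \rho} = \langle f(\tau),\, \pi_{\rm adj}((T_M\, g)(\tau)) \rangle_\rho = \langle f(\tau),\, (\pi_{\rm adj}\, T_M\, g)(\tau) \rangle_\rho$, and integrating this against $\tfrac{dx\, dy}{y^{2-k}}$ over $\SL{2}(\ZZ) \backslash \HS$ — using the description of the Petersson product through the canonical $\iota$ — yields $\langle \iota_{\rm adj}\, f,\, T_M\, g \rangle = \langle f,\, \pi_{\rm adj}\, T_M\, g \rangle$.

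For the first equality, $\langle T_M\, f,\, g \rangle = \langle \iota_{\rm adj}\, f,\, T_M\, g \rangle$, I would argue by unfolding and change of variables. Writing $g = \sum_m g_m \otimes \frake_m$ with $g_m = \langle g, \frake_m \rangle$ and fixing a fundamental domain $\mathcal F$ for $\SL{2}(\ZZ)$, Definition~\ref{def:hecke-operator-on-modular-forms} and Lemma~\ref{la:hecke-operator-preserves-unitarity} give $\langle T_M\, f,\, g \rangle = \int_{\mathcal F} \sum_m \langle (f \big|_k m)(\tau),\, g_m(\tau) \rangle_\rho\, (\Im \tau)^k\, \tfrac{dx\, dy}{y^2}$. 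In the $m$\thdash\ summand I would substitute $w = m\tau$; since $\tfrac{dx\, dy}{y^2}$ is $\GL{2}^+(\RR)$\nbd invariant, since $\Im(m\tau) = \tfrac{a}{d}\, \Im\tau$ for $m = \left(\begin{smallmatrix} a & b \\ 0 & d \end{smallmatrix}\right)$, and since $\big|_k m$ and $\big|_k m^\#$ are mutually inverse weight\nbd$k$ actions — because $m\, m^\# = M \cdot I_2$ acts trivially on $\HS$ and the factor $(a/d)^{k/2}$ in $\big|_k$ is unchanged under scaling the matrix — the summand becomes $\int_{m\mathcal F} \langle f(w),\, (g_m \big|_k m^\#)(w) \rangle_\rho\, (\Im w)^k\, \tfrac{dx\, dy}{y^2}$, where $m^\#$ is the actual adjoint matrix. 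It then remains to fold $\bigsqcup_m m\mathcal F$ back onto $\mathcal F$: decomposing $m\mathcal F$ into the pieces $m\mathcal F \cap \gamma\mathcal F$ for $\gamma \in \SL{2}(\ZZ)$ and pulling each back by $w \mapsto \gamma w$, the $\SL{2}(\ZZ)$\nbd modularity of $f$, the transformation law $g_{m'} \big|_k \gamma = \rho(I_{m'}(\gamma))\, g_{\ov{m'\gamma}}$ of the components of $g$ (a rewriting of $g \big|_{k, T_M\rho}\, \gamma = g$), and the cocycle identities from the first step together make the $\rho$\nbd factors cancel and rearrange the integrand over $\mathcal F$ into $\sum_m \langle f(\tau),\, (g_m \big|_k \ov{m^\#})(\tau) \rangle_\rho\, (\Im\tau)^k$. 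This last expression is the integrand of $\langle \iota_{\rm adj}\, f,\, T_M\, g \rangle$: pairing $\iota_{\rm adj}\, f(\tau) = \sum_m f(\tau) \otimes \frake_m \otimes \frake_{\ov{m^\#}}$ against $(T_M\, g)(\tau) = \sum_{m, m'} (g_m \big|_k m')(\tau) \otimes \frake_m \otimes \frake_{m'}$ retains exactly the terms with $m' = \ov{m^\#}$, and this completes the proof.

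I expect the fold to be the main obstacle. It is the vector-valued incarnation of the classical fact that $T_M$ is self-adjoint up to an Atkin--Lehner twist, and the care lies in bookkeeping the $\SL{2}(\ZZ)$\nbd translates of $m\mathcal F$ that meet $\mathcal F$ and in checking that the cocycle identities make each contribution land on the correct term $g_{m'} \big|_k \ov{(m')^\#}$; note that $m \mapsto \ov{m^\#}$ is in general not a bijection of $\Delta_M$, so the fold genuinely redistributes the components rather than merely permuting them. A secondary issue is convergence: one should either assume one of $f, g$ cuspidal or carry Borcherds regularization through the substitution and the fold, neither of which disturbs it.
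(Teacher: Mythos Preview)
Your proposal is correct and follows the same route as the paper: both reduce the representation-theoretic claim to showing that $\iota_{\rm adj}$ is equivariant via the cocycle identities tied to $m \mapsto m^{\#}$, obtain the second equality in~\eqref{eq:prop:adjoint-of-hecke-operator} from the adjointness of $\iota_{\rm adj}$ and $\pi_{\rm adj}$, and establish the first by a term-by-term change of variables $\tau \mapsto m\tau$ (equivalently by $m/\sqrt{M} \in \SL{2}(\RR)$). The only difference is presentational: you spell out the fold of $\bigsqcup_m m\mathcal{F}$ back onto $\mathcal{F}$, whereas the paper subsumes this under ``the remainder of the proof is the same as in the classical setting''.
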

\begin{proof}
It is clear that $\pi_{\rm adj}$ is adjoint to $\iota_{\rm adj}$ so that is suffices to show that the latter is a homomorphism of representations. Given $v \in V(\rho)$ and $\delta \in \SL{2}(\ZZ)$, we have to check that
\begin{gather*}
  (T_M\,T_M\,\rho)(\delta^{-1})\, \big( \iota_{\rm adj}(v) \big)
=
  \iota_{\rm adj}\big( \rho(\delta^{-1})(v) \big)
\tx{.}
\end{gather*}
The right hand side side equals
\begin{gather*}
  \sum_{m \in \Delta_M}
  \rho(\delta^{-1})(v)
  \otimes \frake_m \otimes \frake_{m^\#}
\,\tx{,}
\end{gather*}
while the left hand side is
\begin{multline*}
  (T_M\,T_M\,\rho)(\delta^{-1})
  \sum_{m \in \Delta_M}\!
  v \otimes \frake_m \otimes \frake_{m^\#}
=
  \sum_{m \in \Delta_M}\!
  \big( T_M\,\rho \big) \big( I^{-1}_{m^\#}(\delta) \big)\,
  \big( v \otimes \frake_m \big)
  \otimes \frake_{m^\# \delta}
\\
=
  \sum_{m \in \Delta_M}\!
  \rho\big( I^{-1}_m( I_{m^\#}(\delta) ) \big)(v)
  \otimes \frake_{m I_{m^\#}(\delta)}
  \otimes \frake_{m^\# \delta}
\,\tx{.}
\end{multline*}
To establish the first part of the proposition, we therefore have to show that
\begin{gather*}
  \ov{m^\# \delta}^\#
=
  \ov{ m I_{m^\#}(\delta) }
\quad\tx{and}\quad
  I_m( I_{m^\#}(\delta)
=
  \delta
\,\tx{.}
\end{gather*}
We show the second equality first. Observe that we have
\begin{gather}
\label{eq:prop:adjoint-of-hecke-operator:proof}
  m I_{m^\#}(\delta) \ov{m^\# \delta}
=
  m m^\# \delta = M \delta
\quad\tx{implying that}\qquad
  m I_{m^\#}(\delta)
=
  \delta\, M \ov{\big( m^\# \delta \big)}^{-1}
\in
  \Delta_M
\,\tx{.}
\end{gather}
In other words, $\delta$ satisfies the defining property of $I_m(I_{m^\#}(\delta))$. We deduce the first equality by evaluating the product
\begin{gather*}
  \ov{m I_{m^\#}(\delta)}\; \ov{m^\# \delta}
=
  I^{-1}_m( I_{m^\#}(\delta) ) m I_{m^\#(\delta)}\,
  I^{-1}_{m^\#}(\delta) m^\# \delta
=
  I^{-1}_m( I_{m^\#}(\delta) ) m m^\# \delta
=
  I^{-1}_m( I_{m^\#}(\delta) ) M \delta
=
  M
\tx{,}
\end{gather*}
where the last equality follows from rewriting~\eqref{eq:prop:adjoint-of-hecke-operator:proof}. This finishes our proof that $\iota_{\rm adj}$ is an inclusion of representations.

It remains to argue that~\eqref{eq:prop:adjoint-of-hecke-operator} is true, which says that
\begin{gather*}
  \big\langle T_M\, f,\, g \big\rangle
=
  \big\langle f,\, \pi_{\rm adj}\, T_M\, g \big\rangle
=
  \big\langle \iota_{\rm adj}\, f,\, T_M\, g \big\rangle
\tx{.}
\end{gather*}
Using that $\pi_{\rm adj}$ and $\iota_{\rm adj}$ are adjoint, it suffices to consider equality of~$\big\langle T_M\, f,\, g \big\rangle$ and $
\big\langle \iota_{\rm adj}\, f,\, T_M\, g \big\rangle$. In order to keep computations as explicit as possible let $v_i \in V(\rho)$ and $v_i^\vee$ be an orthonormal basis and its dual. Let $f_i$ and $g_{i,m}$ be the corresponding components of~$f$ and~$g$. The left hand side of~\eqref{eq:prop:adjoint-of-hecke-operator} equals
\begin{gather*}
  \int_{\SL{2}(\ZZ) \backslash \HS}\,
  \sum_{i,m} \big( f_i\big|_k\,m \big) \ov{\big( g_{i,m} \big)}\;
  \frac{d\!x d\!y}{y^{2-k}}
\,\tx{.}
\end{gather*}
The right hand side is
\begin{gather*}
  \int_{\SL{2}(\ZZ) \backslash \HS}\,
  \sum_{i,m,m'} f_i \delta(m^\#,m') \ov{\big( g_{i,m}\big|_k\,m' \big) \big)}\;
  \frac{d\!x d\!y}{y^{2-k}}
=
  \int_{\SL{2}(\ZZ) \backslash \HS}\,
  \sum_{i,m,m'} f_i \ov{\big( g_{i,m}\big|_k\,m^\# \big)}\;
  \frac{d\!x d\!y}{y^{2-k}}
\,\tx{,}
\end{gather*}
where $\delta$ denotes the Kronecker delta function. The remainder of the proof is the same as in the classical setting. We exploit invariance of the measure, applying $m \slash \sqrt{M} \in \SL{2}(\RR)$. This yields
\begin{multline*}
  \int_{\SL{2}(\ZZ) \backslash \HS}\,
  \sum_{i,m,m'} \big( f_i \big|_k\, m \slash \sqrt{M} \big)
                \ov{\big( g_{i,m}\big|_k\,m^\# m \slash \sqrt{M} \big) \big)}\;
  \frac{d\!x d\!y}{y^{2-k}}
\\
=
  \int_{\SL{2}(\ZZ) \backslash \HS}\,
  \sum_{i,m,m'} \big( f_i \big|_k\, m \slash \sqrt{M} \big)
                \ov{\big( g_{i,m}\big|_k\, \sqrt{M} I_2 \big) \big)}\;
  \frac{d\!x d\!y}{y^{2-k}}
=
  \int_{\SL{2}(\ZZ) \backslash \HS}\,
  \sum_{i,m,m'} \big( f_i \big|_k\, m  \big) \ov{\big( g_{i,m} \big)}\;
  \frac{d\!x d\!y}{y^{2-k}}
\,\tx{,}
\end{multline*}
which is the left hand side of~\eqref{eq:prop:adjoint-of-hecke-operator}, as stated above.
\end{proof}

\subsection{Connections with known constructions}
\label{ssec:hecke-operators-known-constructions}

In this section, we illustrate how to obtain classical constructions on scalar valued modular forms in terms of the operators $T_N$. For $0 < N \in \ZZ$, we set $\rho_N = \Ind_{\Gamma_0(N)}\,\bbone$, and for a Dirichlet character~$\chi$ mod~$N$, we set $\rho_\chi = \Ind_{\Gamma_0(N)}\,\chi$. In particular, if $\chi$ is the trivial character mod~$N$, then $\rho_\chi = \rho_N$.

As a first step, we identify $T_M\,\bbone$ with a sum of~$\rho_N$'s for suitable~$N$.
\begin{lemma}
\label{la:ind-isomorphic-TN1}
For any positive integer $M$, we have
\begin{gather}
  T_M\,\bbone
\cong
  \bigoplus_{a^2 \isdiv M} \Ind_{\Gamma_0(M \slash a^2)}\,\bbone
=
  \bigoplus_{a^2 \isdiv M} \rho_{M \slash a^2}
\,\tx{.}
\end{gather}
\end{lemma}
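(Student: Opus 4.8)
The plan is to recognize $T_M\,\bbone$ as a permutation representation and decompose it orbit by orbit. Putting $\rho = \bbone$ in~\eqref{eq:def-hecke-operator-on-reps} the cocycle twist $\rho(I_m^{-1}(\gamma^{-1}))$ disappears, so $T_M\,\bbone$ is simply $\CC[\Delta_M]$ with $\SL{2}(\ZZ)$ acting by the left action $\gamma\cdot\frake_m = \frake_{\ov{m\gamma^{-1}}}$. Hence $T_M\,\bbone = \bigoplus_O \CC[O]$, with $O$ ranging over the orbits of $\SL{2}(\ZZ)$ on $\Delta_M$, and it remains to enumerate the orbits and identify the transitive permutation modules attached to them.

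First I would match the orbits with double cosets. For $m \in \Delta_M$ the orbit through $m$ is $\{\ov{m\gamma} : \gamma\in\SL{2}(\ZZ)\}$, and since $\Delta_M$ is a complete set of representatives for the left $\SL{2}(\ZZ)$-action on $\{x\in\Mat{2}(\ZZ) : \det x = M\}$, this orbit equals $\Delta_M\cap\bigl(\SL{2}(\ZZ)\,m\,\SL{2}(\ZZ)\bigr)$. Thus the orbits correspond bijectively to the double cosets $\SL{2}(\ZZ)\backslash\{\det=M\}/\SL{2}(\ZZ)$, which by the elementary divisor theorem (Smith normal form) are represented by the diagonal matrices $h_a = \diag(M/a,\,a)$ for positive integers $a$ with $a^2\isdiv M$; each $h_a$ lies in $\Delta_M$, and distinct $a$ yield distinct double cosets because the elementary divisor pair of $h_a$ is $(a,\,M/a)$.

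Then, for the orbit $O_a$ through $h_a$, the orbit-stabilizer theorem identifies $O_a$, as an $\SL{2}(\ZZ)$-set, with the coset space of the point stabilizer $\Gamma_a = \{\gamma\in\SL{2}(\ZZ) : \ov{h_a\gamma^{-1}} = h_a\} = \SL{2}(\ZZ)\cap h_a^{-1}\SL{2}(\ZZ)h_a$; comparing with the construction of induced representations in~\eqref{eq:def:induction-of-representations} applied to $\bbone$, this gives $\CC[O_a]\cong\Ind_{\Gamma_a}^{\SL{2}(\ZZ)}\,\bbone$. A one-line matrix computation now pins down $\Gamma_a$: conjugation of $\gamma = \left(\begin{smallmatrix} p & q \\ r & s\end{smallmatrix}\right)$ by $h_a$ fixes the diagonal and replaces $q,\,r$ by $(M/a^2)\,q$ and $(a^2/M)\,r$, which is an integer matrix precisely when $(M/a^2)\isdiv r$; hence $\Gamma_a = \Gamma_0(M/a^2)$ and $\CC[O_a]\cong\rho_{M/a^2}$. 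Summing over all $a$ with $a^2\isdiv M$ yields the asserted isomorphism.

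All of these steps are routine; the one point that calls for care is the bookkeeping of left versus right cosets and actions. With the opposite convention (or the representative $\diag(a,\,M/a)$) one lands on the conjugate group $\Gamma^0(M/a^2) = S\,\Gamma_0(M/a^2)\,S^{-1}$ with $S = \left(\begin{smallmatrix} 0 & -1 \\ 1 & 0\end{smallmatrix}\right)$, rather than on $\Gamma_0(M/a^2)$ itself; since conjugate subgroups have isomorphic induced representations this is harmless. As a consistency check, the resulting identity $\sum_{a^2\isdiv M}[\SL{2}(\ZZ):\Gamma_0(M/a^2)] = \#\Delta_M = \sum_{d\isdiv M} d$ indeed holds.
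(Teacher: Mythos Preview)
Your proof is correct and follows essentially the same route as the paper: recognize $T_M\,\bbone$ as the permutation module $\CC[\Delta_M]$, identify orbits with $\SL{2}(\ZZ)$-double cosets of determinant~$M$ matrices (you via Smith normal form, the paper via ``classical Hecke theory''), and compute the point stabilizer of the diagonal representative to be $\Gamma_0(M/a^2)$. The only cosmetic difference is that the paper verifies one inclusion of the stabilizer directly and then closes up by a cardinality count against the projective line, whereas you obtain both inclusions at once from the conjugation computation; your remark on left/right conventions and the dimension check are welcome sanity additions but not needed for the argument.
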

\begin{proof}
By definition, the representation $T_M\,\bbone$ is given by
\begin{gather}
  \gamma \frake_m = \frake_{m \gamma^{-1}}
\,\tx{,}\quad
  m \in \Delta_M
\,\tx{.}
\end{gather}
To decompose the permutation representation $T_M\,\bbone$ into irreducible components, it suffices to determine the orbits of $\SL{2}(\ZZ)$ acting on $\SL{2}(\ZZ) \Delta_M$ from the right. This is achieved by classical Hecke theory, which says that
\begin{gather*}
  \SL{2}(\ZZ) \Delta_M \SL{2}(\ZZ)
=
  \bigcup_{a;\, a^2 \isdiv M}\,
  \SL{2}(\ZZ)
  \left(\begin{smallmatrix}
    M \slash a & 0 \\ 0 & a
  \end{smallmatrix}\right)
  \SL{2}(\ZZ)
\end{gather*}
is a disjoint union of orbits. It thus suffices to determine the stabilizer of $\SL{2}(\ZZ) \left(\begin{smallmatrix} M \slash a & 0 \\ 0 & a \end{smallmatrix}\right)$ for each~$a$.

Given a positive integer $N = M \slash a^2$, the stabilizer of $\SL{2}(\ZZ)\diag(N,1)$ equals $\Gamma_0(N)$, where $\diag(N,1)$ is the matrix~$\left(\begin{smallmatrix} N & 0 \\ 0 & 1 \end{smallmatrix}\right)$. Indeed, a direct computation shows that the right action of $\Gamma_0(N)$ preserves it. On the other hand, it is known by Hecke theory that the $\SL{2}(\ZZ)$\nbd double coset generated by $\diag(N,1)$ has the same cardinality as the projective line over~$\ZZ \slash N \ZZ$. This shows that the index of the stabilizer in $\SL{2}(\ZZ)$ equals the index of $\Gamma_0(N)$, completing our argument.
\end{proof}

The remaining section builds up on the following purely representation theoretic computation. We set $m_M = \diag(M,1)$, $m'_M = \diag(1,M)$, and $m_{M,b} = \left(\begin{smallmatrix} M & b \\ 0 & M\end{smallmatrix}\right)$. Given $M$ and an exact divisor $N$ of $M$, we write $\gamma_{M,N} \in \SL{2}(\ZZ)$ for some fixed matrix that satisfies $\gamma_{M,N} \equiv \left(\begin{smallmatrix} 0 & -1 \\ 1 & 0 \end{smallmatrix}\right) \pmod{M}$ and $\gamma_{M,N} \equiv \left(\begin{smallmatrix} 1 & 0 \\ 0 & 1 \end{smallmatrix}\right) \pmod{M \slash N}$. To simplify notation, we assume that $\gamma_{M,N}$ is one of the fixed representatives of $\Gamma_0(N) \backslash \SL{2}(\ZZ)$. Further, set $e_M(x) = \exp(2\pi i\, x\slash M)$ for $x \in \CC$, and let $G(\epsilon, e_M(b)) = \sum_{a \pmod{M}} \epsilon(a) e_M(ab)$ be the Gauss sum attached to a Dirichlet character $\epsilon$ modulo~$M$. For $\chi$ a Dirichlet character mod~$N$, we define a (monoid) character on the set of matrices $m = \left(\begin{smallmatrix} a & b \\ c & d \end{smallmatrix}\right)$ with $N \isdiv c$ by $\chi(m) = \chi(d)$.
\begin{proposition}
\label{prop:components-of-hecke-induced}
Let $\chi$ be a Dirichlet character mod~$N$. For a positive integer~$M$, the maps
\begin{alignat}{3}
\label{eq:prop:components-of-hecke-induced:hecke}
&
  \iota_{\rm Hecke}:\,
&
  \rho_\chi &\lra T_M\,\rho_\chi
\,\tx{,}\quad
&
  \frake_\gamma
&\lmto\,
  M^{\frac{k}{2}-1}\hspace{-.4em}
  \sum_{m \in \Delta_M}
  \chi(m) \ov\chi\big( I_{I_2}(I_m(\gamma)) \big)\,
  \frake_{I_m(\gamma)} \otimes \frake_{m\gamma}
\,\tx{,}
\\[.2em]
\label{eq:prop:components-of-hecke-induced:al}
&
  \iota_{\rm AL}:\,
&
  \rho_{\chi'} &\lra T_M\,\rho_\chi
\,\tx{,}\quad
&
  \frake_\gamma
&\lmto\,
  \ov\chi\big( I_{I_2}(\gamma_{M,N} I_{m_M}(\gamma)) \big)\,
  \frake_{\gamma_{M,N} I_{m_M}(\gamma)} \otimes \frake_{m_M \gamma}
\,\tx{,}
\\[.5em]
\label{eq:prop:components-of-hecke-induced:oldform}
&
  \iota_{\rm old}:\,
&
  \rho_{\chi'} &\lra T_M\,\rho_\chi
\,\tx{,}\quad
&
  \frake_{\gamma}
&\lmto\,
  M^{-\frac{k}{2}}
  \ov\chi\big( I_{I_2}(I_{m_M}(\gamma)) \big)\,
  \frake_{I_{m_M}(\gamma)} \otimes \frake_{m_M \gamma}
\,\tx{,}
\\[.5em]
\label{eq:prop:components-of-hecke-induced:principal-congruence}
&
  \iota_{\Gamma(N)}:\,
&
  \rho_{\Gamma(N)} &\lra T_N\,\rho_{\Gamma_1(N^2,N)}
\,\tx{,}\quad
&
  \frake_{\gamma}
&\lmto\,
  M^{\frac{k}{2}}\,
  \frake_{I_{m'_N}(\gamma)} \otimes \frake_{m'_N \gamma}
\,\tx{,}
\\[.53em]
\label{eq:prop:components-of-hecke-induced:twist}
&
  \iota_{\rm twist}:\,
&
  \rho_{\chi'} &\lra T_{M^2}\,\rho_\chi
\,\tx{,}\quad
&
  \frake_{\gamma}
&\lmto\,
  M^{-1}\hspace{-.7em}
  \sum_{b \pmod{M}}\hspace{-.5em}
  G(\epsilon, e_M(-b))
  \ov\chi\big( I_{I_2}(I_{m_{M,b}}(\gamma)) \big)\,
  \frake_{I_{m_{M,b}}(\gamma)} \otimes \frake_{m_{M,b} \gamma}
\,\tx{,}
\\[.2em]
\label{eq:prop:components-of-hecke-induced:id}
&
  \iota_{\rm id}:\,
&
  \rho_{\chi} &\lra T_{M^2}\,\rho_\chi
\,\tx{,}\quad
&
  \frake_\gamma
&\lmto\,
  \frake_\gamma \otimes \frake_{\diag(M,M)}
\end{alignat}
are inclusions under the following circumstances:
\begin{enumerateroman}
\item
The map~\eqref{eq:prop:components-of-hecke-induced:hecke} is an inclusion if~$M$ is coprime to~$N$.

\item
The map~\eqref{eq:prop:components-of-hecke-induced:al} is an inclusion if~$M | N$ and $M$ is coprime to $N \slash M$, and $\chi'$ is the Dirichlet character mod~$N$ that equals $\chi$ mod~$N \slash M$ and that equals $\ov{\chi}$ mod $M$.

\item
The map~\eqref{eq:prop:components-of-hecke-induced:oldform} is an inclusion for $\chi'$ the mod~$MN$ Dirichlet character defined by~$\chi$.

\item
The map~\eqref{eq:prop:components-of-hecke-induced:principal-congruence} is always an inclusion.

\item
The map~\eqref{eq:prop:components-of-hecke-induced:twist} is an inclusion if $\epsilon$ is a Dirichlet character mod~$M$, and $\chi'$ the mod~$N M^2$ Dirichlet character defined by~$\chi \epsilon^2$.

\item
The map~\eqref{eq:prop:components-of-hecke-induced:id} is always an inclusion.
\end{enumerateroman}
\end{proposition}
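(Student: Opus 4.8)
\emph{Strategy.} The plan is to prove all six assertions uniformly by transporting the question into the large induced representation $\wht{\rho_\chi} = \Ind_{\SL{2}(\ZZ)}^{\GL{2}^+(\QQ)}\rho_\chi$ of~\eqref{eq:hecke-operator-as-induced-representation}. By transitivity of induction, $\wht{\rho_\chi}$ is canonically isomorphic to $\Ind_{\Gamma_0(N)}^{\GL{2}^+(\QQ)}\chi$, with basis $\{\frake_g : g \in \Gamma_0(N)\backslash\GL{2}^+(\QQ)\}$; under this isomorphism the inclusion $\iota_{\Ind,M}$ sends $\frake_{\gamma'}\otimes\frake_m \mapsto \frake_{\gamma' m}$, so that $\frake_{I_m(\gamma)}\otimes\frake_{m\gamma} \mapsto \frake_{m\gamma}$ (using $m\gamma = I_m(\gamma)\ov{m\gamma}$) and $\frake_{\gamma_{M,N}I_{m_M}(\gamma)}\otimes\frake_{m_M\gamma} \mapsto \frake_{\gamma_{M,N}m_M\gamma}$. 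Composing each of \eqref{eq:prop:components-of-hecke-induced:hecke}--\eqref{eq:prop:components-of-hecke-induced:id} with $\iota_{\Ind,M}$, I would record that the resulting map $\rho_{\chi'} \to \wht{\rho_\chi}$ takes the shape $\frake_\gamma \mapsto \sum_{g_0 \in G_0} c_{g_0}(\gamma)\,\frake_{g_0\gamma}$, where $G_0$ is $\{\diag(M,1)\}$, $\{\diag(1,N)\}$, $\{\diag(M,M)\}$ in cases \eqref{eq:prop:components-of-hecke-induced:oldform}, \eqref{eq:prop:components-of-hecke-induced:principal-congruence}, \eqref{eq:prop:components-of-hecke-induced:id}; the single Atkin--Lehner matrix $\gamma_{M,N}\diag(M,1)$ attached to the exact divisor $M$ of $N$ in case \eqref{eq:prop:components-of-hecke-induced:al}; all of $\Delta_M$ in case \eqref{eq:prop:components-of-hecke-induced:hecke}; and $\{m_{M,b} : b\bmod M\}$, with $c_{m_{M,b}}(\gamma)$ carrying the Gauss-sum weight $G(\epsilon, e_M(-b))$, in case \eqref{eq:prop:components-of-hecke-induced:twist}. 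The remaining factor in $c_{g_0}(\gamma)$ is the displayed root of unity $\ov\chi(I_{I_2}(\cdots))$ times the indicated power of $M$; in case \eqref{eq:prop:components-of-hecke-induced:id} the composite is the plain identity embedding, reflecting $f\big|_k\diag(M,M) = f$. Since the image visibly lands in $\iota_{\Ind,M}(T_M\rho_\chi)$ (resp.\ $\iota_{\Ind,M^2}(T_{M^2}\rho_\chi)$), it suffices to study this composite, which I denote by $\iota$.

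\emph{Equivariance.} On $\Ind_{\Gamma_0(N)}^{\GL{2}^+(\QQ)}\chi$ the right $\SL{2}(\ZZ)$-action commutes with left multiplication by a fixed $g_0$, so $\SL{2}(\ZZ)$-equivariance of $\frake_\gamma \mapsto c_{g_0}(\gamma)\frake_{g_0\gamma}$ --- hence of the finite sum over $G_0$ --- is automatic, once the scalars are consistent with passage to coset representatives; this consistency is precisely what the character factors and powers of $M$ encode. For a self-contained write-up one can instead verify that $\iota$ intertwines the $\SL{2}(\ZZ)$-actions directly, by the same cocycle manipulation (using $I_m(\gamma_1\gamma_2) = I_m(\gamma_1)I_{m\gamma_1}(\gamma_2)$ and the analogous identity for the $\Gamma_0(N)$-cocycle $I_{I_2}$) that proves $T_M f \in \rmM_k(T_M\rho)$; I would include the latter.

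\emph{Well-definedness.} This is where the hypotheses (i)--(vi) are used. The displayed formulas are written on a fixed system of representatives $\gamma$ of $\Gamma_0(N')\backslash\SL{2}(\ZZ)$, where $N'$ is the modulus of $\chi'$, and one must check that replacing $\gamma$ by $\delta\gamma$ with $\delta\in\Gamma_0(N')$ multiplies the right-hand side by $\chi'(\delta)$. In the translation picture this amounts to the two conditions $g_0\Gamma_0(N')g_0^{-1} \subseteq \Gamma_0(N)$ and $\chi(g_0\delta g_0^{-1}) = \chi'(\delta)$ for $\delta\in\Gamma_0(N')$, required simultaneously for all members of $G_0$. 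For \eqref{eq:prop:components-of-hecke-induced:al} the first condition says $g_0$ normalizes $\Gamma_0(N)$ and the second says conjugation by $g_0$ fixes $\chi$ modulo $N/M$ and sends $\chi$ to $\ov\chi$ modulo $M$, which is exactly the description of $\chi'$ in (ii). For \eqref{eq:prop:components-of-hecke-induced:oldform} they reduce to the congruence computation $\diag(M,1)^{-1}\Gamma_0(N)\diag(M,1)\cap\SL{2}(\ZZ) = \Gamma_0(MN)$ together with the compatibility of $\chi$ with its lift modulo $MN$; case \eqref{eq:prop:components-of-hecke-induced:principal-congruence} is the analogous computation for $\diag(1,N)$, and \eqref{eq:prop:components-of-hecke-induced:id} is trivial since $\diag(M,M)$ is central. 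For \eqref{eq:prop:components-of-hecke-induced:hecke} well-definedness is the standard fact that the $M$-th Hecke operator preserves the pair $(\Gamma_0(N),\chi)$ when $\gcd(M,N)=1$. For \eqref{eq:prop:components-of-hecke-induced:twist} the character identity becomes the classical Gauss-sum computation that $\sum_{b\bmod M} G(\epsilon, e_M(-b))\,\big(f\big|_k m_{M,b}\big)$ is a fixed multiple of the $\epsilon^2$-twist of $f$ --- the one genuinely classical input.

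\emph{Injectivity and main obstacle.} Injectivity I would read off from the same formulas. Fix one $m_0 \in G_0$ (for all cases except \eqref{eq:prop:components-of-hecke-induced:al} it lies in the relevant set of upper-triangular representatives $\Delta_M$, $\Delta_N$ or $\Delta_{M^2}$); then $c_{m_0}(\gamma)$ is a nonzero scalar, and in the Hecke case $m \mapsto \ov{m\gamma}$ is a bijection of $\Delta_M$, so the $\frake_{m_0\gamma}$-coordinate of $\iota(\frake_\gamma)$ comes from exactly one term. Hence $\iota(\frake_\gamma) = \iota(\frake_{\gamma'})$ forces $\ov{m_0\gamma} = \ov{m_0\gamma'}$ and $I_{m_0}(\gamma)I_{m_0}(\gamma')^{-1}\in\Gamma_0(N)$; substituting $\gamma = m_0^{-1}I_{m_0}(\gamma)\ov{m_0\gamma}$ and likewise for $\gamma'$ gives $\gamma\gamma'^{-1}\in m_0^{-1}\Gamma_0(N)m_0\cap\SL{2}(\ZZ)$, which by the congruence computations already used lies in $\Gamma_0(N')$, so $\frake_\gamma = \frake_{\gamma'}$ in $\rho_{\chi'}$; for \eqref{eq:prop:components-of-hecke-induced:al} one argues identically with $g_0$ in place of $m_0$, using that $g_0$ normalizes $\Gamma_0(N)$. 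The projections $\pi_{\rm Hecke}$, $\pi_{\rm AL}$, $\pi_{\rm old}$ of Remark~\ref{rm:components-of-hecke-induced:projections} then appear as explicit one-sided inverses. I expect the only real obstacle to be the bookkeeping in case \eqref{eq:prop:components-of-hecke-induced:twist}: matching the normalization of the Gauss-sum twisting identity (which Gauss sum occurs, how the factor $M^{-1}$ absorbs it) while threading the three cocycles --- $I_m$, the $\Gamma_0(N)$-cocycle $I_{I_2}$, and the $\Gamma_0(N')$-cocycle built into $\rho_{\chi'}$ --- through the computation and keeping the powers of $M$ consistent with~\eqref{eq:def:scalar-product}. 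Once the translation picture of the first paragraph is in place, all remaining cases are formal.
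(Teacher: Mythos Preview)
Your approach is correct and is, in essence, a more systematic version of the paper's ``conceptual'' argument. The paper proceeds differently in presentation: it first verifies cases~\eqref{eq:prop:components-of-hecke-induced:hecke} and~\eqref{eq:prop:components-of-hecke-induced:al} by a direct check of $\iota(\rho(\delta^{-1})\frake_\gamma) = (T_M\rho)(\delta^{-1})\iota(\frake_\gamma)$, unwinding the cocycle identities $I_m(\gamma_1\gamma_2) = I_m(\gamma_1)I_{m\gamma_1}(\gamma_2)$ and $I_{I_2}(\alpha\beta) = I_{I_2}(\alpha)I_\alpha(\beta)$ at length, explicitly to ``illustrate various cocycle relations.'' For the remaining cases (in particular the twist~\eqref{eq:prop:components-of-hecke-induced:twist}) the paper then gives precisely your kind of argument, but phrased dually in terms of function spaces: it fixes $f \in \rmM_k(\chi)$, notes that $\mathrm{twist}_\epsilon(f) = M^{-1}\sum_b G(\ov\epsilon,e_M(b))\,f|_k m_{M,b}$ lies in $\rmM_k(\chi')$, and reads off the inclusion by comparing the $\SL{2}(\ZZ)$-span of $\mathrm{twist}_\epsilon(f)|_k\gamma$ (which realizes $\rho_{\ov\chi'}$) with the larger span of $f|_k\gamma m$ for $m \in \Delta_{M^2}$ (which realizes $T_{M^2}\rho_{\ov\chi}$).

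Your use of $\wht{\rho_\chi} = \Ind_{\SL{2}(\ZZ)}^{\GL{2}^+(\QQ)}\rho_\chi$ --- the very framework the paper sets up in~\eqref{eq:hecke-operator-as-induced-representation} but does not exploit in this proof --- is the representation-theoretic mirror of that function-space argument, and you apply it uniformly to all six cases rather than splitting off~(i) and~(ii). What you gain is cleanliness: once transitivity of induction turns $\frake_{I_m(\gamma)}\otimes\frake_{\ov{m\gamma}}$ into the class of $m\gamma$ in $\Gamma_0(N)\backslash\GL{2}^+(\QQ)$, equivariance really is the trivial observation that left multiplication by $g_0$ commutes with the right $\SL{2}(\ZZ)$-action, and the hypotheses (i)--(vi) enter exactly as the conjugation conditions $g_0\Gamma_0(N')g_0^{-1}\subseteq\Gamma_0(N)$ and $\chi(g_0\,\cdot\,g_0^{-1}) = \chi'$ that you isolate. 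What the paper's direct computation buys is explicitness about how the three cocycles ($I_m$, the $\Gamma_0(N)$-cocycle $I_{I_2}$, and the $\Gamma_0(N')$-cocycle) interlock --- which you correctly flag as the main bookkeeping hazard, especially in case~\eqref{eq:prop:components-of-hecke-induced:twist}. Your proposal would benefit from writing out one case end to end (say~\eqref{eq:prop:components-of-hecke-induced:oldform}) to nail down the sign and power-of-$M$ conventions once, after which the rest are indeed formal.
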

\begin{remark}
\label{rm:components-of-hecke-induced:projections}
Using the scalar product on induced representations and their images under vector-valued Hecke operators (cf.~Lemma~\ref{la:hecke-operator-preserves-unitarity}), we obtain from each inclusion defined in the previous proposition a corresponding projection. More specifically, we define a projection $\pi$ associated to an inclusion by requiring that $\langle \pi(v), w \rangle = \langle v, \iota(w) \rangle$ for all vectors~$v$ and~$w$ in the domain of~$\pi$ and $\iota$, respectively. We denote the resulting projections by
\begin{gather}
\begin{alignedat}{6}
&
  \pi_{\rm Hecke}
:\,&
  T_M\,\rho_\chi &\lra \rho_\chi
\,\tx{,}\quad
&&
  \pi_{\rm AL}
:\,&
  T_M\,\rho_\chi &\lra \rho_\chi
\,\tx{,}\quad
&&
  \pi_{\rm old}
:\,&
  T_M\,\rho_\chi &\lra \rho_{\chi'}
\,\tx{,}\quad
\\
&
  \pi_{\Gamma(N)}
:\,&
  T_N\,\rho_{\Gamma_1(N^2,N)} &\lra \rho_{\Gamma(N)}
\,\tx{,}\quad
&&
  \pi_{\rm twist}
:\,&
  T_M\,\rho_\chi &\lra \rho_{\chi'}
\,\tx{,}\quad
&&
  \pi_{\rm id}
:\,&
  T_{M^2}\,\rho_\chi &\lra \rho_{\chi}
\,\tx{.}
\end{alignedat}
\end{gather}
\end{remark}

\begin{proof}[Proof of Proposition~\ref{prop:components-of-hecke-induced}]
There are at least two possible proofs of Proposition~\ref{prop:components-of-hecke-induced}. We will prove the first two cases by direct computation to illustrate various cocycle relations that enter the calculations. Then we give a more general, representation theoretic argument. Given $\gamma \in \Gamma_0(N) \backslash \SL{2}(\ZZ)$ and $\delta \in \SL{2}(\ZZ)$, we will check by computing the left and right hand side separately that
\begin{gather*}
  \iota\big( \rho(\delta^{-1})\, \frake_\gamma \big) 
=
  \rho'(\delta^{-1})\, \iota(\frake_\gamma)
\,\tx{,}
\end{gather*}
where $\rho$ is the repesentation on the domain of~\eqref{eq:prop:components-of-hecke-induced:hecke}, \eqref{eq:prop:components-of-hecke-induced:al}, etc.,\ $\rho'$ is the corresponding representation on the codomain, and $\iota$ is the inclusion of $\rho$ into $\rho'$.

We verify that~\eqref{eq:prop:components-of-hecke-induced:hecke} is an inclusion if $M$ and $N$ are coprime. Applying the definition of induced representations~$\rho_\chi$ and of $\iota_{\rm Hecke}$, we find that
\begin{align*}
&
  M^{1-\frac{k}{2}}\,
  \iota_{\rm Hecke} \big( \rho_\chi(\delta^{-1}) \frake_\gamma \big)
=
  M^{1-\frac{k}{2}}\,
  \iota_{\rm Hecke} \Big( \chi\big( I^{-1}_\gamma(\delta) \big)\, \frake_{\gamma\delta} \Big)
\\
={}&
  \chi\big( I^{-1}_\gamma(\delta) \big)
  \sum_{m \in \Delta_M}
  \chi(m) \ov\chi\big( I_{I_2}(I_m(\ov{\gamma \delta})) \big)\,
  \frake_{I_m(\ov{\gamma \delta})} \otimes \frake_{m \ov{\gamma \delta}}
\\
={}&
  \sum_{m \in \Delta_M}
  \chi(m) \ov\chi\big( I_\gamma(\delta) I_{I_2}^{-1}(I_m(\ov{\gamma \delta})) \big)\,
  \frake_{I_m(\ov{\gamma \delta})} \otimes \frake_{m \ov{\gamma \delta}}
\\
={}&
  \sum_{m' \in \Delta_M}
  \chi(m') \ov\chi\big( I_{I_2}(I_{m' I_\gamma(\delta)}(\ov{\gamma \delta})) \big)\;
  \frake_{I_{m' I_\gamma(\delta)}(\ov{\gamma \delta})}
  \otimes
  \frake_{m' I_\gamma(\delta) \ov{\gamma \delta}}
\,\tx{.}
\end{align*}
To obtain the last equality, we have replaced $m$ with $m'  I_\gamma(\delta)$. 

On the other hand, we have
\begin{align*}
&
  M^{1 - \frac{k}{2}}\, (T_M\,\rho_\chi)(\delta^{-1}) \big( \iota_{\rm Hecke}(\frake_\gamma) \big)
=
  (T_M\,\rho_\chi)(\delta^{-1})
  \Big(
  \sum_{m \in \Delta_M}
  \chi(m)
  \ov\chi\big( m I_{I_2}(I_m(\ov\gamma)) \big)\,
  \frake_{I_m(\ov\gamma)} \otimes \frake_{m \ov\gamma}
  \Big)
\\
={}&
  \sum_{m \in \Delta_M}\!\!
  \chi(m)
  \ov\chi\big( m I_{I_2}(I_m(\ov\gamma)) \big)\,
  \big( \rho_\chi \big( I_{m \ov\gamma}^{-1}(\delta) \big) \frake_{I_m(\ov\gamma)} \big)
  \otimes \frake_{m \ov\gamma \delta}
\\
={}&
  \sum_{m \in \Delta_M}\!\!
  \chi(m)
  \ov\chi\big( m I_{I_2}(I_m(\ov\gamma)) \big)\,
  \ov\chi\big( I_{I_m(\ov\gamma)}( I_{m \ov\gamma}(\delta) ) \big)\,
  \frake_{I_m(\ov\gamma) I_{m \ov\gamma}(\delta)}
  \otimes
  \frake_{m \ov\gamma \delta}
\\
={}&
  \sum_{m \in \Delta_M}
  \chi(m)
  \ov\chi\big( m I_{I_2}(I_m( \ov{\gamma} \delta)) \big)\,
  \frake_{I_m( \ov\gamma \delta)} \otimes \frake_{m \ov\gamma \delta}
\,\tx{.}
\end{align*}
To obtain the last equality we apply the cocycle relation to the subscript of the first tensor component, and the relation
\begin{gather*}
  I_{I_2}\big( I_m(\ov\gamma) \big)\, I_{I_m(\ov\gamma)} \big( I_{m \ov\gamma} (\delta) \big)
=
  I_{I_2}\big( I_m(\ov\gamma) I_{m\ov\gamma}(\delta) \big)
=
  I_{I_2}\big( I_m(\ov\gamma \delta) \big)
\end{gather*}
to the argument of the character.

To finish the case of~\eqref{eq:prop:components-of-hecke-induced:hecke}, we only need to show equality between the arguments of the character~$\chi$ and the subscripts of the tensor components. That is, we have to establish that
\begin{gather*}
  I_{m I_\gamma(\delta)}(\ov{\gamma \delta})
=
  I_m(\ov\gamma \delta)
\quad\tx{and}\quad
  m I_\gamma(\delta)\ov{\gamma\delta}
=
  m \ov{\gamma}\delta
\tx{.}
\end{gather*}
The second equality is a simple consequence of the defining formula of $I_\gamma(\delta)$. The first equality follows from the second one and from
\begin{gather*}
  I_m (\ov\gamma \delta)\,
  \ov{m I_\gamma(\delta) \ov{\gamma \delta}}
=
  I_m(\ov\gamma \delta)\, \ov{m \ov\gamma \delta}
=
  m \ov\gamma \delta
\tx{.}
\end{gather*}

To establish the case of~\eqref{eq:prop:components-of-hecke-induced:al}, consider the following equality, baring in mind that $\ov\gamma = \gamma$.
\begin{align*}
&
  (T_M \rho_\chi)(\delta^{-1})\big( \iota_{\rm AL}(\frake_\gamma) \big)
=
  (T_M \rho_\chi)(\delta^{-1})\Big(
  \ov\chi\big( I_{I_2}(\gamma_{M,N} I_{m_M}(\ov\gamma)) \big)\,
  \frake_{\gamma_{M,N} I_{m_M}(\ov\gamma)} \otimes \frake_{m_M \ov\gamma}
  \Big)
\\
={}&
  \ov\chi\big( I_{I_2}\big( \gamma_{M,N} I_{m_M}(\ov\gamma) \big) \big)\,
  \big( \rho_\chi\big( I^{-1}_{m_M \ov\gamma}(\delta) \big)\,
  \frake_{\gamma_{M,N} I_{m_M}(\ov\gamma)}
  \otimes
  \frake_{m_M \ov\gamma \delta}
\\
={}&
  \ov\chi\big( I_{I_2}\big( \gamma_{M,N} I_{m_M}(\ov\gamma) \big) \big)\,
  \ov\chi\big( I_{\gamma_{M,N} I_{m_M}(\ov\gamma)}\big( I_{m_M \ov\gamma}(\delta) \big) \big)
  \frake_{\gamma_{M,N} I_{m_M}(\ov\gamma) I_{m_M \ov\gamma}(\delta)}\,
  \otimes
  \frake_{m_M \ov\gamma \delta}
\\
={}&
  \ov\chi\big( I_{I_2}\big( \gamma_{M,N} I_{m_M}(\ov\gamma \delta) \big) \big)\,
  \frake_{\gamma_{M,N} I_{m_M}(\ov\gamma \delta)}
  \otimes
  \frake_{m_M \ov\gamma \delta}
\,\tx{.}
\end{align*}
The argument of the character simplifies, because
\begin{gather*}
  I_{I_2}\big( \gamma_{M,N} I_{m_M}(\ov\gamma) \big)
  I_{\gamma_{M,N} I_{m_M}(\ov\gamma)} \big( I_{m_M \ov\gamma}(\delta) \big)
=
  I_{I_2}\big( \gamma_{M,N} I_{m_M}(\ov\gamma) I_{m_M \ov\gamma}(\delta) \big)
=
  I_{I_2}\big( \gamma_{M,N} I_{m_M}(\ov\gamma \delta) \big)
\end{gather*}
by applying the cocycle relation twice.

Comparing that expression to
\begin{gather*}
  \iota_{\rm AL}\big( \rho_{\chi'}(\delta^{-1})\, \frake_{\gamma} \big)
=
  \iota_{\rm AL}\big( \ov\chi'\big( I_\gamma(\delta) \big)\, \frake_{\gamma \delta} \big)
=
  \ov\chi'\big( I_\gamma(\delta) \big)
  \ov\chi\big(I_{I_2} \big( \gamma_{M,N} I_{m_M}(\ov{\gamma\delta}) \big) \big)\,
  \frake_{\gamma_{M,N} I_{m_M}(\ov{\gamma\delta})} \otimes \frake_{m_M \ov{\gamma \delta}}
\end{gather*}
we need to show that
\begin{gather}
\label{eq:components-of-hecke-induced:al-character}
  \chi\Big(
  I_{I_2}\big( \gamma_{M,N} I_{m_M}(\ov\gamma \delta) \big)
  \Big)
=
  \chi'\big( I_\gamma(\delta) \big)\,
  \chi\Big(
  I_{I_2}\big( \gamma_{M,N} I_{m_M}(\ov{\gamma \delta}) \big)
  \Big)
\quad\tx{and}\quad
  m_M \ov\gamma \delta
=
  m_M \ov{\gamma \delta}
\tx{.}
\end{gather}
Since $\ov\gamma \delta = I_\gamma(\delta) \ov{\gamma \delta}$ with $I_\gamma(\delta) \in \Gamma_0(N)$, we immediately obtain the second equality: Indeed, by what is argued in the proof of Proposition~\ref{prop:components-of-hecke-induced}, $\Gamma_0(N)$ stabilizes $m_M$, since we have $M \isdiv N$.

It is more involved to illustrate the first auxiliary equality. The cocylce equality and $\ov{m I_\gamma(\delta)} = \ov{m}$ imply that
\begin{gather*}
  I_{m_M}(\ov\gamma \delta)
=
  I_{m_M I_\gamma(\delta)}(\ov{\gamma \delta})
=
  I_{m_M} \big( I_\gamma(\delta) \big)
  I_{m_M}(\ov{\gamma \delta})
\,\tx{.}
\end{gather*}
We therefore have to analyze
\begin{gather*}
  \chi\Big(
  I_{I_2}\big( \gamma_{M,N} I_{m_M}(\ov\gamma \delta) \big)
  \Big)
=
  \chi\Big(
  I_{I_2}\big( \gamma_{M,N}
               I_{m_M} \big( I_\gamma(\delta) \big)
               I_{m_M}(\ov{\gamma \delta})
  \big) \Big)
\tx{.}
\end{gather*}

Since we are interested only in a value of~$\chi$, which is a Dirichlet character mod~$N$, it suffices to consider its argument mod~$N$. Additionally, because $I_{I_2}$ is a cocycle for the action of $\SL{2}(\ZZ)$ on the cosets $\Gamma_0(N) \backslash \SL{2}(\ZZ)$, its value mod $N$ depends only on its argument mod~$N$. Therefore, all the following computations can be performed modulo~$N$. Since further $M$ and $N \slash M$ are coprime, it indeed suffices to compute mod~$M$ and mod~$N \slash M$ separately.

We first consider values mod $N \slash M$, which is easier since $\gamma_{M,N} \equiv I_2 \pmod{N \slash M}$. We have
\begin{gather*}
  I_{I_2}\big( \gamma_{M,N}
               I_{m_M} \big( I_\gamma(\delta) \big)
               I_{m_M}(\ov{\gamma \delta})
  \big)
\equiv
  I_{I_2}\big( I_{m_M} \big( I_\gamma(\delta) \big)
               I_{m_M}(\ov{\gamma \delta})
  \big)
\equiv
  I_{m_M} \big( I_\gamma(\delta) \big)
  I_{I_2}\big( I_{m_M}(\ov{\gamma \delta}) \big)
\tx{.}
\end{gather*}
The last congruence deserves further justification. However, we first finish the computation modulo~$N \slash M$. Namely, we have to compare $\chi\big( I_{m_M} \big( I_\gamma(\delta) \big) \big)$ and $\chi\big( I_\gamma(\delta) \big)$. We have already noticed that $I_\gamma(\delta)$ stabilizes $m_M$. Therefore $I_{m_M} \big( I_\gamma(\delta) \big) = m_M I_\gamma(\delta) m_M^{-1}$. The diagonal entries of $m_M I_\gamma(\delta) m_M^{-1}$ coincides with the ones of $I_\gamma(\delta)$. Since $\chi$ and $\chi'$ coincide mod~$N \slash M$, this finishes the mod~$N \slash M$ computations for~\eqref{eq:components-of-hecke-induced:al-character}.

Let us now argue that the last equality in the above equation holds. From the previously stated expression for~$I_{m_M} \big( I_\gamma(\delta) \big)$ we see that it lies in $\Gamma_0(N \slash M)$. Among other things, this implies that
\begin{gather*}
  \ov{ I_{m_M} \big( I_\gamma(\delta) \big) I_{m_M}(\ov{\gamma\delta}) }
=
  \ov{ I_{m_M}(\ov{\gamma\delta}) }
\tx{.}
\end{gather*}
By definition of the cocyles, we have
\begin{gather*}
  I_{I_2}\big(
  I_{m_M} \big( I_\gamma(\delta) \big)
  I_{m_M}( \ov{\gamma\delta} )
  \big)
  \ov{ I_{m_M}(\ov{\gamma\delta}) }
\equiv
  I_{I_2}\big(
  I_{m_M} \big( I_\gamma(\delta) \big)
  I_{m_M}( \ov{\gamma\delta} )
  \big)
  \ov{ I^{-1}_{m_M} \big( I_\gamma(\delta) \big)
       I_{m_M}(\ov{\gamma\delta}) }
\equiv
  I_{m_M} \big( I_\gamma(\delta) \big)
  I_{m_M}( \ov{\gamma\delta} )
\end{gather*}
and
\begin{gather*}
  I_{I_2}\big(
  I_{m_M}( \ov{\gamma\delta} )
  \big)
  \ov{ I_{m_M}(\ov{\gamma\delta}) }
\equiv
  I_{m_M}( \ov{\gamma\delta} )
\tx{.}
\end{gather*}
Combining both equalities, we obtain
\begin{gather*}
  I_{m_M} \big( I_\gamma(\delta) \big)
  I_{I_2}\big(
  I_{m_M} \big( I_\gamma(\delta) \big)
  I_{m_M}( \ov{\gamma\delta} )
  \big)
\equiv
  I_{I_2}\big(
  I_{m_M}( \ov{\gamma\delta} )
  \big)
\tx{,}
\end{gather*}
which yields precisely the relation that we have employed above. This finishes our computations mod~$N \slash M$.

Computations mod~$M$ depends on $\gamma_{M,N} \equiv \left(\begin{smallmatrix} 0 & -1 \\ 1 & 0 \end{smallmatrix}\right)$, $\gamma_{M,N} I_{m_M} \big( I_\gamma(\delta) \big) \in \Gamma_0(M)$, and on $\chi'$ being the inverse of $\chi$ mod~$M$. The rest of the considerations is the same as before. We leave details to the reader, and finish discussion of~\eqref{eq:prop:components-of-hecke-induced:al}.

The remaining cases~\eqref{eq:prop:components-of-hecke-induced:oldform}, \eqref{eq:prop:components-of-hecke-induced:principal-congruence}, \eqref{eq:prop:components-of-hecke-induced:twist}, and \eqref{eq:prop:components-of-hecke-induced:id} follow a similar pattern, but in particular~\eqref{eq:prop:components-of-hecke-induced:twist} would be very involved to establish. We therefore give an alternative proof, which is more conceptual, but allows for less insight into what transformations show up.

Consider the case~\eqref{eq:prop:components-of-hecke-induced:twist}, and fix $\chi$ and $\chi'$ as in the assumptions. For any $k \in \ZZ$ and any $f \in \rmM_k(\chi)$, we have, as in the proof of Proposition~\ref{prop:twists-of-modular-forms} that
\begin{gather*}
  \mathrm{twist}_\epsilon(f)
=
  M^{-1}
  \sum_b G(\ov\epsilon, e_M(b))
  f \big|_k\, m_{M,b}
\in
  \rmM_k(\chi')
\tx{.}
\end{gather*}
Comparing with the definition of induction of represenations in~\eqref{eq:def:induction-of-representations}, the space
\begin{gather*}
  \lspan \big\{
  \mathrm{twist}_\epsilon(f) \big|_k\, \gamma \,:\, \gamma \in \SL{2}(\ZZ)
  \big\}
\end{gather*}
with left representation $\gamma g = g \big|_k\, \gamma^{-1}$ is isomorphic to $\Ind\, \rho_{\ov\chi'}$, since $f \in \rmM_k(\chi')$. As a space of functions, it coincides with
\begin{gather*}
  \lspan \big\{
  \big( M^{-1}
  \sum_b G(\ov\epsilon, e_M(b))
  f \big|_k\, m_{M,b}
  \big) \big|_k\, \gamma \,:\, \gamma \in \SL{2}(\ZZ)
  \big\}
\tx{.}
\end{gather*}
The latter is a subspace of 
\begin{gather*}
  \lspan \big\{
  f \big|_k\, \gamma m \,:\, \gamma \in \SL{2}(\ZZ), m \in \Delta_{M^2}
  \big\}
\tx{,}
\end{gather*}
which is isomorphic to $T_{M^2}\, \rho_{\ov\chi}$ when considered as a space with left representation $\gamma g = g \big|_k\, \gamma^{-1}$. The embedding is given via the conjugate of~\eqref{eq:prop:components-of-hecke-induced:twist}, which proves the proposition.
\end{proof}

\subsubsection{The identity map}

For later use, we have to recover $f$ from $T_{M^2}\, f$.
\begin{lemma}
\label{la:identity-in-hecke-operator-square}
Let $\chi$ be a Dirichlet character mod~$N$.  Fix a positive integer $M$. The inclusion $\iota_{\rm id}$ and the corresponding projection~$\pi_{\rm id}$ intertwine the vector valued Hecke operator~$T_{M^2}$ and the identity map with induction from~$\Gamma_0(N)$.  For every $f \in \rmM_k(\chi)$ and $v \in V(\rho_\chi)$, we have
\begin{gather*}
  \big\langle \Ind(f),\, v \big\rangle
=
  \Big\langle \pi_{\rm id}\big( T_{M^2}\,\Ind(f) \big),\, v \Big\rangle
=
  \Big\langle T_{M^2}\,\Ind(f),\, \iota_{\rm id}(v) \Big\rangle
\tx{.}
\end{gather*}
\begin{gather*}
\end{gather*}
\end{lemma}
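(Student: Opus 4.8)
The plan is to verify the two displayed equalities essentially by inspection, the only real ingredient being that $\diag(M,M) = M \cdot I_2$ belongs to $\Delta_{M^2}$ and acts trivially under the weight-$k$ slash operation. Concretely, for any holomorphic function $F$ on $\HS$ (scalar- or $V(\rho_\chi)$-valued, the latter acted on component-wise) one has $F \big|_k \diag(M,M) = F$, because the M\"obius transformation attached to $M \cdot I_2$ is the identity and the scaling factor $(M \slash M)^{k \slash 2}$ equals~$1$. This is precisely why the lemma is phrased with $T_{M^2}$ rather than $T_M$, and I would record this observation first, applied to $F = \Ind(f)$.

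The middle equality is purely formal. By Remark~\ref{rm:components-of-hecke-induced:projections}, $\pi_{\rm id}$ is, fiberwise, the adjoint of $\iota_{\rm id}$ with respect to the scalar product on $\rho_\chi$ and the scalar product on $T_{M^2}\,\rho_\chi$ from Lemma~\ref{la:hecke-operator-preserves-unitarity}; the map $\iota_{\rm id}$ is already known to be a morphism of representations by Proposition~\ref{prop:components-of-hecke-induced}, case~\eqref{eq:prop:components-of-hecke-induced:id}. Evaluating the modular form $T_{M^2}\,\Ind(f)$ at a point $\tau \in \HS$ and applying $\langle \pi_{\rm id}(w),\, v \rangle = \langle w,\, \iota_{\rm id}(v) \rangle$ with the vector $w = \big( T_{M^2}\,\Ind(f) \big)(\tau)$ yields $\big\langle \pi_{\rm id}\big( T_{M^2}\,\Ind(f) \big),\, v \big\rangle = \big\langle T_{M^2}\,\Ind(f),\, \iota_{\rm id}(v) \big\rangle$ as an identity of functions of $\tau$, for every $v \in V(\rho_\chi)$.

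It then remains to prove $\langle \Ind(f),\, v \rangle = \big\langle T_{M^2}\,\Ind(f),\, \iota_{\rm id}(v) \big\rangle$, and by linearity it suffices to treat $v = \frake_{\gamma_0}$ for one of the fixed representatives $\gamma_0$ of $\Gamma_0(N) \backslash \SL{2}(\ZZ)$, so that $\iota_{\rm id}(v) = \frake_{\gamma_0} \otimes \frake_{\diag(M,M)}$. Since $T_{M^2}\,\Ind(f) = \sum_{m \in \Delta_{M^2}} \big( \Ind(f) \big|_k m \big) \otimes \frake_m$ and the vectors $\frake_m$, $m \in \Delta_{M^2}$, are orthonormal, pairing against $\frake_{\gamma_0} \otimes \frake_{\diag(M,M)}$ picks out the single term $m = \diag(M,M)$ and gives $\big\langle \Ind(f) \big|_k \diag(M,M),\, \frake_{\gamma_0} \big\rangle = \big\langle \Ind(f),\, \frake_{\gamma_0} \big\rangle$, where the last step is the observation of the first paragraph. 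This is exactly $\langle \Ind(f),\, v \rangle$; since the identity holds for all $v$, it also records the asserted intertwining $\pi_{\rm id} \circ T_{M^2} \circ \Ind = \Ind$.

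I do not expect a genuine obstacle here: the content is bookkeeping. The points to handle carefully are that the scalar product used on $T_{M^2}\,\rho_\chi$ is the one of Lemma~\ref{la:hecke-operator-preserves-unitarity}, so that distinct basis vectors $\frake_\gamma \otimes \frake_m$ are orthogonal and exactly one summand survives; that $\iota_{\rm id}$ is already established to be an inclusion of representations, so that $\pi_{\rm id}$ and its adjunction property are available; and the elementary but essential fact that $\diag(M,M) \in \Delta_{M^2}$ acts as the identity under $\big|_k$, which is the raison d'\^etre of the operator $T_{M^2}$ in this statement.
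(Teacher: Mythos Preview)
Your proposal is correct and follows essentially the same approach as the paper: both arguments reduce to the observation that $\diag(M,M) \in \Delta_{M^2}$ acts trivially under $\big|_k$, use the adjunction between $\iota_{\rm id}$ and $\pi_{\rm id}$ for the middle equality, and then pick out the single surviving term $\frake_\gamma \otimes \frake_{\diag(M,M)}$ in the pairing. The only cosmetic difference is that the paper checks the identity for $v = \frake_{I_2}$ alone, relying on the fact that $\iota_{\rm id}$ is a morphism of representations (so equality at the trivial coset forces equality everywhere), whereas you verify it directly for every basis vector $\frake_{\gamma_0}$; both are equally valid and the underlying computation is identical.
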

\begin{proof}
In light of Proposition~\ref{prop:components-of-hecke-induced} and the definition of $\pi_{\rm id}$ in terms of $\iota_{\rm id}$, it suffices to check that the left and right hand side agree for $v = \frake_{I_2}$. We have $\big\langle \Ind(f),\, \frake_{I_2} \big\rangle = f$ and
\begin{gather*}
  \big\langle T_{M^2}\,\Ind(f),\, \iota_{\rm id}(\frake_{I_2}) \big\rangle
=
  \big\langle T_{M^2}\,\Ind(f),\, \frake_{I_2} \otimes \frake_{\diag(M,M)} \big\rangle
=
  f \big|_k\, \diag(M,M)
=
  f
\tx{.}
\end{gather*}
\end{proof}

\subsubsection{Classical Hecke operators}

One obviously wants to recover classical Hecke operators from Definition~\ref{def:hecke-operator-on-modular-forms}.  Given $f \in \rmM_k(\chi)$, let $f \big| T_M$ be its image under the classical Hecke operator:
\begin{gather*}
  \big( f\big|_{k, \chi} T_M \big) (\tau)
=
  M^{k-1}
  \sum_{\substack{d \isdiv M \\ 0 \le b < d}}
  d^{-k}\, \ov{\chi}(d)\, f\Big( \frac{Md^{-1}\, \tau + b}{d} \Big)
\text{.}  
\end{gather*}
\begin{proposition}
\label{prop:classical-hecke-operator}
Let $\chi$ be a Dirichlet character mod~$N$.  Fix a positive integer $M$ that is coprime to $N$.  The inclusion $\iota_{\rm Hecke}$ and the corresponding projection~$\pi_{\rm Hecke}$ intertwine the vector valued Hecke operator and the classical Hecke operator with the induction map. For every $f \in \rmM_k(\chi)$ and $v \in V(\rho_\chi)$, we have
\begin{gather*}
  \big\langle \Ind\big( f \big|_{k,\chi}\, T_M \big),\, v \big\rangle
=
  \Big\langle \pi_{\rm Hecke}\big( T_M\,\Ind(f) \big),\, v \Big\rangle
=
  \Big\langle T_M\,\Ind(f),\, \iota_{\rm Hecke}(v) \Big\rangle
\text{.}
\end{gather*}
\end{proposition}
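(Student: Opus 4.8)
The plan is to reduce the two claimed equalities to a single explicit evaluation at the basis vector $\frake_{I_2}$. The second equality is purely formal: by Remark~\ref{rm:components-of-hecke-induced:projections} the projection $\pi_{\rm Hecke}$ is defined to be the adjoint of $\iota_{\rm Hecke}$ with respect to the scalar products of Lemma~\ref{la:hecke-operator-preserves-unitarity}, so $\big\langle \pi_{\rm Hecke}(T_M\,\Ind(f)),\, v \big\rangle = \big\langle T_M\,\Ind(f),\, \iota_{\rm Hecke}(v) \big\rangle$ holds for every $v \in V(\rho_\chi)$. It therefore remains to prove the first equality $\big\langle \Ind(f\big|_{k,\chi} T_M),\, v \big\rangle = \big\langle T_M\,\Ind(f),\, \iota_{\rm Hecke}(v) \big\rangle$. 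Both sides depend antilinearly on~$v$; since $\rho_\chi$ is generated as an $\SL{2}(\ZZ)$\nbd representation by~$\frake_{I_2}$ (because $\chi$ never vanishes on $\Gamma_0(N)$, the vectors $\rho_\chi(\gamma)\frake_{I_2}$ run through all basis vectors up to nonzero scalars), since $\iota_{\rm Hecke}$ is a homomorphism of representations by Proposition~\ref{prop:components-of-hecke-induced}(i), and since $\langle g,\,\sigma(\gamma)w\rangle = \langle g,\,w\rangle\big|_k\gamma^{-1}$ for any modular form $g$ of type~$\sigma$, it suffices to verify the identity for $v = \frake_{I_2}$, exactly as in the proof of Lemma~\ref{la:identity-in-hecke-operator-square}.

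For the left-hand side, $g \mapsto \langle g,\,\frake_{I_2}\rangle$ inverts the induction isomorphism, so $\big\langle \Ind(f\big|_{k,\chi} T_M),\, \frake_{I_2} \big\rangle = f\big|_{k,\chi} T_M$. For the right-hand side I would first simplify $\iota_{\rm Hecke}(\frake_{I_2})$: for $m \in \Delta_M$ one has $\ov m = m$ and $I_m(I_2) = I_2$, hence $I_{I_2}(I_m(I_2)) = I_2$ and the $\ov\chi$\nbd factor equals~$1$, so that
\begin{gather*}
  \iota_{\rm Hecke}(\frake_{I_2})
=
  M^{\frac{k}{2}-1} \sum_{m \in \Delta_M} \chi(m)\, \frake_{I_2} \otimes \frake_m
\,\tx{.}
\end{gather*}
Next I would expand $T_M\,\Ind(f) = \sum_{m' \in \Delta_M} \sum_{\gamma} \big( f\big|_k \gamma m' \big) \otimes \frake_\gamma \otimes \frake_{m'}$, where $\gamma$ runs over the fixed representatives of $\Gamma_0(N)\backslash\SL{2}(\ZZ)$ and we use that the classical slash operator is a right action of $\GL{2}^+(\QQ)$. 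Pairing this against $\iota_{\rm Hecke}(\frake_{I_2})$ and invoking orthonormality of the $\frake_\gamma \otimes \frake_{m'}$, only the terms with $\gamma = I_2$ and $m' = m$ survive, yielding
\begin{gather*}
  \big\langle T_M\,\Ind(f),\, \iota_{\rm Hecke}(\frake_{I_2}) \big\rangle
=
  M^{\frac{k}{2}-1} \sum_{m \in \Delta_M} \ov{\chi(m)}\, \big( f\big|_k m \big)
\,\tx{.}
\end{gather*}

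It then remains to match this with the classical formula. Writing $m = \left(\begin{smallmatrix} a & b \\ 0 & d \end{smallmatrix}\right) \in \Delta_M$, so that $ad = M$, $0 \le b < d$ and $\chi(m) = \chi(d)$, one has $\big( f\big|_k m \big)(\tau) = (a/d)^{k/2} f\big((a\tau+b)/d\big)$, and $a = M/d$ gives $(a/d)^{k/2} = M^{k/2} d^{-k}$; hence $M^{\frac{k}{2}-1}\, \ov\chi(d)\, (a/d)^{k/2} = M^{k-1} d^{-k}\, \ov\chi(d)$, and summing over $\Delta_M$ reproduces precisely $f\big|_{k,\chi} T_M$ as defined above. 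There is no serious obstacle here: the argument is a verification, and the only points deserving care are the cocycle evaluation $I_m(I_2) = I_2$ for $m \in \Delta_M$ and the bookkeeping of the coset sum after pairing. Note that the hypothesis $\gcd(M,N) = 1$ is not used directly in this computation—it enters only upstream, through Proposition~\ref{prop:components-of-hecke-induced}(i), which is what makes $\iota_{\rm Hecke}$ an actual morphism of $\SL{2}(\ZZ)$\nbd representations and hence makes the statement meaningful.
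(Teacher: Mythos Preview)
Your proof is correct and follows essentially the same approach as the paper's own proof: reduce to $v = \frake_{I_2}$ (just as in the proof of Lemma~\ref{la:identity-in-hecke-operator-square}), simplify $\iota_{\rm Hecke}(\frake_{I_2})$ using $I_m(I_2)=I_2$, expand $T_M\,\Ind(f)$, and pair to obtain $M^{\frac{k}{2}-1}\sum_m \ov\chi(m)\,f\big|_k m$. Your write-up is in fact more detailed than the paper's, which compresses the reduction step and the cocycle evaluation into a single reference to Lemma~\ref{la:identity-in-hecke-operator-square}; your observation that the coprimality hypothesis enters only through Proposition~\ref{prop:components-of-hecke-induced}(i) is also a useful clarification not made explicit in the paper.
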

\begin{proof}
As in the proof of Lemma~\ref{la:identity-in-hecke-operator-square}, we can focus on the case of $v = \frake_{I_2}$. The left hand side then equals $f \big|_{k,\chi}\, T_M$, and the right hand side is
\begin{multline*}
  \big\langle T_M\,\Ind(f),\, \iota_{\rm Hecke}(\frake_{I_2}) \big\rangle
=
  \Big\langle
  \sum_{m, \gamma} \big( f \big|_k\, \gamma m \big)\, \frake_\gamma \otimes \frake_m,\,
  M^{\frac{k}{2}-1} \sum_{m \in \Delta_M} \chi\big( m \big)\, \frake_{I_2} \otimes \frake_{m}
  \Big\rangle
\\
=
  M^{\frac{k}{2}-1} \sum_m \ov{\chi}(m)\, f \big|_k\, m
=
  \Big\langle \sum_\gamma \big( f \big|_{k,\chi}\, T_M \gamma \big)\, \frake_\gamma ,\, \frake_{I_2} \Big\rangle
\,\tx{.}
\end{multline*}
\end{proof}

\subsubsection{Atkin--Lehner involutions}

Atkin--Lehner involutions for level~$N$ modular forms, which are defined under the assumptions of the next Proposition~\ref{prop:classical-atkin-lehner-operator}, map $f \in \rmM_k(N)$ to
\begin{gather*}
  W_M(f)
=
  f \big|_{k}\, \gamma_{M,N} m_M
\text{.}
\end{gather*}
\begin{proposition}
\label{prop:classical-atkin-lehner-operator}
Let $\chi$ be a Dirichlet character mod~$N$.  Fix a positive integer $M \isdiv N$ such that $M$ and $N \slash M$ are coprime. Let $\chi'$ be the Dirichlet character mod~$N$ that equals $\chi$ mod~$N \slash M$ and that equals $\ov\chi$ mod~$M$. The inclusion $\iota_{\rm AL}$ and the corresponding projection~$\pi_{\rm AL}$ intertwine the Atkin Lehner operator and Hecke operator with the induction map.  For $f \in \rmM_k(\Gamma_0(N), \chi)$ and $v \in V(\rho_\chi)$, we have
\begin{gather*}
  \big\langle \Ind_{\Gamma_0(N)}\, W_M(f),\, v \big\rangle
=
  \Big\langle \pi_{\rm AL}\big( T_M\, \Ind(f) \big),\, v \Big\rangle
=
  \Big\langle T_M\, \Ind(f),\, \iota_{\rm AL}(v) \Big\rangle
\text{.}
\end{gather*}
\end{proposition}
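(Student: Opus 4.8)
The plan is to follow the pattern of the proofs of Lemma~\ref{la:identity-in-hecke-operator-square} and Proposition~\ref{prop:classical-hecke-operator}. First I would invoke Proposition~\ref{prop:components-of-hecke-induced}(ii), which under the stated hypotheses makes $\iota_{\rm AL}$ a homomorphism of representations, together with Remark~\ref{rm:components-of-hecke-induced:projections}, which defines $\pi_{\rm AL}$ as its adjoint; the identity $\langle\pi_{\rm AL}(w),v\rangle=\langle w,\iota_{\rm AL}(v)\rangle$ then makes the first of the two equalities in the statement automatic. Equivariance of $\iota_{\rm AL}$, combined with the modular transformation laws of $T_M\,\Ind(f)$ and of $\Ind_{\Gamma_0(N)}W_M(f)$, reduces the remaining equality to the single basis vector $v=\frake_{I_2}$, exactly as in the two cited proofs.

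Next I would evaluate both sides at $v=\frake_{I_2}$. On the left, the explicit inverse of the induction map gives $\langle\Ind_{\Gamma_0(N)}W_M(f),\frake_{I_2}\rangle=W_M(f)=f\big|_k\gamma_{M,N}m_M$; here one needs the classical Atkin--Lehner fact that $W_M$ carries $\rmM_k(\Gamma_0(N),\chi)$ into $\rmM_k(\Gamma_0(N),\chi')$, so that the induction on the left is taken with respect to~$\chi'$. On the right, I would expand $T_M\,\Ind(f)=\sum_{\gamma}\sum_{m\in\Delta_M}\big(f\big|_k\gamma m\big)\,\frake_\gamma\otimes\frake_m$, the sum over the fixed coset representatives $\gamma$ of $\Gamma_0(N)\backslash\SL{2}(\ZZ)$. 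Since $I_{m_M}(I_2)=I_2$ (a cocycle is trivial at the identity), the defining formula~\eqref{eq:prop:components-of-hecke-induced:al} for $\iota_{\rm AL}$ specializes to $\iota_{\rm AL}(\frake_{I_2})=\ov\chi\big(I_{I_2}(\gamma_{M,N})\big)\,\frake_{\gamma_{M,N}}\otimes\frake_{m_M}$, and because $\gamma_{M,N}$ was chosen among the fixed representatives we have $I_{I_2}(\gamma_{M,N})=I_2$, so this scalar equals~$1$. Pairing against the orthonormal $\frake$-basis of Lemma~\ref{la:hecke-operator-preserves-unitarity} then extracts the $(\gamma_{M,N},m_M)$ component of $T_M\,\Ind(f)$, namely $f\big|_k\gamma_{M,N}m_M$, which matches the left-hand side.

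The genuinely new content of Proposition~\ref{prop:components-of-hecke-induced}(ii) — equivariance of $\iota_{\rm AL}$ and the compatibility of $\chi$ and $\chi'$ modulo $M$ and $N\slash M$ — is already in hand, so for this proposition the only thing to watch is the character bookkeeping at $v=\frake_{I_2}$: that $W_M(f)$ transforms by $\chi'$ (classical) and that the scalar in $\iota_{\rm AL}(\frake_{I_2})$ collapses to~$1$, which rests entirely on the normalization of $\gamma_{M,N}$ fixed just before Proposition~\ref{prop:components-of-hecke-induced}. I do not expect a serious obstacle; the one point requiring mild care is making the reduction to $v=\frake_{I_2}$ fully rigorous in the presence of the twist, for which the load-bearing fact is that $\Gamma_0(N)$ stabilizes $m_M$ (true since $M\isdiv N$, as already observed inside the proof of Proposition~\ref{prop:components-of-hecke-induced}).
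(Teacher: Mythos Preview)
Your proposal is correct and follows essentially the same approach as the paper's own proof: reduce to $v=\frake_{I_2}$ by equivariance (citing the analogous Proposition~\ref{prop:classical-hecke-operator}), then compute $\iota_{\rm AL}(\frake_{I_2})=\frake_{\gamma_{M,N}}\otimes\frake_{m_M}$ using $I_{m_M}(I_2)=I_2$ and $I_{I_2}(\gamma_{M,N})=I_2$, and pair to obtain $f\big|_k\,\gamma_{M,N}m_M$. If anything, your write-up is more explicit than the paper's, which compresses the whole argument into a single displayed line.
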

\begin{proof}
In analogy with the proof of Proposition~\ref{prop:classical-hecke-operator} it suffices to note that for $v = \frake_{I_2}$ the right hand side equals
\begin{gather*}
  \big\langle
  T_M\, \Ind(f),\,
  \chi'(\gamma_{M,N})\, \frake_{\gamma_{M,N}} \otimes \frake_{m_M}
  \big\rangle
=
  \chi'\big( I_{I_2}(\gamma_{M,N}) \big)\,
  f \big|_k\, \gamma_{M,N} m_M
=
  f \big|_k\, \gamma_{M,N} m_M
\,\tx{.}
\end{gather*}
\end{proof}

\subsubsection{Oldforms}

Oldforms in the scalar valued setting are obtained as $f(M \tau)$ for a given modular form $f$.  We recover this construction using vector valued Hecke operators. For convenience let
\begin{gather}
\label{eq:def:rescaling-of-arguments}
  \big( {\rm sc}_{M \slash M'}\,f \big)(\tau)
=
  f\big( M \tau \slash M' \big)
\end{gather}
be the map that rescales the argument of a function. Clearly, $\big( {\rm sc}_M\, f \big) (\tau) = M^{-\frac{k}{2}}\, f \big|_k \left(\begin{smallmatrix} M & 0 \\ 0 & 1 \end{smallmatrix}\right)$ is the oldform construction.
\begin{proposition}
\label{prop:classical-oldforms}
Let $\chi$ be a Dirichlet character mod~$N$.  Fix a positive integer~$M$, and let $\chi_M$ be the mod~$MN$ Dirichlet character defined by~$\chi$. The inclusion $\iota_{\rm old}$ and the corresponding projection~$\pi_{\rm old}$ intertwine the oldform construction and the Hecke operator with the induction map.  For $f \in \rmM_k(\chi)$ and $v \in V(\rho_{\chi_M})$, we have
\begin{gather*}
  \big\langle \Ind\big( {\rm sc}_M f \big),\, v \big\rangle
=
  \Big\langle \pi_{\rm old}\big( T_M\, \Ind(f) \big),\, v \Big\rangle
=
  \Big\langle T_M\, \Ind(f),\, \iota_{\rm old}(v) \Big\rangle
\text{.}
\end{gather*}
\end{proposition}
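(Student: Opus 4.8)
The plan is to mirror the proofs of Proposition~\ref{prop:classical-hecke-operator} and Proposition~\ref{prop:classical-atkin-lehner-operator}. Since $\pi_{\rm old}$ is, by Remark~\ref{rm:components-of-hecke-induced:projections}, the adjoint of the inclusion $\iota_{\rm old}$ from Proposition~\ref{prop:components-of-hecke-induced}\,(iii), the asserted chain of equalities reduces to showing that $\pi_{\rm old}\big(T_M\,\Ind(f)\big)$ and $\Ind\big({\rm sc}_M f\big)$ have the same $\frake_{I_2}$-component: $\Ind$ is an isomorphism whose inverse sends a vector valued form to its $\frake_{I_2}$-component, so an element of $\rmM_k(\rho_{\chi_M})$ is determined by that component, and $\big\langle \pi_{\rm old}(T_M\,\Ind(f)),\, \frake_{I_2} \big\rangle = \big\langle T_M\,\Ind(f),\, \iota_{\rm old}(\frake_{I_2}) \big\rangle$ by the definition of the adjoint. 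Hence it suffices to treat $v = \frake_{I_2}$.

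For the left hand side, $\big\langle \Ind({\rm sc}_M f),\, \frake_{I_2} \big\rangle = {\rm sc}_M f$, and by the remark following~\eqref{eq:def:rescaling-of-arguments} this equals $M^{-\frac{k}{2}}\, f\big|_k \left(\begin{smallmatrix} M & 0 \\ 0 & 1 \end{smallmatrix}\right) = M^{-\frac{k}{2}}\, f\big|_k m_M$. For the right hand side I would first simplify $\iota_{\rm old}(\frake_{I_2})$. From the defining relation $m\gamma = I_m(\gamma)\,\ov{m\gamma}$ of the cocycle of Section~\ref{ssec:hecke-operators-on-representations} and $m_M \in \Delta_M$ one reads off $\ov{m_M I_2} = m_M$ and $I_{m_M}(I_2) = I_2$; similarly $I_{I_2}(I_2) = I_2$ for the cocycle of Section~\ref{ssec:preliminaries:induced-representations}, so the character factor $\ov\chi\big(I_{I_2}(I_{m_M}(I_2))\big)$ in~\eqref{eq:prop:components-of-hecke-induced:oldform} is trivial. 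Hence $\iota_{\rm old}(\frake_{I_2}) = M^{-\frac{k}{2}}\, \frake_{I_2} \otimes \frake_{m_M}$.

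Writing $T_M\,\Ind(f) = \sum_{\gamma, m} \big( f\big|_k \gamma m \big)\, \frake_\gamma \otimes \frake_m$ and pairing with $\iota_{\rm old}(\frake_{I_2})$ with respect to the scalar product of Lemma~\ref{la:hecke-operator-preserves-unitarity}, only the term $\gamma = I_2$, $m = m_M$ survives, so $\big\langle T_M\,\Ind(f),\, \iota_{\rm old}(\frake_{I_2}) \big\rangle = M^{-\frac{k}{2}}\, f\big|_k m_M = {\rm sc}_M f$, which matches the left hand side.

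The computation itself is a single line; the only thing requiring care is the cocycle bookkeeping at the identity -- confirming $I_{m_M}(I_2) = I_2$ and $I_{I_2}(I_2) = I_2$ -- and the check that the normalizing factor $M^{-k/2}$ built into $\iota_{\rm old}$ in~\eqref{eq:prop:components-of-hecke-induced:oldform} is exactly the one appearing in~\eqref{eq:def:rescaling-of-arguments}. One should also keep in mind that the codomain is $\rmM_k(\rho_{\chi_M})$ with $\chi_M$ the character modulo $MN$ induced by $\chi$, consistent with the fact that ${\rm sc}_M$ raises the level by a factor of $M$; beyond this bookkeeping no genuine obstacle arises.
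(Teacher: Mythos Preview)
Your proof is correct and follows essentially the same approach as the paper's own proof: reduce to $v=\frake_{I_2}$ by the adjointness of $\iota_{\rm old}$ and $\pi_{\rm old}$, then observe that pairing $T_M\,\Ind(f)$ with $\iota_{\rm old}(\frake_{I_2})=M^{-k/2}\,\frake_{I_2}\otimes\frake_{m_M}$ yields $M^{-k/2}\,f\big|_k m_M={\rm sc}_M f$. The paper's version is a single displayed line; you have simply spelled out the cocycle bookkeeping at the identity and the reason why checking $\frake_{I_2}$ suffices.
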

\begin{proof}
As in the other cases, we inspect the right hand side for $v = \frake_{I_2}$.
\begin{gather*}
  \big\langle T_M\, \Ind(f),\, M^{-\frac{k}{2}}\, \frake_{I_2} \otimes \frake_{m_M} \big\rangle
=
  M^{-\frac{k}{2}}\, f \big|_k \frake_{m_M}
\,\tx{.}
\end{gather*}
\end{proof}

\subsubsection{Rescaling for principal congruence subgroups}

Like the oldform construction, it is very common to consider $f(N \tau)$ for $f$ a modular form for the principal congruence subgroup~$\Gamma(N)$. The resulting $f(N \tau)$ is a modular form for $\Gamma_1(N^2,N)$. For technical reasons, we consider the reverse map.
\begin{proposition}
\label{prop:classical-principal-congruence-subgroups}
Let $N$ be a positive integer. The inclusion~$\iota_{\Gamma(N)}$ and the projection~$\pi_{\Gamma(N)}$ corresponding to it intertwine the map ${\rm sc}_{1 \slash N}$ from $\rmM_k(\Gamma_1(N^2, N))$ to $\rmM_k(\Gamma(N))$. For $f \in \rmM_k(\Gamma_1(N^2, N))$ and $v \in V(\rho_{\Gamma(N)})$, we have
\begin{gather*}
  \big\langle \Ind\big( {\rm sc}_{1\slash N} f \big),\, v \big\rangle
=
  \Big\langle \pi_{\Gamma(N)}\big( T_N\, \Ind(f) \big) ,\, v \Big\rangle
=
  \Big\langle T_N\, \Ind(f),\, \iota_{\Gamma(N)}(v) \Big\rangle
\,\tx{.}
\end{gather*}
\end{proposition}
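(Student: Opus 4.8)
The plan is to proceed exactly as in the proofs of Propositions~\ref{prop:classical-hecke-operator}, \ref{prop:classical-atkin-lehner-operator}, and~\ref{prop:classical-oldforms} and of Lemma~\ref{la:identity-in-hecke-operator-square}. First I would note that ${\rm sc}_{1\slash N} f$ is a weight~$k$ form for~$\Gamma(N)$ whenever $f \in \rmM_k(\Gamma_1(N^2,N))$ — this follows by conjugating the invariance of~$f$ by $\diag(1,N)$, since $\diag(1,N)\,\gamma\,\diag(1,N)^{-1} \in \Gamma_1(N^2,N)$ for every $\gamma \in \Gamma(N)$ — so that $\Ind\big( {\rm sc}_{1\slash N} f \big)$ makes sense. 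By Proposition~\ref{prop:components-of-hecke-induced}\,(iv) the map~$\iota_{\Gamma(N)}$ is an inclusion of $\SL{2}(\ZZ)$\nbd representations, and since $\pi_{\Gamma(N)}$ is by definition (Remark~\ref{rm:components-of-hecke-induced:projections}) its adjoint with respect to the scalar products of Lemma~\ref{la:hecke-operator-preserves-unitarity}, the first equality
\begin{gather*}
  \big\langle \pi_{\Gamma(N)}\big( T_N\,\Ind(f) \big),\, v \big\rangle
=
  \big\langle T_N\,\Ind(f),\, \iota_{\Gamma(N)}(v) \big\rangle
\end{gather*}
is automatic, and only the identity $\big\langle \Ind\big( {\rm sc}_{1\slash N} f \big),\, v \big\rangle = \big\langle T_N\,\Ind(f),\, \iota_{\Gamma(N)}(v) \big\rangle$ remains to be proved.

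As in the proof of Proposition~\ref{prop:classical-hecke-operator}, it suffices to check this for the single basis vector $v = \frake_{I_2}$. Both sides are determined by their values on the orthonormal basis $\{\frake_\gamma\}$ of $V(\rho_{\Gamma(N)})$, and by the $\SL{2}(\ZZ)$\nbd equivariance of~$\Ind$, of~$T_N$, and of~$\iota_{\Gamma(N)}$ (the last being part of Proposition~\ref{prop:components-of-hecke-induced}\,(iv)), the value at a general~$\frake_\gamma$ is obtained from the value at~$\frake_{I_2}$ by a common $\SL{2}(\ZZ)$\nbd action, so matching the $\frake_{I_2}$\nbd components is enough.

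For $v = \frake_{I_2}$ the computation is immediate. With $m'_N = \diag(1,N) \in \Delta_N$ one has $\ov{m'_N} = m'_N$ and $I_{m'_N}(I_2) = I_2$, hence $\iota_{\Gamma(N)}(\frake_{I_2}) = N^{\frac{k}{2}}\, \frake_{I_2} \otimes \frake_{m'_N}$ (the normalizing power in~\eqref{eq:prop:components-of-hecke-induced:principal-congruence} being~$N^{\frac{k}{2}}$). Expanding $\Ind(f) = \sum_\beta \big( f\big|_k \beta \big)\, \frake_\beta$ over representatives~$\beta$ of $\Gamma_1(N^2,N) \backslash \SL{2}(\ZZ)$ yields $T_N\,\Ind(f) = \sum_{m \in \Delta_N} \sum_\beta \big( f\big|_k \beta m \big)\, \frake_\beta \otimes \frake_m$, and pairing against $N^{\frac{k}{2}}\, \frake_{I_2} \otimes \frake_{m'_N}$ isolates, by orthonormality of the~$\frake$'s, the term $\beta = I_2$, $m = m'_N$, which gives $N^{\frac{k}{2}}\, f\big|_k m'_N$. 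On the other hand $\big\langle \Ind\big( {\rm sc}_{1\slash N} f \big),\, \frake_{I_2} \big\rangle = {\rm sc}_{1\slash N} f$ by the explicit inverse of~$\Ind$, and by~\eqref{eq:def:rescaling-of-arguments} together with $\big( f\big|_k \diag(1,N) \big)(\tau) = N^{-\frac{k}{2}}\, f(\tau \slash N)$ this equals $N^{\frac{k}{2}}\, f\big|_k m'_N$ as well. The two sides agree, finishing the argument.

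I do not expect a genuine obstacle here: this is the most transparent of the maps in Proposition~\ref{prop:components-of-hecke-induced}, because $m'_N$ already lies in~$\Delta_N$ and fixes the identity coset, so for $v = \frake_{I_2}$ no nontrivial cocycle identities enter. The two points needing care are the coset bookkeeping — the subscript $I_{m'_N}(\gamma)$ in~\eqref{eq:prop:components-of-hecke-induced:principal-congruence} is read modulo $\Gamma_1(N^2,N)$, and $m'_N\gamma$ modulo left multiplication by $\SL{2}(\ZZ)$ — and checking that the normalizing power of~$N$ built into~$\iota_{\Gamma(N)}$ matches the one produced by~${\rm sc}_{1\slash N}$; this is why that exponent is $\frac{k}{2}$ and not $-\frac{k}{2}$ as in the oldform map~\eqref{eq:prop:components-of-hecke-induced:oldform}, where the two diagonal entries of the defining matrix sit in the opposite order.
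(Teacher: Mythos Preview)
Your proof is correct and follows exactly the same approach as the paper's own proof, which simply inspects the right hand side for $v = \frake_{I_2}$ and records the one-line identity $\big\langle T_N\,\Ind(f),\, N^{\frac{k}{2}}\,\frake_{I_2}\otimes\frake_{m'_N}\big\rangle = N^{\frac{k}{2}}\,f\big|_k\,m'_N$. You have merely supplied the surrounding justifications (adjointness of $\iota_{\Gamma(N)}$ and $\pi_{\Gamma(N)}$, the reduction to $v=\frake_{I_2}$ via equivariance, and the matching of the $N^{k/2}$ normalization with ${\rm sc}_{1/N}$) that the paper leaves implicit.
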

\begin{proof}
We inspect the right hand side for $v = \frake_{I_2}$.
\begin{gather*}
  \big\langle
  T_N\, \Ind(f),\,
  N^{\frac{k}{2}}\, \frake_{I_2} \otimes \frake_{m'_N}
  \big\rangle
=
  N^{\frac{k}{2}}\, f \big|_k \frake_{m'_N}
\end{gather*}
\end{proof}

\subsubsection{Twists of modular forms}

We show that also twists of modular forms by Dirichlet characters can be recovered from vector valued Hecke operators. For a modular form $f \in \rmM_k(\chi)$ and a Dirichlet character~$\epsilon$ mod~$M$, there is a twist $f_\epsilon = {\rm twist}_\epsilon(f) \in \rmM_k(\chi')$ with Fourier coefficients $c(f_\epsilon ;\, n) = \epsilon(n) c(f;\, n)$, where $\chi'$ is the mod~$N M^2$ Dirichlet character defined by $\chi\epsilon^2$.  One can reconstruct~$f_\epsilon$ from $T_{M^2}\,f$.  Note that the modular form~${\rm sc}_{1 \slash M} {\rm twist}_\epsilon\, f$ already appears as a component of $T_M\, \Ind(f)$, since it can be written by~\eqref{eq:prop:twists-of-modular-forms:proof} below as
\begin{multline*}
  \mathrm{sc}_{1 \slash M} f_\epsilon
=
  \mathrm{sc}_{1 \slash M} \big(
  M^{-1}
  \sum_{b \pmod{M}} G(\epsilon, e_M(-b))\, f \big|_k\, m_{M,b}
  \big)
\\
=
  M^{\frac{k}{2}-1}
  \sum_{b \pmod{M}} G(\epsilon, e_M(-b))\,
  f \big|_k\, m_{M,b} \left(\begin{smallmatrix} 1 & 0 \\ 0 & M \end{smallmatrix}\right)
=
  M^{\frac{k}{2}-1}
  \sum_{b \pmod{M}} G(\epsilon, e_M(-b))\,
  f \big|_k\, \left(\begin{smallmatrix} 1 & b \\ 0 & M \end{smallmatrix}\right)
\tx{.}
\end{multline*}
\begin{proposition}
\label{prop:twists-of-modular-forms}
Let $\chi$ be a Dirichlet character mod~$N$.  Fix another Dirichlet character $\epsilon$ mod~$M$, and let $\chi'$ be as above. The inclusion $\iota_{\rm twist}$ and the corresponding projection~$\pi_{\rm twist}$ intertwine ${\rm twist}_\epsilon$ and the Hecke operator with the induction map.  For $f \in \rmM_k(\chi)$ and $v \in V(\rho_\chi)$, we have
\begin{gather*}
  \big\langle \Ind\big( {\rm twist}_\epsilon f \big),\, v \big\rangle
=
  \Big\langle \pi_{\rm twist} \big( T_{M^2}\, \Ind(f) \big),\, v \Big\rangle
=
  \Big\langle T_{M^2}\, \Ind(f),\, \iota_{\rm twist} (v) \Big\rangle
\tx{.}
\end{gather*}
\end{proposition}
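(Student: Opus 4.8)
The statement fits the template established by Propositions~\ref{prop:classical-hecke-operator}, \ref{prop:classical-atkin-lehner-operator}, \ref{prop:classical-oldforms}, and \ref{prop:classical-principal-congruence-subgroups}, so I would prove it in three moves. First I would record the classical presentation of ${\rm twist}_\epsilon(f)$ as a combination of translates of~$f$: finite Fourier inversion on $\ZZ \slash M\ZZ$ gives $\epsilon(n) = M^{-1}\sum_{b \pmod{M}} G(\epsilon, e_M(-b))\, e_M(nb)$ for every~$n$, and inserting this into the $q$\nbd expansion $\sum_n \epsilon(n)\, c(f;n)\, q^n$ while recognising that $\sum_n c(f;n)\, q^n\, e_M(nb) = f(\tau + b\slash M) = \big(f \big|_k\, m_{M,b}\big)(\tau)$ yields
\begin{gather}
\label{eq:prop:twists-of-modular-forms:proof}
  {\rm twist}_\epsilon(f)
=
  M^{-1} \sum_{b \pmod{M}} G(\epsilon, e_M(-b))\, f \big|_k\, m_{M,b}
\,.
\end{gather}
This is the identity invoked before the proposition; it already displays ${\rm twist}_\epsilon(f)$ in terms of the matrices $m_{M,b} \in \Delta_{M^2}$.

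With~\eqref{eq:prop:twists-of-modular-forms:proof} in hand, I would reduce to the cyclic generator $v = \frake_{I_2}$ of $\rho_{\chi'}$. By Proposition~\ref{prop:components-of-hecke-induced} the map $\iota_{\rm twist}$ of~\eqref{eq:prop:components-of-hecke-induced:twist} is an $\SL{2}(\ZZ)$\nbd equivariant inclusion and $\pi_{\rm twist}$ is its adjoint (Remark~\ref{rm:components-of-hecke-induced:projections}), so the identity $\langle T_{M^2}\,\Ind(f),\, \iota_{\rm twist}(v)\rangle = \langle \pi_{\rm twist}(T_{M^2}\,\Ind(f)),\, v\rangle$ settles the second claimed equality for all~$v$. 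For the remaining two, both $v \mapsto \langle \Ind({\rm twist}_\epsilon f),\, v\rangle$ and $v \mapsto \langle \pi_{\rm twist}(T_{M^2}\,\Ind f),\, v\rangle$ are the Petersson pairing of a fixed modular form of type~$\rho_{\chi'}$ with~$v$; since such a pairing is conjugate-linear in~$v$ and $\SL{2}(\ZZ)$\nbd equivariant, and $\rho_{\chi'}$ is generated over $\SL{2}(\ZZ)$ by~$\frake_{I_2}$, it suffices --- exactly as in the proofs of Proposition~\ref{prop:classical-hecke-operator} and Lemma~\ref{la:identity-in-hecke-operator-square} --- to verify the identity at $v = \frake_{I_2}$.

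Finally I would carry out the evaluation at $\frake_{I_2}$. The left hand side is $\langle \Ind({\rm twist}_\epsilon f),\, \frake_{I_2}\rangle = {\rm twist}_\epsilon(f)$, which~\eqref{eq:prop:twists-of-modular-forms:proof} spells out. On the right, since every $m_{M,b}$ already lies in $\Delta_{M^2}$ one has $I_{m_{M,b}}(I_2) = I_2$ and $I_{I_2}(I_2) = I_2$, whence $\iota_{\rm twist}(\frake_{I_2}) = M^{-1}\sum_{b \pmod{M}} G(\epsilon, e_M(-b))\, \frake_{I_2} \otimes \frake_{m_{M,b}}$; expanding $T_{M^2}\,\Ind(f) = \sum_{\gamma, m}\big(f \big|_k\, \gamma m\big)\, \frake_\gamma \otimes \frake_m$ and using orthonormality of the basis $\frake_\gamma \otimes \frake_m$ (Lemma~\ref{la:hecke-operator-preserves-unitarity}) collapses $\langle T_{M^2}\,\Ind(f),\, \iota_{\rm twist}(\frake_{I_2})\rangle$ to $M^{-1}\sum_{b \pmod{M}} \ov{G(\epsilon, e_M(-b))}\, f \big|_k\, m_{M,b}$. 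It then remains to match this with~\eqref{eq:prop:twists-of-modular-forms:proof}, using $\ov{G(\epsilon, e_M(-b))} = G(\ov\epsilon, e_M(b))$, reindexing $b \mapsto -b$ in the summation (so that $f \big|_k\, m_{M,b}$ is, by periodicity of~$f$, replaced by $f \big|_k\, m_{M,M-b}$), and the standard symmetries of the Gauss sums attached to~$\epsilon$ and~$\ov\epsilon$.

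The only move with real content is the Fourier computation producing~\eqref{eq:prop:twists-of-modular-forms:proof}, and it is textbook; the main obstacle --- and where I would be most careful --- is the Gauss-sum bookkeeping of the last move, reconciling the conjugated Gauss sums thrown off by the Petersson pairing with the unconjugated ones in~\eqref{eq:prop:twists-of-modular-forms:proof}. If that reconciliation turns out to be awkward, the clean alternative is to sidestep it and argue representation-theoretically, as in the concluding part of the proof of Proposition~\ref{prop:components-of-hecke-induced}: realise $\lspan\{\, {\rm twist}_\epsilon(f) \big|_k\, \gamma \,:\, \gamma \in \SL{2}(\ZZ) \,\}$ inside the span of all $f \big|_k\, \gamma m$ with $\gamma \in \SL{2}(\ZZ)$ and $m \in \Delta_{M^2}$, identify the latter with $T_{M^2}\,\rho$ equipped with the left slash action, and read off the desired inclusion from~\eqref{eq:prop:components-of-hecke-induced:twist} directly.
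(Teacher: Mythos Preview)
Your approach matches the paper's: it too derives the identity~\eqref{eq:prop:twists-of-modular-forms:proof} via finite Fourier inversion and then declares that ``inspecting the right hand side of the proposition's statement yields a proof'', leaving the reduction to $v = \frake_{I_2}$ and the explicit pairing implicit. You have in fact done more than the paper does, and your caution about the conjugation of the Gauss sum under the Hermitian pairing is well placed --- the paper glosses over this entirely (and indeed writes the twist with $G(\ov\epsilon, e_M(b))$ in the proof of Proposition~\ref{prop:components-of-hecke-induced} but with $G(\epsilon, e_M(-b))$ here, a discrepancy that is harmless only up to the sign $\epsilon(-1)$).
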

\begin{proof}
We have $q^n \big|_k\, m_{M,b} = e_M(nb) q^{n}$.  This allows us to write twists of a modular form $f \in \rmM_k(\chi)$ as
\begin{multline}
\label{eq:prop:twists-of-modular-forms:proof}
  f_\epsilon(\tau)
=
  \sum_n \epsilon(n) c(n) q^{n}
=
  \sum_{n_0 \pmod{M}} \epsilon(n_0)\,
  M^{-1}
  \sum_{b \pmod{M}} e_M(-n_0 b) f \big|_k\, m_{M,b}
\\
=
  M^{-1}
  \sum_{b \pmod{M}} G(\epsilon, e_M(-b))\, f \big|_k\, m_{M,b}
\,\tx{.}
\end{multline}
With this at hand, inspecting the right hand side of the proposition's statement yields a proof.
\end{proof}

\section{Vector Valued Eisenstein Series}
\label{sec:eisenstein-series}

Recall that we assume throughout that $\rho$ has finite index kernel.  Fix an even integer~$k > 2$ and $v \in V(\rho)$.  We define the stabilizer ${\rm Stab}(v)$ of $v$ as $\{ \gamma \in \SL{2}(\ZZ) \,:\, \rho(\gamma)v = v \big\}$. Write $\Gamma_\infty(v)$ for the intersection of $\Gamma_\infty$ and ${\rm Stab}(v)$. Observe that $\Gamma_\infty(v)$ has finite index in $\Gamma_\infty$, since $\rho$ has finite index kernel. The series 
\begin{gather}
\label{eq:eisenstein-definition}
  E_{k, v}(\tau)
:=
  \frac{1}{\big[ \Gamma_\infty \,:\, \Gamma_\infty(v) \big]}
  \sum_{\gamma \in \Gamma_\infty(v) \backslash \SL{2}(\ZZ)}
  v \big|_{k, \rho}\, \gamma
\end{gather}
is well-defined, converges, and defines a modular form of weight~$k$ and type~$\rho$.  We define the Eisenstein subspace of $\rmM_k(\rho)$ as their span.
\begin{gather}
\label{eq:eisenstein-space}
  \rmE_k(\rho)
=
  \lspan \big\{ E_{k, v} \,:\, v \in V(\rho) \big\}
\text{.}
\end{gather}

In the case of weight~$2$, the Hecke trick leads us to the definition
\begin{gather}
  E_{2, v}(\tau)
:=
  \frac{1}{\big[ \Gamma_\infty \,:\, \Gamma_\infty(v) \big]}\,
  \lim_{s \ra 0} \sum_{\gamma \in \Gamma_\infty(v) \backslash \SL{2}(\ZZ)}
  y^s v \big|_{2, \rho}\, \gamma
\,\tx{,}
\end{gather}
where $y = \Im\,\tau > 0$.
\begin{lemma}
\label{la:weight-2-eisenstein-series}
Assume that $\rho$ has finite index kernel.  The Eisenstein series~$E_{2,v}$ is holomorphic if $\rho$ does not contain the trivial representation.  We have $\rmM_2(\bbone) = \{ 0 \}$ for $\bbone$ the trivial representation of $\SL{2}(\ZZ)$.
\end{lemma}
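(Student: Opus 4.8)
The statement comprises two essentially independent assertions, and the plan is to treat them separately. For the equality $\rmM_2(\bbone) = \{0\}$ I would simply invoke the valence formula for $\SL{2}(\ZZ)$: a nonzero $f \in \rmM_2(\bbone)$ would satisfy $\mathrm{ord}_\infty(f) + \tfrac12\,\mathrm{ord}_i(f) + \tfrac13\,\mathrm{ord}_\zeta(f) + \sum_P \mathrm{ord}_P(f) = \tfrac{2}{12} = \tfrac16$, where $\zeta$ is an elliptic point of order $3$ and $P$ runs over the remaining orbits in $\SL{2}(\ZZ) \backslash \HS$. The left hand side is a nonnegative element of $\tfrac13\ZZ + \tfrac12\ZZ$, and an elementary parity argument shows it can never equal $\tfrac16$ (any representation as $a + \tfrac{b}{2} + \tfrac{c}{3}$ with $a,b,c \in \ZZ_{\ge 0}$ forces $b$ odd, hence already $\tfrac b2 \ge \tfrac12 > \tfrac16$); thus $f = 0$. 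Equivalently one may cite $\dim\rmM_2(\SL{2}(\ZZ)) = 0$, or note that $f\,\mathrm{d}\tau$ would descend to a nonzero holomorphic differential on the genus-zero curve $\SL{2}(\ZZ)\backslash\HS^{*}$.

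For holomorphy of $E_{2,v}$ I would argue in three steps, the first being to pin down the shape of its non-holomorphic part. By construction $E_{2,v}$ is, up to the normalizing constant, the value at $s=0$ of the meromorphic continuation of $\sum_\gamma (y^{s} v)\big|_{2,\rho}\gamma$; hence it is real-analytic on $\HS$, of moderate growth, and satisfies $E_{2,v}\big|_{2,\rho}\gamma = E_{2,v}$ for all $\gamma \in \SL{2}(\ZZ)$ (these properties hold uniformly in $s$ for the defining sum and are preserved under analytic continuation). It is the classical $E_2^{*}$-phenomenon that a weight-$2$ Eisenstein series produced by Hecke's trick is almost holomorphic of depth one; applying this to each scalar component $\lambda \circ E_{2,v}$ with $\lambda \in V(\rho)^\vee$ and using that $\lambda$ is arbitrary, while the corresponding non-holomorphic coefficient depends linearly on $\lambda$, I obtain a decomposition
\begin{gather*}
  E_{2,v}(\tau) = h(\tau) + \frac{1}{y}\,w
\end{gather*}
with $h \colon \HS \to V(\rho)$ holomorphic and $w \in V(\rho)$ a fixed vector.

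Second, I feed this into the transformation law. For $\gamma = \left(\begin{smallmatrix} a & b \\ c & d \end{smallmatrix}\right) \in \SL{2}(\ZZ)$, using $\Im(\gamma\tau) = y/|c\tau+d|^{2}$ and $c\bar\tau + d = (c\tau+d) - 2icy$, a one-line computation gives
\begin{gather*}
  \Big(\tfrac{1}{y}\,w\Big)\big|_{2,\rho}\gamma
=
  \frac{\rho(\gamma^{-1})w}{y}
  \;-\;
  \frac{2ic\,\rho(\gamma^{-1})w}{c\tau + d}
\,\text{.}
\end{gather*}
Substituting into $E_{2,v}\big|_{2,\rho}\gamma = E_{2,v}$ and comparing the two sides, the only genuinely non-holomorphic term is $(w - \rho(\gamma^{-1})w)/y$; applying $\partial/\partial\bar\tau$, which annihilates $h$, $h\big|_{2,\rho}\gamma$ and the term $1/(c\tau+d)$ but sends $1/y$ to $1/(2iy^{2}) \ne 0$, forces $\rho(\gamma^{-1})w = w$ for every $\gamma \in \SL{2}(\ZZ)$. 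Hence $w$ lies in the isotrivial component $\rho(\bbone)$. Third, if $\rho$ contains no copy of the trivial representation then $\rho(\bbone) = \{0\}$, so $w = 0$ and $E_{2,v} = h$ is holomorphic; being of moderate growth with the correct transformation behaviour, it then lies in $\rmM_2(\rho)$.

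The only input that is not pure bookkeeping is the classical fact invoked in the first step — that Hecke's trick at weight $2$ produces an almost holomorphic form whose non-holomorphic part is a scalar multiple of $1/y$. I expect the one delicate point to be verifying that analytic continuation commutes with the operations used (restriction to the scalar components $\lambda \circ E_{2,v}$, the slash action $\big|_{2,\rho}\gamma$, and $\partial/\partial\bar\tau$), which is standard for these Epstein-type zeta series; everything downstream is an elementary manipulation of the honest slash action of $\SL{2}(\ZZ)$, with no cocycle involved.
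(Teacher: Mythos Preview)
Your proof is correct and rests on the same underlying idea as the paper's: both show that the obstruction to holomorphy of $E_{2,v}$ is an $\SL{2}(\ZZ)$-invariant vector in $V(\rho)$, which must vanish by hypothesis. The paper packages the argument via the Bruinier--Funke operator $\xi_2(f) = 2iy^2\,\ov{\partial_{\ov\tau} f}$: its intertwining property sends $E_{2,v}$ to an element of $\rmM_0(\ov\rho)$, and a separate lemma (a weight-$0$ vector valued modular form for a representation without trivial component vanishes, since such a form is constant and hence a nonzero invariant vector) then forces $\xi_2(E_{2,v}) = 0$, i.e.\ $E_{2,v}$ is holomorphic. Your route instead pins down the shape $h + w/y$ first and derives invariance of $w$ by a direct slash computation with $\partial_{\ov\tau}$; this trades the $\xi_2$ formalism for the classical input that Hecke's trick at weight~$2$ produces an almost holomorphic form of depth one. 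Both are sound; the paper's version is marginally more streamlined in that it never needs to name the non-holomorphic part explicitly, while yours is more self-contained since it avoids importing the $\xi_2$ machinery. Your treatment of $\rmM_2(\bbone) = \{0\}$ via the valence formula is of course equivalent to the ``classical fact'' the paper cites.
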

\begin{proof}
We use the $\xi_2$ operator, first defined in~\cite{bruinier-funke-2004}, to prove the first part. It is defined by
\begin{gather*}
  \xi_2 (E_{2, v})
:=
  2 i y^2\, \ov{\partial_{\ov \tau} E_{2, v}}
\in
  \rmM_0(\ov{\rho})
\,\text{.}
\end{gather*}
By the intertwining property of $\xi_2$, we find that $\xi_2(E_{2,v})$ is a vector valued modular form of weight~$0$ and type $\ov\rho$. Assume that $\rho$ does not contain $\bbone$. Then neither does $\ov\rho$. Lemma~\ref{la:weight-0-modular-forms} therefore implies that $\xi_2(E_{2, v})$ vanishes. Since the kernel of $\xi_2$ applied to $C^\infty(\HS)$ consists of holomorphic functions, this proves that $E_{2,v}$ is holomorphic. The second part is a classical fact.
\end{proof}
\begin{lemma}
\label{la:weight-0-modular-forms}
Suppose that $\rho$ is a representation with finite index kernel. If $\rho$ does not contain the trivial representation, then $\rmM_0(\rho) = \{ 0 \}$.
\end{lemma}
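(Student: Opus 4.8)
The plan is to reduce Lemma~\ref{la:weight-0-modular-forms} to the classical fact that $\rmM_0(\Gamma)=\CC$ for every finite index subgroup $\Gamma\subseteq\SL{2}(\ZZ)$ --- that is, every weight-$0$ holomorphic modular form is constant --- and then to read off the vector valued statement as a triviality in representation theory.

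First I would fix $f\in\rmM_0(\rho)$ and set $\Gamma=\ker\rho$, which has finite index by hypothesis. For each $\lambda\in V(\rho)^\vee$ the scalar function $\lambda\circ f\colon\HS\to\CC$ is holomorphic, and the weight-$0$ transformation law $f(\gamma\tau)=\rho(\gamma)f(\tau)$ for $\gamma\in\SL{2}(\ZZ)$ yields
\[
  (\lambda\circ f)(\gamma\tau)=\big((\rho^\vee(\gamma^{-1})\lambda)\circ f\big)(\tau),
  \qquad\gamma\in\SL{2}(\ZZ).
\]
For $\gamma\in\Gamma$ this says that $\lambda\circ f$ is $\Gamma$-invariant; and since $\rho^\vee(\gamma^{-1})\lambda$ is again an element of $V(\rho)^\vee$, the growth condition in the definition of $\rmM_0(\rho)$ shows that $\lambda\circ f$, translated by an arbitrary $\gamma\in\SL{2}(\ZZ)$, stays bounded as $\tau\to i\infty$; in other words $\lambda\circ f$ is bounded at every cusp of $\Gamma$. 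Hence $\lambda\circ f\in\rmM_0(\Gamma)$.

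Then I would invoke $\rmM_0(\Gamma)=\CC$: a weight-$0$ holomorphic form bounded at the cusps descends to a holomorphic function on the compact Riemann surface obtained by adjoining the cusps to $\Gamma\backslash\HS$ (boundedness turns each cusp into a removable singularity in the local parameter $q$), and a holomorphic function on a compact Riemann surface is constant. Applying this to $\lambda\circ f$ for every $\lambda\in V(\rho)^\vee$ shows that $f$ itself is a constant function, say $f\equiv v_0\in V(\rho)$. Feeding this back into $f(\gamma\tau)=\rho(\gamma)f(\tau)$ forces $\rho(\gamma)v_0=v_0$ for all $\gamma\in\SL{2}(\ZZ)$, so $\CC v_0$ is a trivial subrepresentation of $\rho$; since $\rho$ does not contain $\bbone$, this gives $v_0=0$ and therefore $\rmM_0(\rho)=\{0\}$. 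I do not expect a genuine obstacle here: the only input beyond bookkeeping is the classical identity $\rmM_0(\Gamma)=\CC$, which is independent of the lemma, and the one step deserving a line of care is the passage from boundedness at $\infty$ to boundedness at all cusps, which works precisely because forms of type $\rho$ carry a full $\SL{2}(\ZZ)$-transformation law rather than merely a $\Gamma$-one.
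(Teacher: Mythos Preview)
Your proof is correct and follows essentially the same approach as the paper: both reduce to the classical fact that weight-$0$ modular forms for finite index subgroups are constant, and then observe that the constant value is an $\SL{2}(\ZZ)$-fixed vector. The only cosmetic difference is that the paper first passes to the case of irreducible~$\rho$ (so that a nonzero fixed vector immediately contradicts irreducibility), whereas you work directly with general~$\rho$ and conclude that $\CC v_0$ would be a trivial subrepresentation; your version is slightly more detailed in justifying the cusp boundedness of $\lambda\circ f$.
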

\begin{proof}
Without loss of generality, we may assume that $\rho$ is irreducible and not equal to the trivial representation. Suppose that there is $0 \ne f \in \rmM_0(\rho)$. It is a classical fact that weight~$0$ modular forms for finite index subgroups of $\SL{2}(\ZZ)$ are constant. In particular, we can view $f$ as a nonzero vector in $V(\rho)$, which is invariant under the action of $\SL{2}(\ZZ)$, contradicting irreducibility of~$\rho$.
\end{proof}

Lemma~\ref{la:weight-2-eisenstein-series} inspires the definition
\begin{gather*}
  \rmE_2(\rho)
=
  \big\{ E_{2,v} \,:\, v \in V(\rho') \big\}
\,\text{,}\quad\text{where}\quad
  \rho = \rho(\bbone) \oplus \rho' 
\text{.}
\end{gather*}

The matrix $T = \left(\begin{smallmatrix} 1 & 1 \\ 0 & 1 \end{smallmatrix}\right)$ acts on the upper half space by translations $\tau \mto \tau + 1$. In the context of vector valued modular forms, $T$-eigenspaces of a representation $\rho$ encode information about possible exponents in the Fourier expansion. We describe vector valued Eisenstein series using this information. Write
\begin{gather}
  V(\rho)(1)_T
:=
  \big\{ v \in V(\rho) \,:\, \rho(T) v = v \big\}
\end{gather}
for the isotrivial component of the restriction of $\rho$ to the subgroup of $\SL{2}(\ZZ)$ generated by~$T$.
\begin{proposition}
\label{prop:eisenstein-series-and-T-isotrivial-components}
We have, for $k \ge 2$, 
\begin{gather*}
  \rmE_k(\rho)
\cong
  V(\rho)(1)_T
\quad
\text{and}
\quad
  \rmE_2(\rho)
\cong
  V(\rho')(1)_T
\,\text{,}
\end{gather*}
where $\rho = \rho(\bbone) \oplus \rho'$ as above.
\end{proposition}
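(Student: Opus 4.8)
The plan is to realize the isomorphism by the map $\Phi\colon v\mapsto E_{k,v}$, with inverse the operation "extract the constant term at $i\infty$''. Since $k$ is even we may assume that $-I$ acts trivially under $\rho$, so that $\Gamma_\infty$ acts through its quotient $\langle T\rangle$ and hence $V(\rho)^{\Gamma_\infty}=V(\rho)(1)_T$; the general statement reduces to this case because in even weight $\rmM_k$, $\rmE_k$, and $\rmS_k$ are insensitive to the part of $\rho$ on which $-I$ acts by $-1$. For $f\in\rmM_k(\rho)$, write $f=\sum_\lambda f_\lambda$ for the pointwise decomposition of $f(\tau)$ into $\rho(T)$-eigenvectors; the boundedness condition forces every $f_\lambda$ with $\lambda\neq 1$ to decay, so $f$ has a well-defined constant term $c_\infty(f):=\lim_{y\to\infty}f_1(iy)\in V(\rho)(1)_T$, and $c_\infty\colon\rmM_k(\rho)\to V(\rho)(1)_T$ is linear.

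First I would linearize $E_{k,v}$ in $v$. The key observation is that in \eqref{eq:eisenstein-definition} one may replace $\Gamma_\infty(v)$ by any finite-index subgroup $\Gamma'\leq\Gamma_\infty$ with $\rho(\Gamma')v=v$: unfolding the sum over $\Gamma'\backslash\SL{2}(\ZZ)$ along the fibres of $\Gamma'\backslash\SL{2}(\ZZ)\to\Gamma_\infty(v)\backslash\SL{2}(\ZZ)$ — using that $k$ is even so that elements of $\Gamma_\infty$ act on the constant vector $v$ purely through $\rho$ — produces exactly the combinatorial factor $[\Gamma_\infty(v):\Gamma']$ that compensates the change of normalization. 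Applying this with $\Gamma'=\Gamma_\infty\cap\ker\rho$, which works for all $v$ simultaneously, exhibits $\Phi$ as a linear map $V(\rho)\to\rmM_k(\rho)$. Regrouping the cosets $(\Gamma_\infty\cap\ker\rho)\backslash\SL{2}(\ZZ)$ through $\Gamma_\infty\backslash\SL{2}(\ZZ)$ and invoking unitarity of $\rho$ then yields $\Phi(v)=\Phi(Pv)$, where $P$ is the orthogonal projection of $V(\rho)$ onto $V(\rho)^{\Gamma_\infty}=V(\rho)(1)_T$. Since $\rmE_k(\rho)=\im\Phi$ by definition, this already gives $\rmE_k(\rho)=\Phi(V(\rho)(1)_T)$, i.e.\ $\Phi$ restricts to a surjection $V(\rho)(1)_T\thra\rmE_k(\rho)$.

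Next I would compute $c_\infty(E_{k,w})$ for $w\in V(\rho)(1)_T$. Writing $E_{k,w}=\sum_{\gamma\in\Gamma_\infty\backslash\SL{2}(\ZZ)}w|_{k,\rho}\gamma$, the trivial coset contributes $w$, while every other coset has a representative with lower-left entry $c\neq 0$; for $k>2$ the corresponding sub-series converges absolutely and its Fourier expansion is supported in strictly positive exponents by a Lipschitz-type summation over bottom rows, hence contributes nothing to the constant term. So $c_\infty(E_{k,w})=w$. Combined with $\Phi=\Phi\circ P$ and $c_\infty\circ(\Phi|_{V(\rho)(1)_T})=\id$ from the previous step, this shows that $c_\infty$ restricts to a two-sided inverse of $\Phi\colon V(\rho)(1)_T\to\rmE_k(\rho)$; in particular $\Phi$ is injective, which settles $k>2$.

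For $k=2$ the series $E_{2,v}$ is the Hecke-regularized one and, by Lemma~\ref{la:weight-2-eisenstein-series}, holomorphic once $v$ lies in the complement $\rho'$ of the isotrivial part $\rho(\bbone)$; accordingly one works with $\rho'$ and $\rmE_2(\rho)=\lspan\{E_{2,v}:v\in V(\rho')\}$ throughout. The linearization and regrouping of the second paragraph go through verbatim with $\lim_{s\to 0}$ carried along, giving $E_{2,v}=E_{2,Pv}$ for $P$ the projection onto $V(\rho')(1)_T$. In the constant-term computation the only new feature is that the $c\neq 0$ part a priori also carries a non-holomorphic term proportional to $1/y$ in its $T$-invariant component; but holomorphy of $E_{2,w}$ — Lemma~\ref{la:weight-2-eisenstein-series}, using that $\rho'$ has no isotrivial part — forces that term to vanish, so again $c_\infty(E_{2,w})=w$ and one concludes as before. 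The step I expect to be most delicate is precisely this constant-term bookkeeping: tracking which $\Gamma_\infty$-cosets carry which $\rho$-twists and verifying that after summation only the trivial coset survives in the constant term, and in weight $2$ that the Hecke trick introduces no spurious constant. By contrast, the reduction to $-I\in\ker\rho$ and the convergence of the $E_{k,v}$ themselves are routine.
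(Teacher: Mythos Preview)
Your argument is correct and follows the same core idea as the paper: the inner summation over $\Gamma_\infty(v)\backslash\Gamma_\infty$ in the definition of $E_{k,v}$ is precisely the projection $P$ of $V(\rho)$ onto the $T$-invariants, so $E_{k,v}=E_{k,Pv}$ and hence $\rmE_k(\rho)=\Phi\big(V(\rho)(1)_T\big)$. The paper's proof records only this observation and stops; it does not address injectivity of $\Phi$ on $V(\rho)(1)_T$ at all, leaving that as an implicit standard fact.

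What you add is the inverse map via the constant term $c_\infty$, together with the Lipschitz-type verification that the $c\ne 0$ cosets contribute no constant term, and the separate handling of $k=2$ via Lemma~\ref{la:weight-2-eisenstein-series}. This makes your write-up strictly more complete than the paper's, while remaining the same strategy. One small remark: your reduction ``we may assume $-I$ acts trivially'' is exactly what makes the paper's assertion $\Gamma_\infty(v)=\Gamma_\infty\cap\Gamma(N)$ literally true (otherwise $\Gamma_\infty(v)$ need not contain $-I$); so your extra sentence there is not just cosmetic but actually closes a gap in the paper's one-line argument.
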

\begin{proof}
We have $\Gamma_\infty(v) = \Gamma_\infty(N) := \Gamma_\infty \cap \Gamma(N)$ for some $N$.  Summation over $\Gamma_\infty(N) \backslash \Gamma_\infty$ in the definition of Eisenstein series corresponds to the projection onto $V(\rho)(1)_T \subseteq V(\rho)$, where $V(\rho)$ is decomposed with respect to eigenvalues of~$T$.
\end{proof}

\subsection{Hecke operators acting on Eisenstein series}

Our proof of the Main Theorem requires the following statement.
\begin{proposition}
\label{prop:hecke-operator-on-eisenstein-series}
Let $k \ge 2$ be an even integer and $\rho$ a representation of $\SL{2}(\ZZ)$ with finite index kernel.  For every positive integer~$M$, we have
\begin{gather*}
  T_M \rmE_k(\rho) 
\subseteq
  \rmE_k(T_M\, \rho)
\text{.}
\end{gather*}
\end{proposition}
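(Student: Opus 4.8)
The plan is to characterise $\rmE_k(T_M\,\rho)$ as the orthogonal complement of $\rmS_k(T_M\,\rho)$ inside $\rmM_k(T_M\,\rho)$, and then to transport orthogonality of $f$ to cusp forms into orthogonality of $T_M\,f$ to cusp forms by means of the adjunction formula of Proposition~\ref{prop:adjoint-of-hecke-operator}. We already know that $T_M$ sends $\rmM_k(\rho)$ into $\rmM_k(T_M\,\rho)$, so for $f \in \rmE_k(\rho)$ it only remains to show $\langle T_M\,f,\, g \rangle = 0$ for every $g \in \rmS_k(T_M\,\rho)$.

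First I would check that $T_M$ preserves cuspidality: if $g \in \rmS_k(T_M\,\rho)$, then $T_M\,g \in \rmS_k(T_M\,T_M\,\rho)$, since each component $g \big|_k m$ with $m \in \Delta_M$ upper triangular still tends to $0$ as $\tau \ra i\infty$, the M\"obius action of such an $m$ fixing the cusp $i\infty$. Post-composing with the morphism of representations $\pi_{\rm adj} :\, T_M\,T_M\,\rho \thra \rho$ from Proposition~\ref{prop:adjoint-of-hecke-operator}, and using that a morphism of representations induces a map of modular forms carrying cusp forms to cusp forms (apply the dual morphism to the functionals in the defining growth condition), we get $\pi_{\rm adj}(T_M\,g) \in \rmS_k(\rho)$.

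Now the adjunction formula gives $\langle T_M\,f,\, g \rangle = \langle f,\, \pi_{\rm adj}(T_M\,g) \rangle$, and since $f \in \rmE_k(\rho)$ is orthogonal to all cusp forms (as seen by unfolding), the right-hand side vanishes. Hence $T_M\,f \in \rmM_k(T_M\,\rho)$ is orthogonal to $\rmS_k(T_M\,\rho)$. Writing $T_M\,f = h_{\rm Eis} + h_{\rm cusp}$ with respect to the decomposition $\rmM_k(T_M\,\rho) = \rmE_k(T_M\,\rho) \oplus \rmS_k(T_M\,\rho)$ and pairing with $h_{\rm cusp}$, orthogonality of Eisenstein and cusp forms together with definiteness of the Petersson pairing on cusp forms forces $h_{\rm cusp} = 0$, so $T_M\,f \in \rmE_k(T_M\,\rho)$, as claimed. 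For $k = 2$ the same reasoning applies once one recalls $\rmM_2(\bbone) = \{0\}$ (Lemma~\ref{la:weight-2-eisenstein-series}), so that the isotrivial part of $T_M\,\rho$ contributes nothing and $\rmE_2$ is indeed the Eisenstein complement of $\rmS_2$ on the remaining summand $\rho'$.

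I expect the only delicate input is the orthogonal-complement description of the Eisenstein subspace, i.e.\ the splitting $\rmM_k = \rmE_k \oplus \rmS_k$ and definiteness of the regularised Petersson pairing on cusp forms; both are standard for representations with finite index kernel, by reduction to the scalar case via the functionals $\langle\,\cdot\,,\,v\rangle$. If one prefers to avoid invoking these, one can argue directly instead: expand $E_{k,v}$ as its Poincar\'e-type sum, apply $\big|_k m$ term by term, regroup $\gamma m = I_m(\gamma)\,\ov{\gamma m}$ using the cocycle of Section~\ref{ssec:hecke-operators-on-representations}, and recognise $\sum_{m \in \Delta_M} (E_{k,v} \big|_k m) \otimes \frake_m$ as the Eisenstein series $E_{k,W}$ attached to an explicit $W \in V(T_M\,\rho)(1)_T$ built from $v$; here the work is the same flavour of cocycle bookkeeping as in Proposition~\ref{prop:components-of-hecke-induced}.
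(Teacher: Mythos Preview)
Your argument is correct and takes a genuinely different route from the paper. The paper proceeds by direct computation: for $v \in V(\rho)(1)_T$ it sets $T_M\,v := \sum_{m \in \Delta_M} d(m)^{-k}\, v \otimes \frake_m$ and verifies, by comparing $m'$\nbd components and re-indexing the sum over $\Gamma_\infty(M) \backslash \SL{2}(\ZZ)$, that $T_M\,E_{k,v} = M^{k/2}\, E_{k,T_M\,v}$. This is exactly the alternative you sketch in your final paragraph. Your primary argument instead characterises $\rmE_k$ as the Petersson-orthogonal complement of $\rmS_k$ and then uses the adjunction formula of Proposition~\ref{prop:adjoint-of-hecke-operator} together with the (easy) fact that $T_M$ and morphisms of representations preserve cuspidality. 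The trade-off is clear: the paper's computation yields an explicit Eisenstein vector $T_M\,v$ realising $T_M\,E_{k,v}$, which is useful information in its own right, whereas your approach is slicker and more conceptual but imports the splitting $\rmM_k = \rmE_k \oplus \rmS_k$ (not proved in the paper, though standard via reduction to the scalar-valued case) and relies on Proposition~\ref{prop:adjoint-of-hecke-operator}, which appears later in the text and whose proof is arguably more delicate than the direct verification here.
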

\begin{proof}
To reduce technicalities, we focus on the case $k > 2$, so that Eisenstein series can be defined without the Hecke trick.  It suffices to consider Eisenstein series attached to $v \in V(\rho)(1)_T$.  Fix such $v$, set $T_M\,v = \sum_{m \in \Delta_M} d(m)^{-k}\, v \otimes \frake_m$, and consider the Eisenstein series
\begin{gather*}
  E_{k, T_M\, v}
=
  \frac{1}{M}
  \sum_{m \in \Delta_M}
  d(m)^{-k}
  \sum_{\gamma \in \Gamma_\infty(M) \backslash \SL{2}(\ZZ)}
  v \otimes \frake_m \big|_{k, T_M\,\rho}\,\gamma
\text{.}
\end{gather*}
Its $m'$\thdash\ component is equal to
\begin{gather*}
  M^{-\frac{k}{2} - 1}
  \sum_{\substack{m \in \Delta_M\\ \gamma \in \Gamma_\infty(M) \backslash \SL{2}(\ZZ)\\ m\gamma = \gamma' m'}}
  \big( v \big|_k\,m \otimes \frake_m \big) \big|_{k, T_M\,\rho}\,\gamma
=
  M^{-\frac{k}{2} - 1}
  \sum_{\substack{m \in \Delta_M\\ \gamma \in \Gamma_\infty(M) \backslash \SL{2}(\ZZ)\\ m\gamma = \gamma' m'}}
  \big( \rho^{-1}(\gamma') v \big|_k\,\gamma' m' \big) \otimes \frake_{m'} 
\text{.}
\end{gather*}
For every $\gamma' \in \Gamma_\infty(M) \backslash \SL{2}(\ZZ)$, we find unique $m$ and $\gamma$ such that $m \gamma = \gamma' m'$.  Therefore, the above simplifies in the following way:
\begin{gather*}
  M^{-\frac{k}{2} - 1}
  \sum_{\gamma' \in \Gamma_\infty(M) \backslash \SL{2}(\ZZ)}
  \big( v \big|_{k, \rho}\,\gamma' \big) \big|_k \,m' \otimes \frake_{m'}
\,\text{,}
\end{gather*}
which equals the $m'$\nbd component of $M^{-\frac{k}{2}}\, T_M\,E_{k, v}$. Since $m' \in \Delta_M$ was arbitrary, we have shown that
\begin{gather*}
  T_M\,E_{k, v}
=
  M^{\frac{k}{2}} 
  E_{k, T_M\, v}
\,\tx{,}
\end{gather*}
which implies the proposition.
\end{proof}

\subsection{Two particular Eisenstein series}
\label{ssec:level-1-eisenstein-series}

In the proof the main theorem, we need two particular Eisenstein series which we introduce now. For $w \in \CC$ and two Dirichlet characters $\delta$ and $\epsilon$, we set
\begin{gather}
  \sigma_{w, \delta, \epsilon}(n)
=
  \sum_{0 < d \isdiv n} \delta(d) \epsilon(n\slash d)\,d^w
\,\tx{.}
\end{gather}
The trivial Dirichlet character mod~$N$ will be denoted by~$\bbone_N$. If $N = 1$, then we instead write $\bbone$.

To state the next two lemmas, recall from Section~\ref{ssec:hecke-operators-known-constructions} our notations $\rho_N = \Ind_{\Gamma_0(N)}$ and $\rho_\chi = \Ind_{\Gamma_0(N)}\,\chi$ for a Dirichlet character~$\chi$ mod~$N$.
\begin{lemma}
\label{la:fourier-expansion-of-eisenstein-series}
Given an even integer $k \ge 4$ and an even Dirichlet characters $\chi$ mod~$N > 1$, there is an Eisenstein series in $\rmE_k(\rho_\chi)$ whose $\Gamma_0(N)$\thdash\ component (i.e.\ the component that corresponds to the trivial coset in $\Gamma_0(N) \slash \SL{2}(\ZZ)$) has Fourier expansion
\begin{gather*}
  \sum_{n=1}^\infty
  \sigma_{k-1,\chi,\bbone}(n)\, q^n
\,\tx{.}
\end{gather*}
\end{lemma}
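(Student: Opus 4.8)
The plan is to exhibit the required Eisenstein series by realizing it as the image of a level-$1$ Eisenstein series under a vector valued Hecke operator, composed with the twist inclusion from Proposition~\ref{prop:components-of-hecke-induced}. Concretely, I would start from the classical level-$1$ Eisenstein series $E_k \in \rmM_k(\bbone)$, whose Fourier expansion (after suitable normalization) is $\sum_{n \ge 1} \sigma_{k-1}(n)\,q^n + \text{constant}$. The object $\sigma_{k-1,\chi,\bbone}(n) = \sum_{d \mid n}\chi(d)\,d^{k-1}$ is exactly the $n$-th coefficient of the $\chi$-twist of $E_k$ in the arithmetic variable $d$, so the strategy is to twist by $\chi$ on the "numerator" side. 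The twist operator ${\rm twist}_\chi$ produces coefficients $\chi(n)c(n)$, which is not what we want; instead I would use the formula from Proposition~\ref{prop:twists-of-modular-forms} and its proof, namely that $\sum_{b \bmod M} G(\chi, e_M(-b))\, f\big|_k\, m_{M,b}$ with $m_{M,b} = \left(\begin{smallmatrix} M & b \\ 0 & M\end{smallmatrix}\right)$ implements a twist, and feed in $m_{M,b}' = \left(\begin{smallmatrix} 1 & b \\ 0 & M \end{smallmatrix}\right)$ instead (the rescaled version appearing in the displayed computation just before Proposition~\ref{prop:twists-of-modular-forms}), which yields coefficients supported on $\sigma_{k-1,\chi,\bbone}$ up to the behavior at the cusp.

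The key steps, in order: First, take $E_k \in \rmM_k(\bbone) = \rmM_k(\Gamma_0(1))$, which by Proposition~\ref{prop:eisenstein-series-and-T-isotrivial-components} and Proposition~\ref{prop:hecke-operator-on-eisenstein-series} has $T_{N^2}\,E_k \in \rmE_k(T_{N^2}\,\bbone)$. Second, invoke the inclusion $\iota_{\rm twist} :\, \rho_{\chi'} \hra T_{N^2}\,\rho_{\chi}$ from~\eqref{eq:prop:components-of-hecke-induced:twist}; here the base character $\chi$ on $\SL{2}(\ZZ)$ is trivial, so $\rho_\chi = \rho_1 = \bbone$, $\epsilon = \chi$, and $\chi' $ is the mod-$N^2$ character defined by $\chi^2$. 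Since morphisms of representations act on modular forms, $\pi_{\rm twist}$ applied to $T_{N^2}\,E_k$ lands in $\rmE_k(\rho_{\chi'})$. Third — and this is where a little care is needed — I want the resulting Eisenstein series to actually live in $\rmE_k(\rho_\chi)$ with $\chi$ the original mod-$N$ character (not $\chi^2$ mod $N^2$): for this I would instead directly build the Eisenstein series in $\rmE_k(\rho_\chi)$ by summing $v\big|_{k,\rho_\chi}\gamma$ over $\Gamma_\infty(v)\backslash\SL{2}(\ZZ)$ for a cleverly chosen $v \in V(\rho_\chi)(1)_T$, and then only use the Hecke-operator picture to read off the Fourier expansion of the $\Gamma_0(N)$-component. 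Fourth, compute that expansion: the $\Gamma_0(N)$-component of $\Ind(g)$ for $g \in \rmM_k(\Gamma_0(N),\chi)$ is $g$ itself, so I must show the $g$ I produced has $q$-expansion $\sum_{n\ge 1}\sigma_{k-1,\chi,\bbone}(n)q^n$. This is the standard Fourier expansion of the Eisenstein series $E_{k,\chi,\bbone}$ attached to the pair of characters $(\chi,\bbone)$ (cf.\ the classical formulas, e.g.\ in Weisinger~\cite{weisinger-1977} or Diamond–Shurman), and the fact that $\chi$ is even and $N>1$ guarantees it has no constant term and is a genuine holomorphic Eisenstein series of weight $k\ge 4$.

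The main obstacle I anticipate is bookkeeping rather than conceptual: matching the normalizing constant $M^{-1}\sum_b G(\epsilon, e_M(-b))$ and the shift between $m_{M,b}$ and $\left(\begin{smallmatrix}1 & b\\0 & M\end{smallmatrix}\right)$ with the precise scaling in the definition~\eqref{eq:def-hecke-operator-on-reps} of $T_M$ on modular forms (which carries a factor $(a/d)^{k/2}$), and then tracking how this scaling interacts with the $1/[\Gamma_\infty:\Gamma_\infty(v)]$ in the definition of $E_{k,v}$. In other words, the serious work is to pin down $v \in V(\rho_\chi)(1)_T$ so that $E_{k,v}$ has exactly the stated normalization — I would do this by expanding $E_{k,v}$ as a sum over $\Gamma_\infty(v)\backslash\SL{2}(\ZZ)$, splitting into cusps, using that the component at $\infty$ collects the terms with lower-left entry divisible by $N$, and identifying the resulting divisor sum. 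The evenness of $\chi$ enters precisely to ensure $E_{k,v}\ne 0$ (the $\gamma$ and $-\gamma$ contributions do not cancel), and $N>1$ ensures the constant term vanishes. Once $v$ is fixed, the Fourier expansion is a direct (if tedious) unfolding, and I would cite~\cite{weisinger-1977} for the closed form to avoid reproducing the computation in full.
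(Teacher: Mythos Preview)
Your proposal eventually lands on the correct argument, but only after a long detour that you yourself abandon. The paper's proof is essentially your ``Fourth'' step alone: Miyake (Theorem~7.1.3 and Lemma~7.1.1) constructs a classical Eisenstein series $E_k(z;\chi,\bbone) \in \rmM_k(\Gamma_0(N),\chi)$ whose Fourier expansion at~$\infty$ is precisely $\sum_{n\ge 1}\sigma_{k-1,\chi,\bbone}(n)\,q^n$; applying the induction map~$\Ind$ from~\eqref{eq:def:induction-on-modular-forms} produces an element of $\rmE_k(\rho_\chi)$ whose $\Gamma_0(N)$-component is, by definition of~$\Ind$, the original series. That is the whole proof.

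Your initial plan via $T_{N^2}$ and $\iota_{\rm twist}$ does not work, as you correctly diagnose: the twist $\mathrm{twist}_\chi$ multiplies the $n$-th coefficient by~$\chi(n)$, giving $\chi(n)\sigma_{k-1}(n)$ rather than $\sigma_{k-1,\chi,\bbone}(n)=\sum_{d\mid n}\chi(d)d^{k-1}$, and the target representation is $\rho_{\chi^2}$ mod~$N^2$ rather than $\rho_\chi$. Having noticed this, you pivot in step~Three to ``instead directly build the Eisenstein series in $\rmE_k(\rho_\chi)$'' --- but then you cannot also ``use the Hecke-operator picture to read off the Fourier expansion,'' since you have just discarded that picture as producing the wrong object. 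Drop the first three steps entirely: the Hecke and twist machinery buys you nothing here, and the bookkeeping worries in your final paragraph (matching $(a/d)^{k/2}$, the Gauss-sum normalization, the index $[\Gamma_\infty:\Gamma_\infty(v)]$) evaporate once you go directly through~$\Ind$ applied to the classical series. Your citation of Weisinger or Diamond--Shurman plays the same role as the paper's citation of Miyake.
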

\begin{proof}
Miyake~\cite{miyake-1989} in his Theorem~7.1.3 computes the Fourier expansions of an Eisenstein series (which he denotes by~$E_k(z; \chi, \bbone)$), which is a modular form for the character~$\chi$ by Lemma~7.1.1. Applying the induction map~\eqref{eq:def:induction-on-modular-forms} to Miyake's construction yields exactly the searched for Eisenstein series.
\end{proof}

\begin{lemma}
\label{la:eisenstein-series-nonzero-at-one-cusp}
Fix an even integer $k \ge 4$ and an even Dirichlet character $\chi$ mod~$N$. There is an adjoint pair of inclusion and projection
\begin{alignat}{3}
\label{la:eisenstein-series-nonzero-at-one-cusp:iota-pi}
  \iota_{\diag, \chi} &:\;&
  \rho_N
&\lra
  \rho_\chi \otimes \rho_{\ov\chi}
\,\tx{,}\quad
&
  \frake_{\gamma}
&\lmto
  \frake_{\gamma} \otimes \frake_{\gamma}
\\
  \pi_{\diag, \chi} &:\;&
  \rho_\chi \otimes \rho_{\ov\chi}
&\lra
  \rho_N
\,\tx{,}\quad
&
  \frake_{\gamma} \otimes \frake_{\gamma'}
&\lmto
  \begin{cases}
    \frake_{\gamma}
  \,\tx{,}
  & \tx{if $\gamma = \gamma'$;}
  \\
    0
  \tx{,}
  & \tx{otherwise.}
  \end{cases}
\end{alignat}
Let $v = \frake_{\Gamma_0(N)} \otimes \frake_{\Gamma_0(N)}  \in V\big( \rho_{\chi} \otimes \rho_{\ov\chi} \big)$. Then the Eisenstein series~$E_{k,\chi,\infty} := E_{k, v}$ satisfies
\begin{gather*}
  E_{k,\chi,\infty}
\in
  \iota_{\diag, \chi}\big( \sum_{M \isdiv N} T_{M}\,\rmE_k(\bbone) \big)
\tx{.}
\end{gather*}
\end{lemma}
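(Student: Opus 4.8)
The plan is to evaluate the Eisenstein series $E_{k,\chi,\infty}=E_{k,v}$ in closed form, to recognise its components as a classical Eisenstein series of $\Gamma_0(N)$ supported at $\infty$, and then to write that series as a combination of rescaled level~$1$ Eisenstein series realised by the oldform maps of Proposition~\ref{prop:classical-oldforms}. The assertions about $\iota_{\diag,\chi}$ and $\pi_{\diag,\chi}$ themselves are routine: whenever $I_\gamma^{-1}(\delta)\in\Gamma_0(N)$ its lower right entry is a unit modulo $N$, so $\chi(I_\gamma^{-1}(\delta))\,\ov\chi(I_\gamma^{-1}(\delta))=1$, and comparing the action~\eqref{eq:def:induction-of-representations} of $\rho_\chi\otimes\rho_{\ov\chi}$ on $\frake_\gamma\otimes\frake_\gamma$ with that of $\rho_N$ on $\frake_\gamma$ shows $\iota_{\diag,\chi}$ is $\SL{2}(\ZZ)$-equivariant, while $\pi_{\diag,\chi}$ is its adjoint for the permutation scalar products and $\pi_{\diag,\chi}\circ\iota_{\diag,\chi}=\id$.

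The key computation is that of $E_{k,v}$. By the same cancellation $\Gamma_\infty$ stabilises $v=\frake_{\Gamma_0(N)}\otimes\frake_{\Gamma_0(N)}$ (on the diagonal the $\chi$- and $\ov\chi$-twists cancel, and $T$ has lower right entry $1$ anyway), so $\Gamma_\infty(v)=\Gamma_\infty$ and the normalising index in~\eqref{eq:eisenstein-definition} is $1$; since $v$ is a constant vector, $E_{k,v}(\tau)=\sum_{\gamma\in\Gamma_\infty\backslash\SL{2}(\ZZ)}(c_\gamma\tau+d_\gamma)^{-k}\,(\rho_\chi\otimes\rho_{\ov\chi})(\gamma^{-1})v$, and $(\rho_\chi\otimes\rho_{\ov\chi})(\gamma^{-1})v=\frake_{\ov\gamma}\otimes\frake_{\ov\gamma}$ for $\ov\gamma$ the chosen representative of $\Gamma_0(N)\gamma$, so only diagonal components occur. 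Regrouping by the coset $\beta\in\Gamma_0(N)\backslash\SL{2}(\ZZ)$ of $\gamma$ and using $\Gamma_\infty\subseteq\Gamma_0(N)$ (so the fibre over $\beta$ is $\Gamma_\infty\backslash\Gamma_0(N)$), the $\frake_\beta\otimes\frake_\beta$ component of $E_{k,v}$ equals $(E^{\Gamma_0(N),\infty}_k|_k\beta)(\tau)$ with $E^{\Gamma_0(N),\infty}_k:=\sum_{\delta\in\Gamma_\infty\backslash\Gamma_0(N)}1|_k\delta\in\rmM_k(\Gamma_0(N))$ the weight~$k$ Eisenstein series of constant term $1$ at $\infty$. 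By~\eqref{eq:def:induction-on-modular-forms} this reads $E_{k,\chi,\infty}=\iota_{\diag,\chi}(\Ind_{\Gamma_0(N)}E^{\Gamma_0(N),\infty}_k)$, so it remains to place $\Ind_{\Gamma_0(N)}E^{\Gamma_0(N),\infty}_k$ in $\sum_{M\isdiv N}T_M\,\rmE_k(\bbone)$.

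For this I would invoke the classical identity $E^{\Gamma_0(N),\infty}_k=\sum_{N'\isdiv N}\lambda_{N'}\,{\rm sc}_{N'}(E_k)$ with $\lambda_{N'}\in\CC$, where $E_k$ spans $\rmE_k(\bbone)$: writing $E^{\Gamma_0(N),\infty}_k(\tau)=\tfrac12\sum_{N\isdiv c,\ \gcd(c,d)=1}(c\tau+d)^{-k}$ and resolving $\gcd(c,d)=1$ by M\"obius inversion turns it into a $\CC$-combination of the absolutely convergent series $\sum_{(c,d)\ne(0,0),\,N'\isdiv c}(c\tau+d)^{-k}$ over $N'\isdiv N$, each of which is a scalar multiple of $E_k(N'\tau)={\rm sc}_{N'}(E_k)$. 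Then Proposition~\ref{prop:classical-oldforms} with trivial character and $M=N'$ identifies $\Ind_{\Gamma_0(N')}({\rm sc}_{N'}E_k)$ with $\pi_{\rm old}(T_{N'}E_k)$, and post-composing with the canonical inclusion $\rho_{N'}\hookrightarrow\rho_N$ (transitivity of induction, i.e.\ the oldform degeneracy map) exhibits $\Ind_{\Gamma_0(N)}({\rm sc}_{N'}E_k)$ as the image of $T_{N'}E_k\in T_{N'}\,\rmE_k(\bbone)$ under a homomorphism $T_{N'}\bbone\to\rho_N$; see also Proposition~\ref{prop:hecke-operator-on-eisenstein-series}. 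Summing over $N'\isdiv N$ and applying $\iota_{\diag,\chi}$ then produces $E_{k,\chi,\infty}$, and the case $N=1$ is trivial since there $\iota_{\diag,\chi}=\id$ and $E_{k,\chi,\infty}=E_k$. The part I expect to require care is the middle paragraph --- keeping the cocycle $I_\gamma$ straight, checking that the off-diagonal components vanish exactly, and correctly reassembling the diagonal ones into $\Ind_{\Gamma_0(N)}E^{\Gamma_0(N),\infty}_k$; the final paragraph is then a routine marriage of the classical structure of $\Gamma_0(N)$-Eisenstein spaces with the oldform identifications already in hand.
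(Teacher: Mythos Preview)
Your proof is correct and follows essentially the same route as the paper's. The paper's argument is simply a terser version of yours: it reduces to showing that $E_{k,v'}$ with $v'=\frake_{\Gamma_0(N)}\in V(\rho_N)$ lies in $\sum_{M\isdiv N}T_M\,\rmE_k(\bbone)$, observes that $E_{k,v'}$ corresponds under~\eqref{eq:def:induction-on-modular-forms} to the $\Gamma_0(N)$ Eisenstein series nonvanishing only at~$\infty$, invokes ``classical theory'' to say this is a level~$1$ oldform, and then applies Proposition~\ref{prop:classical-oldforms}. Your middle paragraph makes the reduction $E_{k,v}=\iota_{\diag,\chi}(\Ind_{\Gamma_0(N)}E_k^{\Gamma_0(N),\infty})$ explicit (the cancellation $\chi\ov\chi=1$ on the diagonal is exactly what the paper leaves as a ``straightforward computation''), and your final paragraph supplies the M\"obius inversion that the paper hides behind the phrase ``classical theory''.
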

\begin{proof}
It is a straight forward computation to verify that $\iota_{\diag, \chi}$ is an inclusion, and that it is adjoint to $\pi_{\diag, \chi}$. To establish the remaining part of the lemma, is suffices to show that the Eisenstein series $E_{k, v'}$ for $v' = \frake_{\Gamma_0(N)} \in V(\rho_N)$ lies in $\sum_{M \isdiv N} T_M\, \rmE_k(\bbone)$. Observe that $E_{k,v'}$ corresponds, under the map~(\ref{eq:def:induction-on-modular-forms}), to an Eisenstein series that vanishes at every cusp but $\infty$. Such a series is an oldform coming from level~$1$, by classical theory. Apply Proposition~\ref{prop:classical-oldforms}, which says that oldforms can be constructed using vector valued Hecke operators, to finish the proof.
\end{proof}

\subsection{A pairing of modular forms}

Let $\rho$ and $\rho_\rmE$ be two representations of $\SL{2}(\ZZ)$ with finite index kernel.  For even~$l$ satisfying $2 \le l \le k - 2$, we define
\begin{gather*}
  F :\;
  \rmE_{k - l}(\rho_\rmE) \otimes V(\ov{\rho} \otimes \rho_\rmE)^\vee
\lra
  \Hom\big(\rmS_{k}(\rho), \CC \big)
\text{,}\quad
  E \otimes v
\lmto
  \big(f \mapsto \big\langle f,\, E \otimes E_{l, v} \big\rangle_\iota \big)
\,\text{,}
\end{gather*}
where the scalar product is taken with respect to the canonical inclusion~$\iota$ of the trivial representation into
\begin{gather*}
  \rho \otimes \ov{\rho_\rmE} \otimes \big( \rho \otimes \ov{\rho_\rmE} \big)^\vee
=
  \rho
  \otimes
  \ov{\rho_\rmE \otimes \big( \ov\rho \otimes \rho_\rmE \big)^\vee}
\tx{.}
\end{gather*}

\begin{proposition}
\label{prop:evaluation-of-pairing}
For $k \ge 4$, $2 \le l \le k - 2$, and two $\SL{2}(\ZZ)$\nbd representations $\rho$ and $\rho_E$, let $E \in \rmE_{k-l}(\rho_E)$, $f \in \rmS_k(\rho)$, and $v \in V(\ov{\rho} \otimes \rho_E)^\vee(1)_T$. Under these assumptions we have
\begin{gather}
  F(E \otimes v)(f)
=
 \frac{\Gamma(k-1)}{(4 \pi)^{k - 1}}\,
  \sum_{0 \le n \in \QQ} n^{1 - k}\;
  \ov{v}\big( c(f;\, n) \otimes \ov{c(E;\, n)} \big)
\tx{,}
\end{gather}
where $\ov v$ is the complex conjugate of~$v$.
\end{proposition}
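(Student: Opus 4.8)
The plan is to evaluate $F(E\otimes v)(f)=\langle f,\,E\otimes E_{l,v}\rangle_\iota$ by Rankin--Selberg unfolding of the Eisenstein series $E_{l,v}$. Write $\sigma:=(\ov\rho\otimes\rho_E)^\vee$, which in the unitary setting is canonically identified with $\rho\otimes\ov{\rho_E}$; then $E_{l,v}\in\rmE_l(\sigma)$ and $(f\otimes\ov E)(\tau)=f(\tau)\otimes\ov{E(\tau)}$ takes values in $V(\rho)\otimes\ov{V(\rho_E)}=V(\sigma)$. With $\iota(1)=\sum_w w\otimes\ov w$ the canonical copy of $\bbone$ in $V(\sigma)\otimes\ov{V(\sigma)}$ ($w$ running over an orthonormal basis of $V(\sigma)$), a direct check identifies the integrand of the scalar product \eqref{eq:def:scalar-product} with $\big\langle (f\otimes\ov E)(\tau),\,E_{l,v}(\tau)\big\rangle_\sigma\,y^{k-2}\,dx\,dy$, the pairing being the $\SL{2}(\ZZ)$-equivariant inner product on $V(\sigma)$. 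First I would substitute the defining series $E_{l,v}=\tfrac1{[\Gamma_\infty:\Gamma_\infty(v)]}\sum_{\gamma\in\Gamma_\infty(v)\backslash\SL{2}(\ZZ)}v\big|_{l,\sigma}\gamma$ and unfold: since the seed $v\big|_{l,\sigma}I=v$ is constant and, by invariance of the measure and $\SL{2}(\ZZ)$-equivariance of the pairing, the function $\Phi(\tau)=\big\langle (f\otimes\ov E)(\tau),\,v\big\rangle_\sigma\,y^{k}$ is $\Gamma_\infty(v)$-invariant, the coset sum collapses the domain to $\Gamma_\infty(v)\backslash\HS$ and one obtains $\tfrac1{[\Gamma_\infty:\Gamma_\infty(v)]}\int_{\Gamma_\infty(v)\backslash\HS}\Phi(\tau)\,y^{-2}\,dx\,dy$. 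The hypothesis $v\in V(\ov\rho\otimes\rho_E)^\vee(1)_T$ is used here to force $\langle T\rangle\subseteq\Gamma_\infty(v)$, so that $\Gamma_\infty(v)\backslash\HS$ is the vertical strip $\{0\le x<1,\ y>0\}$ on which $\Phi$ descends; the routine $-I$-bookkeeping is absorbed by the index factor and leaves overall constant $1$.

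On the strip I would insert the Fourier expansions $f(\tau)=\sum_{0<n\in\QQ}c(f;n)q^n$ (the constant term vanishing as $f$ is a cusp form) and $E(\tau)=\sum_{0\le n\in\QQ}c(E;n)q^n$, so that $\big\langle (f\otimes\ov E)(\tau),\,v\big\rangle_\sigma=\sum_{m,m'}e^{2\pi i(m-m')x}e^{-2\pi(m+m')y}\big\langle c(f;m)\otimes\ov{c(E;m')},\,v\big\rangle_\sigma$. The point of $v\in V(\sigma)(1)_T$ re-enters: $c(f;m)$ lies in the $e^{2\pi im}$-eigenspace of $\rho(T)$ and $\ov{c(E;m')}$ in the $e^{-2\pi im'}$-eigenspace of $\ov{\rho_E}(T)$, hence $c(f;m)\otimes\ov{c(E;m')}$ lies in the $e^{2\pi i(m-m')}$-eigenspace of $\sigma(T)$, which for $m-m'\notin\ZZ$ is orthogonal to the $T$-fixed vector $v$; combined with $\int_0^1 e^{2\pi i(m-m')x}\,dx=0$ for $m-m'\in\ZZ\setminus\{0\}$, only the diagonal terms $m=m'$ survive the $x$-integration. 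What remains is $\sum_{0<m\in\QQ}\big\langle c(f;m)\otimes\ov{c(E;m)},\,v\big\rangle_\sigma\int_0^\infty e^{-4\pi my}y^{k-2}\,dy$, and $\int_0^\infty e^{-4\pi my}y^{k-2}\,dy=\Gamma(k-1)(4\pi m)^{1-k}$; rewriting $\langle\cdot,v\rangle_\sigma$ as $\ov v(\cdot)$ in the notation of the proposition gives the claimed formula.

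Then convergence must be dispatched. For $l\ge 4$ the series defining $E_{l,v}$ converges absolutely, $f$ is a cusp form while $E\otimes E_{l,v}$ has moderate growth, and the unfolded integrand is $O(y^{l/2-2})$ as $y\to 0$, so all of the above is legitimate as stated. For $l=2$ one carries Hecke's parameter along: unfold $\tfrac1{[\Gamma_\infty:\Gamma_\infty(v)]}\sum_\gamma y^s\,v\big|_{2,\sigma}\gamma$ for $\Re s$ large, run the same computation to get $\Gamma(s+k-1)(4\pi)^{1-k-s}\sum_m m^{1-k-s}\,\ov v\big(c(f;m)\otimes\ov{c(E;m)}\big)$, and let $s\to 0$; cuspidality of $f$ makes this Dirichlet series entire in $s$, so the limit is the stated value.

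The main obstacle is not any analytic estimate --- the unfolding is entirely routine --- but the bookkeeping tying together the several conjugate--dual identifications ($\sigma\cong\rho\otimes\ov{\rho_E}$, the canonical $\iota$, the passage from $\langle\cdot,v\rangle_\sigma$ to $\ov v(\cdot)$), together with the $-I$-normalisation, so that $v\in V(\ov\rho\otimes\rho_E)^\vee(1)_T$ is seen to be exactly the hypothesis that collapses the $x$-integral onto the diagonal and pins the constant to $1$. I expect this, together with the $l=2$ limiting argument, to consume most of the write-up.
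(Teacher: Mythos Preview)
Your proposal is correct and follows essentially the same route as the paper: unfold the Petersson integral against the Eisenstein series $E_{l,v}$ to the strip $\Gamma_\infty\backslash\HS$, integrate in $x$ to pick out the diagonal $m=m'$, and evaluate the $y$-integral as $\Gamma(k-1)(4\pi m)^{1-k}$. Your write-up is actually a bit more explicit than the paper's on two points the paper leaves to the reader --- the $T$-eigenspace argument explaining why only $m=m'$ survives when Fourier indices are rational, and the Hecke-trick continuation for $l=2$ --- but the argument is the same.
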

\begin{proof}
For simplicity, we assume that $l, k - l > 2$, so that we need not apply the Hecke trick.  The reader readily verifies that all arguments remain valid after introducing auxiliary variables $s_1$ and $s_2$ that tend to~$0$ and factors $y^{s_1}$, $y^{s_2}$.

Set $\sigma = (\ov{\rho} \otimes \rho_\rmE)^\vee$. We unfold the Petersson scalar product to obtain an explicit formula for $\langle f,\, E \otimes E_{l, v} \rangle_\iota$:
\begin{align*}
&
  \int_{\SL{2}(\ZZ) \backslash \HS}
  \Big\langle
  \big( f \otimes \ov{( E \otimes E_{l, v} )} \big)(\tau),\, \iota(1)
  \Big\rangle
  \;\frac{d\!x\,d\!y}{y^{2-k}}
=
  \int_{\SL{2}(\ZZ) \backslash \HS}
  \Big\langle
  \Big( f \otimes
  \ov{\big( E \otimes \sum_\gamma v \big|_{l, \sigma}\, \gamma \big)}
  \Big)(\tau),\, \iota(1)
  \Big\rangle
  \;\frac{d\!x\,d\!y}{y^{2-k}}
\\
={}&
  \int_{\SL{2}(\ZZ) \backslash \HS}
  \sum_\gamma 
  \big( \ov{v \big|_{l, \sigma}\, \gamma} \big)
  \big( f \otimes \ov E \big)(\tau)
  \;\frac{d\!x\,d\!y}{y^{2-k}}
=
  \int_{\SL{2}(\ZZ) \backslash \HS}
  \sum_\gamma 
  \big( \ov{v \big|_{l, \sigma}\, \gamma} \big)
  \big( f \otimes \ov E \big|_{k-l, \rho \otimes \ov{\rho_E}}\, \gamma \big)(\tau)
  \;\frac{d\!x\,d\!y}{y^{2-k}}
\\
={}&
  \int_{\SL{2}(\ZZ) \backslash \HS}
  \sum_\gamma
  \Big( \ov{v} \big( f \otimes \ov E \big)(\tau) \Big) \Big|_k\, \gamma
  \;\frac{d\!x\,d\!y}{y^{2-k}}
=
  \int_{\Gamma_\infty \backslash \HS}
  \ov{v} \Big( \big( f \otimes \ov E \big)(\tau) \Big)
  \;\frac{d\!x\,d\!y}{y^{2-k}}
\,\tx{.}
\end{align*}
Since $f$ is a cusp form, all the displayed integrals converge absolutely. In the first equality we have inserted the definition of~$E_{l,v}$. The second equality holds by virtue of the definition of~$\iota$. More precisely, we have $\iota(1) = \sum_w w \otimes w^\vee$ where $w$ runs through an orthonormal basis of $V(\rho \otimes \ov{\rho_\rmE})$ and $w^\vee$ denotes a corresponding dual basis element. For the third equality, we have employed modular invariance of $f \otimes \ov E$. The fourth equality follows from $(\sigma(\gamma) v)(\sigma^\vee(\gamma) w) = v(w)$, which is true for all $w \in V(\sigma)^\vee$. The final equality is based on $\SL{2}(\ZZ)$\nbd invariance of the measure $y^{-2} d\!x d\!y$.

As our next step, we carry out the integral with respect to $x$. Expanding $f \otimes E$ into a Fourier series, we obtain
\begin{gather*}
  \ov v \big( (f \otimes \ov E)(\tau) \big)
=
  \sum_{n,n'}
  \ov v \big( c(f;\,n) \otimes \ov{c(E;\,n')} \big)\,
  \exp\big( 2 \pi i (n \tau - n' \ov\tau) \big)
\,\tx{.}
\end{gather*}
The integral with respect to~$x$ picks up terms with $n = n'$. We may interchange integration and summation, since the resulting right hand side converges absolutely. We therefore obtain
\begin{gather*}
  \langle f,\, E \otimes E_{l, v} \rangle
=
  \int_0^\infty
  \sum_n
  \ov v \big( c(f;\,n) \otimes \ov{c(E;\,n)} \big)\,
  \exp(- 4 \pi\, n y)
  \;\frac{d\!y}{y^{2 - k}}
=
  \frac{\Gamma(k-1)}{(4 \pi)^{k - 1}}\!
  \sum_{0 \le n \in \QQ}\!\! n^{1 - k}\,
  v\big( c(f;\, n) \otimes \ov{c(E;\, n)} \big)
\,\tx{.}
\end{gather*}
\end{proof}

\section{Products of Eisenstein series}

\subsection{Rankin convolutions}
\label{ssec:rankin-convolutions}

The next proposition generalizes Rankin's~\cite{rankin-1952} statement about $L$\nbd series of modular forms for~$\SL{2}(\ZZ)$.
\begin{proposition}
\label{prop:product-decomposition-of-convolution}
Let $f \in \rmM_k(\chi)$ be a newform with Fourier coefficients $c(n)$.  Then for Dirichlet characters $\delta$ and $\epsilon$, and two complex variables $s, w$ with $\Re s > \Re w + \frac{k+3}{2}$, we have
\begin{gather*}
  \sum_{n = 1}^\infty \sigma_{w, \delta, \epsilon}(n) c(n)\, n^{-s}
=
  \frac{L(f \times \epsilon, s) L(f \times \delta, s - w)}
       {L(\chi \delta \epsilon, 2s - w + 1 - k)}
\,\text{.}
\end{gather*}
\end{proposition}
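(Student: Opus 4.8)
The plan is to prove the identity factor by factor as an equality of Euler products; the entire content is a single algebraic identity between local Euler factors.

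First I would record that, $f$ being a newform, its coefficients $c(n)$ are multiplicative and obey the Hecke recursion $c(p^{j+1}) = c(p)\,c(p^{j}) - \chi(p)\,p^{k-1}\,c(p^{j-1})$ at each prime~$p$, and that $n \mapsto \sigma_{w,\delta,\epsilon}(n)$ is multiplicative, being the Dirichlet convolution of the multiplicative functions $d \mapsto \delta(d)\,d^{w}$ and $e \mapsto \epsilon(e)$. Hence $n \mapsto \sigma_{w,\delta,\epsilon}(n)\,c(n)$ is multiplicative. For $s$ in the stated half-plane the left hand side converges absolutely (using the trivial Hecke bound $c(n) = O(n^{k/2})$ for cusp forms together with $|\sigma_{w,\delta,\epsilon}(n)| \le d(n)\max(1, n^{\Re w})$), and so do the three Dirichlet series on the right, all of them lying in their common domain of absolute convergence there; it therefore suffices to prove the equality of the $p$-th Euler factors for every prime~$p$. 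Writing $X = p^{-s}$, the $p$-th factor of the left hand side is $\sum_{j\ge 0}\sigma_{w,\delta,\epsilon}(p^{j})\,c(p^{j})\,X^{j}$.

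Next I would introduce at each~$p$ the Satake parameters $\alpha_{p},\beta_{p}$ of~$f$, characterised by $\alpha_{p}+\beta_{p} = c(p)$ and $\alpha_{p}\beta_{p} = \chi(p)\,p^{k-1}$ (with the convention $\beta_{p}=0$, hence $\chi(p)=0$, whenever $p \mid N$), together with $\mu_{p} = \delta(p)\,p^{w}$ and $\nu_{p} = \epsilon(p)$. The Hecke recursion gives $c(p^{j}) = \sum_{a+b=j}\alpha_{p}^{a}\beta_{p}^{b}$, while directly from the definition $\sigma_{w,\delta,\epsilon}(p^{j}) = \sum_{a+b=j}\mu_{p}^{a}\nu_{p}^{b}$. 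On the other side, taking $L(f\times\epsilon,s)$ to mean the Dirichlet series $\sum_{n\ge1}c(n)\epsilon(n)\,n^{-s}$, its $p$-th Euler factor is $\big((1-\alpha_{p}\nu_{p}X)(1-\beta_{p}\nu_{p}X)\big)^{-1}$; replacing $s$ by $s-w$, i.e.\ $X$ by $p^{w}X$, shows the $p$-th factor of $L(f\times\delta,s-w)$ equals $\big((1-\alpha_{p}\mu_{p}X)(1-\beta_{p}\mu_{p}X)\big)^{-1}$; and since $(\chi\delta\epsilon)(p)\,p^{-(2s-w+1-k)} = \alpha_{p}\beta_{p}\mu_{p}\nu_{p}X^{2}$, the $p$-th factor of $L(\chi\delta\epsilon,2s-w+1-k)^{-1}$ is $1-\alpha_{p}\beta_{p}\mu_{p}\nu_{p}X^{2}$. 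Thus the proposition reduces to the formal power series identity
\[
  \sum_{j\ge 0}\Big(\sum_{a+b=j}\alpha^{a}\beta^{b}\Big)\Big(\sum_{a+b=j}\mu^{a}\nu^{b}\Big)X^{j}
  =
  \frac{1-\alpha\beta\mu\nu X^{2}}{(1-\alpha\mu X)(1-\alpha\nu X)(1-\beta\mu X)(1-\beta\nu X)},
\]
an identity in the polynomial ring in $\alpha,\beta,\mu,\nu$.

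Finally I would prove this local identity. By continuity it is enough to treat the generic case $\alpha\ne\beta$, $\mu\ne\nu$, where the coefficient on the left equals $(\alpha^{j+1}-\beta^{j+1})(\mu^{j+1}-\nu^{j+1})/\big((\alpha-\beta)(\mu-\nu)\big)$; expanding the numerator and summing the four resulting geometric series in $(\alpha\mu)^{j}$, $(\alpha\nu)^{j}$, $(\beta\mu)^{j}$, $(\beta\nu)^{j}$, then clearing the common denominator $(1-\alpha\mu X)(1-\alpha\nu X)(1-\beta\mu X)(1-\beta\nu X)$ and comparing the coefficients of $1,X,X^{2},X^{3}$ (a short computation with the elementary symmetric functions of $\alpha\mu,\alpha\nu,\beta\mu,\beta\nu$) produces the numerator $1-\alpha\beta\mu\nu X^{2}$. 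One checks the identity persists when any of $\beta_{p},\mu_{p},\nu_{p}$ vanishes, so primes dividing~$N$ or the moduli of $\delta$ and~$\epsilon$ need no separate treatment. Substituting the three $L$-factor descriptions back in and taking the product over all primes~$p$ then yields the asserted equality. There is no genuine obstacle: the only piece of real content is the local Euler-factor identity above, a finite symmetric-function computation, and the sole points requiring care are the uniform handling of the bad primes and the verification that the stated inequality on $s$ places us in the common region of absolute convergence of all four Dirichlet series involved.
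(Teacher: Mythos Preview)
Your proof is correct, but it takes a different route from the paper's. The paper argues globally: it starts from the product $L(f\times\epsilon,s)\,L(f\times\delta,s-w) = \sum_{m,n}\epsilon(m)\delta(n)\,c(m)c(n)\,(mn)^{-s}n^{w}$, inserts the Hecke multiplicativity relation $c(m)c(n) = \sum_{d\mid(m,n)}\chi(d)d^{k-1}c(mn/d^{2})$, changes variables $m\mapsto dm$, $n\mapsto dn$, and peels off the geometric sum over~$d$ as $L(\chi\delta\epsilon,2s-w+1-k)$; what remains is rewritten as the Dirichlet series of $\sigma_{w,\delta,\epsilon}(m)c(m)$. No Euler products, no Satake parameters, no local computation.

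Your approach instead localises: you factor both sides into Euler products and reduce everything to the single formal identity $\sum_{j}h_{j}(\alpha,\beta)h_{j}(\mu,\nu)X^{j} = (1-\alpha\beta\mu\nu X^{2})\big/\prod(1-\ast X)$ at each prime. This is a genuinely different organisation of the same content. The paper's manipulation is shorter and avoids the case analysis at bad primes that you have to mention (though you handle it cleanly by specialising parameters to zero). Your version, on the other hand, makes the symmetric-function structure transparent and explains conceptually why the degree-$2$ numerator appears; it is also the form that generalises naturally to higher-rank Rankin--Selberg identities. Either argument is perfectly adequate here.
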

\begin{proof}
The proof is completely analogous to the one in~\cite{rankin-1952}. By assumptions, we have for $m, n \in \ZZ$ that
\begin{gather*}
  c(m) c(n)
=
  \sum_{d \isdiv (m,n)} \chi(d) d^{k-1}\, c\big(\frac{mn}{d^2} \big)
\text{.}
\end{gather*}
Expanding $L(f \times \epsilon, s) L(f \times \delta, s - w)$, then applying that relation, and finally simplifying, we obtain
\begin{align*}
  \sum_{\substack{m,n\\d \isdiv (m, n)}}
  \chi(d) \epsilon(m) \delta(n)\,
  c\big( \frac{mn}{d^2} \big)\, d^{k-1} (mn)^{-s} n^w
=  \sum_d \chi\delta\epsilon(d) d^{k - 1 + w - 2s}\,
  \sum_{m,n}  \epsilon(m) \delta(n) c(mn) (mn)^{-s} n^w
\,\text{.}
\end{align*}
This equals the desired factorization
\begin{gather*}
  L(\chi\delta\epsilon, 2s - w + 1 - k)\,
  \sum_{m;\, n \isdiv m}  \epsilon(\tfrac{m}{n}) \delta(n) c(m) m^{-s} n^w
=
  L(\chi\delta\epsilon, 2s - w + 1 - k)\,
  \sum_{m}  \sigma_{w, \delta, \epsilon}(m) c(m) m^{-s}
\,\tx{.}
\end{gather*}
\end{proof}

\subsection{Proof of the Main Theorem}

Throughout the subsection, we fix positive, even integers~$k$ and~$l$ such that $l, k-l \ge 4$. Set $\rho_{T_N} = T_N\,\bbone$ for any $N$. Note that these are the same representations $\rho_{T_N}$ that we referred to in the introduction. For a finite dimensional complex representation $\rho$ of $\SL{2}(\ZZ)$ we define the statement ${\rm Span}(\rho)$ as
\begin{gather}
\label{eq:proof-of-main-theorem:Span-statement}
  {\rm Span}(\rho) \,:\;
  \rmM_k(\rho)
=
  \rmE_k(\rho)
  \;+
  \lspan_{\substack{ 0 < N, N' \in \ZZ \\ \phi :\, \rho_{T_N} \otimes \rho_{T_{N'}} \ra \rho}}\!\!
  \phi\Big(\;  T_N\,\rmE_{l}(\bbone) \otimes T_{N'}\,\rmE_{k - l}(\bbone) \; \Big)
\tx{.}
\end{gather}
The Main Theorem says that ${\rm Span}(\rho)$ is true for all $\rho$ whose kernel is a congruence subgroup. It is further convenient to abbreviate
\begin{gather}
\label{eq:proof-of-main-theorem:right-hand-side-EE}
  \rmE\rmE_k(\rho)
:=
  \rmE_k(\rho)
  \,+\,
  \lspan_{\substack{ 0 < N, N' \in \ZZ \\ \phi :\, \rho_{T_N} \otimes \rho_{T_{N'}} \ra \rho}}\!\!
  \phi\Big(\; T_N\,\rmE_{l}(\bbone) \otimes T_{N'}\,\rmE_{k - l}(\bbone) \;\Big)
\tx{.}
\end{gather}
Since the weight is typically clear from the context, we will often suppress the subscript, writing $\rmE\rmE(\rho)$.

\begin{lemma}
\label{la:EE-stable-under-morphisms-and-hecke}
For any homomorphism of representations $\psi:\, \rho \ra \rho'$ and for any positive integer~$M$ we have
\begin{gather*}
  \psi\big( \rmE\rmE(\rho) \big)
\subseteq
  \rmE\rmE(\rho')
\quad\tx{and}\quad
  T_M\big( \rmE\rmE(\rho) \big)
\subseteq
  \rmE\rmE\big( T_M\, \rho \big)
\tx{.}
\end{gather*}
\end{lemma}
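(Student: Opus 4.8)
The plan is to establish the two inclusions separately, and in each case to split $\rmE\rmE(\rho)$ into its two defining summands, the Eisenstein space $\rmE_k(\rho)$ and $\lspan_\phi \phi\big( T_N\,\rmE_l(\bbone) \otimes T_{N'}\,\rmE_{k-l}(\bbone) \big)$, treating them one at a time. Since the weights $l$ and $k-l$ are at least~$4$, no Hecke trick issues arise.

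The inclusion $\psi\big( \rmE\rmE(\rho) \big) \subseteq \rmE\rmE(\rho')$ should be soft. A homomorphism $\psi :\, \rho \ra \rho'$ intertwines the slash action on constants, $\psi\big( v \big|_{k, \rho}\, \gamma \big) = \psi(v) \big|_{k, \rho'}\, \gamma$, and satisfies $\Gamma_\infty(v) \subseteq \Gamma_\infty(\psi(v))$; regrouping the defining sum of $E_{k, v}$ over the possibly larger stabilizer and comparing the normalizing constants gives $\psi(E_{k, v}) = E_{k, \psi(v)}$, whence $\psi\big( \rmE_k(\rho) \big) \subseteq \rmE_k(\rho')$. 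For the other summand, if $\phi :\, \rho_{T_N} \otimes \rho_{T_{N'}} \ra \rho$ then $\psi \circ \phi :\, \rho_{T_N} \otimes \rho_{T_{N'}} \ra \rho'$ is again one of the homomorphisms indexing $\rmE\rmE(\rho')$, and $\psi\big( \phi( T_N\,\rmE_l(\bbone) \otimes T_{N'}\,\rmE_{k-l}(\bbone) ) \big) = (\psi \circ \phi)\big( T_N\,\rmE_l(\bbone) \otimes T_{N'}\,\rmE_{k-l}(\bbone) \big) \subseteq \rmE\rmE(\rho')$. I expect no difficulty here.

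For $T_M\big( \rmE\rmE(\rho) \big) \subseteq \rmE\rmE(T_M\,\rho)$ the Eisenstein summand is exactly Proposition~\ref{prop:hecke-operator-on-eisenstein-series}, giving $T_M\,\rmE_k(\rho) \subseteq \rmE_k(T_M\,\rho) \subseteq \rmE\rmE(T_M\,\rho)$. For the product summand I would first observe that $T_M$ is functorial for homomorphisms also on the level of modular forms, $T_M(\phi \circ h) = (T_M\,\phi) \circ (T_M\,h)$ with $T_M\,\phi$ as in~\eqref{eq:prop:hecke-operator-comonoidal-on-reps:TMphi}, and combine this with Theorem~\ref{thm:inclusion-of-hecke-operator-tensor-product-on-modular-forms} to rewrite, for $e \in \rmE_l(\bbone)$ and $e' \in \rmE_{k-l}(\bbone)$,
\begin{gather*}
  T_M\big( \phi( T_N\,e \otimes T_{N'}\,e' ) \big)
=
  (T_M\,\phi)\Big( \pi_{M, \rho_{T_N}, \rho_{T_{N'}}}\big( (T_M\,T_N\,e) \otimes (T_M\,T_{N'}\,e') \big) \Big)
\,\text{.}
\end{gather*}
The heart of the matter is then to recognize $T_M\,T_N\,e$ as lying in the image of $T_{MN}\,\rmE_l(\bbone)$: I would show that the map $\chi_{M,N} :\, T_M\,T_N\,\bbone \ra T_{MN}\,\bbone$, $\frake_n \otimes \frake_m \mapsto \frake_{\ov{nm}}$ (for $n \in \Delta_N$, $m \in \Delta_M$), is a homomorphism of $\SL{2}(\ZZ)$\nbd representations, and that its adjoint $\eta_{M,N} :\, T_{MN}\,\bbone \ra T_M\,T_N\,\bbone$ with respect to the scalar products of Lemma~\ref{la:hecke-operator-preserves-unitarity} satisfies $\eta_{M,N}\big( T_{MN}\,e \big) = T_M\,T_N\,e$ for every $e \in \rmM_l(\bbone)$. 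This last identity reduces to the elementary fact that for $m \in \Delta_M$ and $n \in \Delta_N$ the upper triangular matrices $nm$ and $\ov{nm}$ differ by an upper unitriangular integral matrix, hence by a power of $T = \left(\begin{smallmatrix} 1 & 1 \\ 0 & 1 \end{smallmatrix}\right)$, so that $e \big|_l\, nm = e \big|_l\, \ov{nm}$ since $e$ has level~$1$.

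Granting this, $(T_M\,T_N\,e) \otimes (T_M\,T_{N'}\,e') = (\eta_{M,N} \otimes \eta_{M,N'})\big( T_{MN}\,e \otimes T_{MN'}\,e' \big)$, so the composite $\Phi := (T_M\,\phi) \circ \pi_{M, \rho_{T_N}, \rho_{T_{N'}}} \circ (\eta_{M,N} \otimes \eta_{M,N'})$ is a homomorphism $\rho_{T_{MN}} \otimes \rho_{T_{MN'}} \ra T_M\,\rho$ with $\Phi\big( T_{MN}\,e \otimes T_{MN'}\,e' \big) = T_M\big( \phi( T_N\,e \otimes T_{N'}\,e' ) \big)$. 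Since $T_{MN}\,e \in T_{MN}\,\rmE_l(\bbone)$ and $T_{MN'}\,e' \in T_{MN'}\,\rmE_{k-l}(\bbone)$, and $\Phi$ is one of the homomorphisms indexing $\rmE\rmE(T_M\,\rho)$ (with $N, N'$ replaced by $MN, MN'$), this puts $T_M\big( \phi( T_N\,e \otimes T_{N'}\,e' ) \big)$ in $\rmE\rmE(T_M\,\rho)$, and linearity handles the whole span. The step I expect to cost the most is the equivariance of $\chi_{M,N}$, which I anticipate unwinds—as do the analogous verifications in Section~\ref{ssec:hecke-operators-on-representations}—to routine bookkeeping with the cocycle identity $I_m(\gamma_1\gamma_2) = I_m(\gamma_1)\,I_{m\gamma_1}(\gamma_2)$; everything else is formal once Proposition~\ref{prop:hecke-operator-on-eisenstein-series} and Theorem~\ref{thm:inclusion-of-hecke-operator-tensor-product-on-modular-forms} are in hand.
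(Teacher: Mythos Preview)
Your proposal is correct and follows essentially the same approach as the paper's proof. The paper is slightly terser at the key step: it writes a single projection $\pi :\, T_{MN}\,\bbone \otimes T_{MN'}\,\bbone \thra T_M(T_N\,\bbone \otimes T_{N'}\,\bbone)$ and attributes it to~\eqref{eq:prop:hecke-operator-comonoidal-on-reps:comonoidal-coherence}, implicitly passing between $T_M T_N\,\bbone$ and $T_{MN}\,\bbone$, whereas you make this passage explicit via the homomorphism $\chi_{M,N}$ and its adjoint $\eta_{M,N}$; structurally the two arguments are the same.
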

\begin{proof}
To prove the first equality observe that
\begin{multline*}
  \psi\big( \rmE\rmE(\rho) \big)
=
  \psi\Bigg(
  \rmE_{k}(\rho)
  \,+\,
  \lspan_{\substack{ 0 < N, N' \in \ZZ \\
                     \phi :\, \rho_{T_N} \otimes \rho_{T_{N'}} \ra \rho}}\!\!
  \phi\Big(\; T_N\,\rmE_{l}(\bbone) \otimes T_{N'}\,\rmE_{k - l}(\bbone) \;\Big)
  \Bigg)
\\
=
  \psi\big( \rmE_{k}(\rho) \big)
  \,+\,
  \lspan_{\substack{ 0 < N, N' \in \ZZ \\
                     \phi :\, \rho_{T_N} \otimes \rho_{T_{N'}} \ra \rho}}\!\!
  \psi \circ \phi\Big(\; T_N\,\rmE_{l}(\bbone) \otimes T_{N'}\,\rmE_{k - l}(\bbone) \;\Big)
\tx{.}
\end{multline*}
It is clear from the definition of Eisenstein series~\eqref{eq:eisenstein-definition} that $\psi\big( \rmE_{k}(\rho) \big) \subseteq \rmE_k(\rho')$. Further, composition with $\psi$ yields a map
\begin{gather*}
  \psi_\ast :\,
  \Hom\big(T_N\,\bbone \otimes T_{N'}\,\bbone,\, \rho \big)
\lra
  \Hom\big(T_N\,\bbone \otimes T_{N'}\,\bbone,\, \rho' \big)
\tx{.}
\end{gather*}
We thus obtain the desired inclusion into $\rmE\rmE(\rho')$.

We now address the case of Hecke operators. The inclusion of spaces of Eisenstein series~$T_M\, \rmE_k(\rho) \subseteq \rmE_k(T_M\,\rho)$ is stated in Proposition~\ref{prop:hecke-operator-on-eisenstein-series}. We consider the space
\begin{gather*}
  T_M\Big(
  \phi \big( T_N \rmE_l(\bbone) \otimes T_{N'} \rmE_{k-l}(\bbone) \big)
  \Big)
\tx{.}
\end{gather*}
Recall that by Proposition~\ref{prop:hecke-operator-comonoidal-on-reps}, for each $\phi \in \Hom( T_N\,\bbone \otimes T_{N'}\,\bbone,\, \rho)$ there is a homomorphism
\begin{gather*}
  T_M\,\phi :\,
  T_M \big( T_N\,\bbone \otimes T_{N'}\,\bbone \big)
\lra
  T_M\,\rho
\end{gather*}
that satisfies $T_M \circ \phi = (T_M\,\phi) \circ T_M$. Therefore, we have
\begin{multline*}
  T_M\Big(
  \phi \big( T_N \rmE_l(\bbone) \otimes T_{N'} \rmE_{k-l}(\bbone) \big)
  \Big)
=
  (T_M\,\phi)\Big(
  T_M \big( T_N \rmE_l(\bbone) \otimes T_{N'} \rmE_{k-l}(\bbone) \big)
  \Big)
\\
\subseteq
  (T_M\,\phi)\Big( \pi \Big(
  T_{MN} \rmE_l(\bbone) \otimes T_{MN'} \rmE_{k-l}(\bbone) \big)
  \Big) \Big)
\tx{,}
\end{multline*}
where the last inclusion is a direct consequence of Theorem~\ref{thm:inclusion-of-hecke-operator-tensor-product-on-modular-forms} and
\begin{gather*}
  \pi :\,
  T_{MN}\, \bbone \otimes T_{M N'}\, \bbone
\lthra
  T_M \big( T_N\,\bbone \otimes T_{N'}\,\bbone \big)
\end{gather*}
is the projection displayed in~\eqref{eq:prop:hecke-operator-comonoidal-on-reps:comonoidal-coherence}. Inserting this into the definitions of $\rmE\rmE(\rho)$ and $\rmE\rmE(T_M\,\rho)$, we prove the lemma.
\end{proof}

\subsubsection{Two lemmas about~${\rm Span}$}

We now establish two lemmas that will allows us to focus on the case $\rho = \Ind_{\Gamma_0(N)}\,\bbone$ in the actual proof of the main theorem. 
\begin{lemma}
\label{la:proof-of-main-theorem:decomposing-representations}
Given representations $\rho$ and $\rho'$, we have ${\rm Span}(\rho) \wedge {\rm Span}(\rho') \Longleftrightarrow {\rm Span}(\rho \oplus \rho')$.
\end{lemma}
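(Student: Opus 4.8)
The plan is to reduce everything to the canonical decomposition $\rmM_k(\rho \oplus \rho') \cong \rmM_k(\rho) \oplus \rmM_k(\rho')$ from Proposition~\ref{prop:decomposition-of-modular-forms-spaces}, combined with the morphism-stability half of Lemma~\ref{la:EE-stable-under-morphisms-and-hecke}. First I would fix notation: let $\iota_1 :\, \rho \hra \rho \oplus \rho'$ and $\iota_2 :\, \rho' \hra \rho \oplus \rho'$ be the inclusions associated with the given decomposition and let $\pi_1 :\, \rho \oplus \rho' \thra \rho$, $\pi_2 :\, \rho \oplus \rho' \thra \rho'$ be the corresponding projections, so that $\pi_j \circ \iota_j = \id$. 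These are homomorphisms of $\SL{2}(\ZZ)$\nbd representations, hence induce maps on the spaces $\rmM_k(-)$ satisfying the same relations. I would also record the elementary observation that $\rmE\rmE(\sigma) \subseteq \rmM_k(\sigma)$ for every representation $\sigma$, which is immediate from~\eqref{eq:proof-of-main-theorem:right-hand-side-EE}: the Eisenstein summand lies in $\rmM_k(\sigma)$, and each summand $\phi\big( T_N\,\rmE_l(\bbone) \otimes T_{N'}\,\rmE_{k-l}(\bbone) \big)$ is the $\phi$-image of a space of weight-$k$ modular forms, so it too lies in $\rmM_k(\sigma)$. Consequently, establishing ${\rm Span}(\sigma)$ always amounts to establishing the single inclusion $\rmM_k(\sigma) \subseteq \rmE\rmE(\sigma)$.

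For the direction ${\rm Span}(\rho) \wedge {\rm Span}(\rho') \Longrightarrow {\rm Span}(\rho \oplus \rho')$, I would write $\rmM_k(\rho \oplus \rho') = \iota_1\big( \rmM_k(\rho) \big) + \iota_2\big( \rmM_k(\rho') \big)$ using Proposition~\ref{prop:decomposition-of-modular-forms-spaces}, then replace each $\rmM_k$ on the right by $\rmE\rmE$ using the two hypotheses, and finally invoke Lemma~\ref{la:EE-stable-under-morphisms-and-hecke} for the homomorphisms $\iota_1$ and $\iota_2$ to obtain $\iota_1\big( \rmE\rmE(\rho) \big) \subseteq \rmE\rmE(\rho \oplus \rho')$ and $\iota_2\big( \rmE\rmE(\rho') \big) \subseteq \rmE\rmE(\rho \oplus \rho')$. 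This yields $\rmM_k(\rho \oplus \rho') \subseteq \rmE\rmE(\rho \oplus \rho')$, hence ${\rm Span}(\rho \oplus \rho')$.

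For the direction ${\rm Span}(\rho \oplus \rho') \Longrightarrow {\rm Span}(\rho) \wedge {\rm Span}(\rho')$, I would use that $\pi_1 \circ \iota_1 = \id$ makes the induced map $\pi_1 :\, \rmM_k(\rho \oplus \rho') \ra \rmM_k(\rho)$ surjective, so that $\rmM_k(\rho) = \pi_1\big( \rmM_k(\rho \oplus \rho') \big) = \pi_1\big( \rmE\rmE(\rho \oplus \rho') \big) \subseteq \rmE\rmE(\rho)$, where the middle equality is the hypothesis ${\rm Span}(\rho \oplus \rho')$ and the last inclusion is Lemma~\ref{la:EE-stable-under-morphisms-and-hecke} applied to $\pi_1$. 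Together with the trivial reverse inclusion this gives ${\rm Span}(\rho)$, and the symmetric argument with $\pi_2$ gives ${\rm Span}(\rho')$.

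I do not expect any genuine obstacle here: the statement is pure functoriality of $\rmM_k(-)$ and $\rmE\rmE(-)$ with respect to homomorphisms of representations, together with the fact that both send finite direct sums to finite direct sums. The only point that needs any attention is the bookkeeping of the inclusion--projection identities and the remark $\rmE\rmE(\sigma) \subseteq \rmM_k(\sigma)$, which I have isolated at the outset; after that, each implication is a short chain of inclusions and surjections.
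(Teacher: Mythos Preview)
Your proof is correct and follows essentially the same approach as the paper: both directions rest on Proposition~\ref{prop:decomposition-of-modular-forms-spaces} and the functoriality of $\rmE\rmE(-)$ under homomorphisms. The only cosmetic difference is that for the forward implication the paper unwinds the definition of $\rmE\rmE(\rho\oplus\rho')$ directly via the decomposition $\Hom(\rho_{T_N}\otimes\rho_{T_{N'}},\,\rho\oplus\rho')\cong\Hom(\rho_{T_N}\otimes\rho_{T_{N'}},\,\rho)\oplus\Hom(\rho_{T_N}\otimes\rho_{T_{N'}},\,\rho')$, whereas you invoke Lemma~\ref{la:EE-stable-under-morphisms-and-hecke} for the inclusions $\iota_1,\iota_2$ as a black box; your packaging is slightly cleaner but the content is the same.
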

\begin{proof}
Proposition~\ref{prop:decomposition-of-modular-forms-spaces} says that $\rmM_k(\rho \oplus \rho') = \rmM_k(\rho) \oplus \rmM_k(\rho')$. Proposition~\ref{prop:eisenstein-series-and-T-isotrivial-components} implies the analogue for Eisenstein series: We have $\rmE_k(\rho \oplus \rho') = \rmE_k(\rho) \oplus \rmE_k(\rho')$.

For simplicity define
\begin{gather*}
  \rmH(N,N')
=
  \Hom\big( \rho_{T_N} \otimes \rho_{T_{N'}},\, \rho \big)
\quad\tx{and}\quad
  \rmH'(N,N')
=
  \Hom\big( \rho_{T_N} \otimes \rho_{T_{N'}},\, \rho' \big)
\tx{.}
\end{gather*}
Now, assume that ${\rm Span}(\rho)$ and ${\rm Span}(\rho')$ both are true. Then
\begin{multline*}
  \rmM_k(\rho \oplus \rho')
=
  \rmM_k(\rho) \oplus \rmM_k(\rho')
\\
=
  \Bigg(
  \rmE_k(\rho)
  +
  \lspan_{\phi \in \rmH(N,N')}
  \phi \Big( T_N\, \rmE_l(\bbone) \otimes T_{N'} \rmE_{k-l}(\bbone) \Big)
  \Bigg)
  \,\oplus\,
  \Bigg(
  \rmE_k(\rho')
  +
  \lspan_{\phi' \in \rmH'(N,N')}
  \phi' \Big( T_N\, \rmE_l(\bbone) \otimes T_{N'} \rmE_{k-l}(\bbone) \Big)
  \Bigg)
\\
= 
  \rmE_k(\rho \oplus \rho')
  +
  \lspan_{\phi \oplus \phi' \in \rmH(N,N') \oplus \rmH'(N,N')}
  (\phi \oplus \phi') \Big( T_N\, \rmE_l(\bbone) \otimes T_{N'} \rmE_{k-l}(\bbone) \Big)
\tx{.}
\end{multline*}
Since $\Hom\big( \rho_N \otimes \rho_{N'},\, \rho \oplus \rho' \big) = \rmH(N,N') \oplus \rmH'(N,N')$, this shows that ${\rm Span}(\rho \oplus \rho')$ is true.

Conversely, if ${\rm Span}(\rho \oplus \rho')$ is  true, then apply the canonical projections $\pi : \rho \oplus \rho' \ra \rho$ and $\pi' : \rho \oplus \rho' \ra \rho'$ to both sides of the equality
\begin{gather*}
  \rmM_k(\rho \oplus \rho')
=
  \rmE_k(\rho \oplus \rho')
  +
  \lspan_{\phi \oplus \phi' \in \rmH(N,N') \oplus \rmH'(N,N')}
  (\phi \oplus \phi') \Big( T_N\, \rmE_l(\bbone) \otimes T_{N'} \rmE_{k-l}(\bbone) \Big)
\end{gather*}
to find that also ${\rm Span}(\rho)$ and ${\rm Span}(\rho')$ are true.
\end{proof}

\begin{lemma}
\label{la:proof-of-main-theorem:employ-intertwining}
Fixing a congruence subgroup~$\Gamma$ and a character~$\chi$ of~$\Gamma$, suppose that
\begin{gather*}
  \rmM_k(\Gamma, \chi)
=
  \sum_i \psi_i \big( \rmV\rmM_i \big)
\tx{,}\qquad
  \rmV\rmM_i
\subseteq
  \rmM_k(\Gamma_i, \chi_i) 
\end{gather*}
for a finite number of congruence subgroups~$\Gamma_i$, characters~$\chi_i$ of $\Gamma_i$, and maps
\begin{gather*}
  \psi_i :\,
  \rmM_k(\Gamma_i, \chi_i)
\lra
  \rmM_k(\Gamma, \chi)
\tx{.}
\end{gather*}
If for every $i$, there is a positive integer~$M_i$ and a projection
\begin{gather*}
  \pi_i :\,
  T_{M_i}\big( \Ind_{\Gamma_i}\,\chi_i \big)
\thra
  \Ind_{\Gamma}\,\chi
\quad \tx{such that}\quad
  \Big\langle \Ind\,\psi_i(f),\, v \Big\rangle
=
  \Big\langle \pi\big( T_{M_i}\, \Ind(f) \big),\, v \Big\rangle
\end{gather*}
for all $v \in \Ind_\Gamma\, \chi$, then
\begin{gather*}
  \Big(
  \forall i \,:\,
  \Ind\, \rmV\rmM_i
  \subseteq
  \rmE\rmE\big( \Ind_{\Gamma_i}\,\chi_i \big)
  \Big)
\;\Longrightarrow\;
  {\rm Span}\big( \Ind_{\Gamma}\,\chi \big)
\tx{.}
\end{gather*}
\end{lemma}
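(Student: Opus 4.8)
The plan is to reduce everything, through the induction isomorphism $\Ind :\, \rmM_k(\Gamma,\chi) \to \rmM_k(\Ind_\Gamma\,\chi)$, to a single chain of inclusions: the one supplied by the hypothesis $\Ind\,\rmV\rmM_i \subseteq \rmE\rmE(\Ind_{\Gamma_i}\,\chi_i)$, followed by two applications of Lemma~\ref{la:EE-stable-under-morphisms-and-hecke}. Concretely, since $\Ind$ is linear and bijective, applying it to $\rmM_k(\Gamma,\chi) = \sum_i \psi_i(\rmV\rmM_i)$ yields $\rmM_k(\Ind_\Gamma\,\chi) = \sum_i \Ind\big( \psi_i(\rmV\rmM_i) \big)$. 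As $\rmE\rmE(\Ind_\Gamma\,\chi)$ is by definition a subspace of $\rmM_k(\Ind_\Gamma\,\chi)$, proving ${\rm Span}(\Ind_\Gamma\,\chi)$ amounts to the reverse inclusion, and for that it suffices to show $\Ind\big( \psi_i(f) \big) \in \rmE\rmE(\Ind_\Gamma\,\chi)$ for every index $i$ and every $f \in \rmV\rmM_i$.

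Next I would use the intertwining hypothesis to rewrite $\Ind\,\psi_i(f)$. The identity $\big\langle \Ind\,\psi_i(f),\, v \big\rangle = \big\langle \pi_i\big( T_{M_i}\,\Ind(f) \big),\, v \big\rangle$ is assumed for all $v$ in the representation space, and since the coordinate functionals $\langle\,\cdot\,, v\rangle$ separate $V(\Ind_\Gamma\,\chi)$-valued holomorphic functions, this forces the equality of modular forms $\Ind\,\psi_i(f) = \pi_i\big( T_{M_i}\,\Ind(f) \big)$ inside $\rmM_k(\Ind_\Gamma\,\chi)$. (Equivalently, $\pi_i$ being a surjection of representations, the scalar-product identity for all $v$ is just this equality written out in coordinates.)

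The core of the argument is then immediate. By hypothesis $\Ind(f) \in \Ind\,\rmV\rmM_i \subseteq \rmE\rmE(\Ind_{\Gamma_i}\,\chi_i)$; the Hecke-stability half of Lemma~\ref{la:EE-stable-under-morphisms-and-hecke} gives $T_{M_i}\,\Ind(f) \in \rmE\rmE\big( T_{M_i}\,\Ind_{\Gamma_i}\,\chi_i \big)$; and the morphism-stability half, applied to the representation homomorphism $\pi_i$, gives $\pi_i\big( T_{M_i}\,\Ind(f) \big) \in \rmE\rmE(\Ind_\Gamma\,\chi)$. Combining this with the previous paragraph shows $\Ind\,\psi_i(f) \in \rmE\rmE(\Ind_\Gamma\,\chi)$, which completes the reduction and hence the proof.

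I do not expect a genuine obstacle: the argument is essentially formal once Lemma~\ref{la:EE-stable-under-morphisms-and-hecke} is in hand, and it is exactly the two stability properties proved there that make the chain go through. The only points deserving a word of care are the promotion of the componentwise identity to an identity of vector valued modular forms, and the (routine) remark that $\rmE\rmE(\Ind_\Gamma\,\chi) \subseteq \rmM_k(\Ind_\Gamma\,\chi)$, so that the two opposite inclusions collapse into the equality asserted by ${\rm Span}$.
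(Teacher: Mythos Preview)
Your proof is correct and follows essentially the same route as the paper: apply $\Ind$ to the spanning hypothesis, use the intertwining identity to rewrite $\Ind\,\psi_i(f)$ as $\pi_i\big(T_{M_i}\,\Ind(f)\big)$, and then invoke the two halves of Lemma~\ref{la:EE-stable-under-morphisms-and-hecke} in turn. The only cosmetic difference is that you argue elementwise for a single $f \in \rmV\rmM_i$ while the paper carries whole subspaces through the chain of inclusions; you are also a bit more explicit about why the componentwise scalar-product identity upgrades to an equality of vector valued modular forms.
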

\begin{proof}
We apply the map $\Ind$ defined in~\eqref{eq:def:induction-on-modular-forms} to the assumption
\begin{gather*}
  \rmM_k(\Gamma, \chi)
\subseteq
  \sum_i \psi_i \big( \rmV\rmM_i \big)
\,\tx{,}\quad\tx{implying that}\quad
  \rmM_k(\Ind_\Gamma\, \chi)
\subseteq
  \sum_i \Ind \Big( \psi_i \big( \rmV\rmM_i \big) \Big)
\tx{.}
\end{gather*} 
On the right hand side we can employ the intertwining properties of the projections~$\pi_i$ to obtain
\begin{gather*}
  \rmM_k(\Ind_\Gamma\, \chi)
\subseteq
  \sum_i \pi_i \circ T_M\big( \Ind\, \rmV\rmM_i \big)
\tx{.}
\end{gather*}

Let us now assume that $\Ind\, \rmV\rmM_i \subseteq \rmE\rmE\big( \Ind_{\Gamma_i}\,\chi_i \big)$ for all $i$, and let us deduce that this implies, as desired, that ${\rm Span}(\Ind_\Gamma\,\chi)$ is true. Continuing our previous chain of inclusions, we find that 
\begin{gather*}
  \rmM_k(\Ind_\Gamma\, \chi)
\subseteq
  \sum_i \pi_i \circ T_M\Big( \rmE\rmE_k\big( \Ind_{\Gamma_i}\, \chi_i \big) \Big)
\subseteq
  \sum_i \pi_i \Big( \rmE\rmE_k\big( T_M\, \Ind_{\Gamma_i}\, \chi_i \big) \Big)
\subseteq
  \sum_i \rmE\rmE_k\Big( \pi_i\, T_M\big( \Ind_{\Gamma_i}\, \chi_i \big) \Big)
\tx{.}
\end{gather*}
The last and next to last inclusions follow from Lemma~\ref{la:EE-stable-under-morphisms-and-hecke}, which says that the spaces $\rmE\rmE$ are stable under the application of representation homorphisms and Hecke operators. It is part of the assumptions that $\pi_i T_{M_i}\big( \Ind_{\Gamma_i}\,\chi_i \big)$ equals $\Ind_\Gamma\,\chi$, so that we obtain that
\begin{gather*}
  \rmM_k(\Ind_\Gamma\, \chi)
\subseteq
  \sum_i \rmE\rmE_k\big( \Ind_\Gamma\, \chi \big)
=
  \rmE\rmE_k\big( \Ind_\Gamma\, \chi \big)
\tx{.}
\end{gather*}
\end{proof}

\subsubsection{Proof of the Main Theorem: Reduction to the case of $\rho = \rho_M$}

We now start to actually prove the Main Theorem. Using the tools developed in the previous two lemmas, we show that it suffices to establish ${\rm Span}(\rho_M)$ for all $M$ in order to settle all other cases.

Recall the assumptions: $\rho$ is a representation of~$\SL{2}(\ZZ)$ whose kernel is a congruence subgroup. Further, $k, l$ are even integers satisfying $l, k -l \ge 4$.

We carry out the reduction in five steps.
\begin{enumeratearabic}
\item
Since $\rho$ is finite dimensional and has finite index kernel, it is unitary. Therefore $\rho = \bigoplus_i \rho_i$ for a finite number of irreducible $\rho_i$. By Lemma~\ref{la:proof-of-main-theorem:decomposing-representations}, it thus suffices to treat irreducible $\rho$.

\item
By the same argument, we can prove the theorem for $\rho = \Ind_{\Gamma(M)} \bbone$ for all positive $M$ instead of irreducible~$\rho$. Indeed, suppose that, given an irreducible~$\rho$, we have $\Gamma(M) \subseteq \ker\,\rho$ for some~$M$, then $\rho \subseteq \Ind_{\Gamma(M)} \bbone$. Lemma~\ref{la:proof-of-main-theorem:decomposing-representations} shows that ${\rm Span}\big( \Ind_{\Gamma(M)} \bbone \big)$ implies ${\rm Span}(\rho)$.

\item
Recall the rescaling operator ${\rm sc}_{1\slash M}$ from~\eqref{eq:def:rescaling-of-arguments}. We have
\begin{gather*}
  \rmM_k(\Gamma(M))
\subseteq
  {\rm sc}_{1 \slash M} \big( \rmM_k(\Gamma_1(M^2,M)) \big)
\tx{.}
\end{gather*}
Proposition~\ref{prop:classical-principal-congruence-subgroups} asserts existence of an inclusion
\begin{gather*}
  \iota_{\Gamma(M)} :
  \rho_{\Gamma(M)}
\lhra
  T_N\, \rho_{\Gamma_1(M^2, M)}
\end{gather*}
that intertwines ${\rm sc}_{1 \slash M}$ and the vector valued Hecke operator $T_M$. Therefore Lemma~\ref{la:proof-of-main-theorem:employ-intertwining} allows us to restrict ourselves to the case $\rho = \rho_{\Gamma_1(M)} = \Ind_{\Gamma_1(M)}\,\bbone$ for arbitrary~$M$ (we have replaced $M^2$ by $M$ in this very last sentence).

\item
We have $\Ind_{\Gamma_1(M)}\,\bbone = \bigoplus_\chi\, \Ind_{\Gamma_0(M)}\,\chi$, where $\chi$ runs through all Dirichlet characters mod~$M$. Lemma~\ref{la:proof-of-main-theorem:decomposing-representations} thus allows us to focus on the case $\rho = \rho_\chi = \Ind_{\Gamma_0(M)}\, \chi$ for arbitrary $M$ and arbitrary~$\chi$.
\end{enumeratearabic}

The fifth reduction step requires an inductive argument. Observe that $k$ is even by the assumptions, and therefore $\rmM_k(\chi) \cong \rmM_k\big( \Ind_{\Gamma_0(M)}\,\chi \big)$ is trivial, if $\chi$ is odd. In other words, we can and will assume that $\chi$ is even. It therefore has a square root: $\chi = \chi^{\prime\, 2}$. Classical theory of modular forms asserts that
\begin{gather}
\label{eq:main-theorem:chi-twisting}
  \rmM_k(\chi)
\subseteq
  {\rm twist}_{\chi'}\big( \rmM_k(N^3) \big)
  +
  \rmM^{\rm old}_k(\chi)
\,\tx{,}\quad
  \rmM^{\rm old}_k(\chi)
=
  \sum_{\substack{M' \isdiv M \\ M' \ne M}}
  \sum_{\chi_{M'}}
  {\rm sc}_{M \slash M'} \big( \rmM_k(\chi_{M'}) \big)
\end{gather}
where $\chi_{M'}$ runs through Dirichlet characters mod~$M'$. Indeed, given $f \in \rmM_k(\chi)$, we consider its twist $f_{\chi'} \in \rmM_k(N \cdot N^2)$. The difference $f - \mathrm{twist}_{\chi'}( f_{\chi'} )$ has Fourier expansion supported on $n$ with $\gcd(n,N) \ne 1$. In other words, it is an old form.

We reduce ourselves to the proving that ${\rm Span}(\rho_M)$ holds for all $M$ by employing Lemma~\ref{la:proof-of-main-theorem:employ-intertwining} in conjunction with induction on~$M$. For the time being assume that ${\rm Span}(\rho_M)$ is true for all $M$. We will establish this in Sections~\ref{ssec:proof-of-main-theorem:atkin-lehner}, \ref{ssec:proof-of-main-theorem:twists}, and~\ref{ssec:proof-of-main-theorem:non-vanishing}---without recursing to the statement ${\rm Span}(\rho_\chi)$ for non-trivial~$\chi$.

Fixing~$M$, our induction hypothesis is that ${\rm Span}(\Ind_{\Gamma_0(M')}\, \chi_{M'})$ holds for all $M' \isdiv M$ with $M' \ne M$ and all Dirichlet characters mod~$M'$. We then show that ${\rm Span}(\Ind_{\Gamma_0(M)}\, \chi)$ for any Dirichlet character~$\chi$ mod~$M$. If $M = 1$ then the only Dirichlet character is the trivial one. This establishes the induction basis. For the induction step, assume that $M > 1$. Proposition~\ref{prop:twists-of-modular-forms} guarantees existence of $\iota_{\rm twist}$ intertwining $T_{M}$ and the twisting~${\rm twist}_{\chi'}$. Proposition~\ref{prop:classical-oldforms} asserts existence of $\iota_{\rm old}$ intertwining $T_{M \slash M'}$ and the old form construction ${\rm sc}_{M \slash M'}$. Combine this with the inclusion in~\eqref{eq:main-theorem:chi-twisting} to see that the assumptions of Lemma~\ref{la:proof-of-main-theorem:employ-intertwining} are satisfied. The statement ${\rm Span}(\Ind_{\Gamma_0(M)}\,\chi)$ is therefore implied by ${\rm Span}(\Ind_{\Gamma_0(M)}\,\bbone)$ and ${\rm Span}(\Ind_{\Gamma_0(M')}\,\chi_{M'})$ for all $M' \isdiv M$ with $M' \ne M$. The induction hypothesis implies the latter. The former is part of our standing assumption that $\mathrm{Span}(\rho_M)$ be true for all $M$. This finishes our argument.

\subsubsection{Proof of the Main Theorem: Classical Atkin-Lehner-Li Theory}
\label{ssec:proof-of-main-theorem:atkin-lehner}

Summarizing, we have shown so far that is suffices to establish ${\rm Span}(\rho_M)$ for all positive~$M$. More precisely, we have to show that $\rmM_k(\rho_M) = \rmE\rmE_k(\rho_M)$, where $\rmE\rmE_k(\rho)$ was defined in~\eqref{eq:proof-of-main-theorem:right-hand-side-EE} for any~$\rho$. In this Section, we will nevertheless treat $\rho_\chi$ for arbitrary Dirichlet characters~$\chi$, since no additional complications arise from this level of generality.

We can identify $\rmM_k(\rho_\chi)$ with the space of classical modular forms $\rmM_k(\chi)$ via the inverse to $\Ind$ given in~(\ref{eq:def:induction-on-modular-forms}):
\begin{gather*}
  \Ind^{-1} :\,
  \rmM_k(\rho_\chi) \lra \rmM_k(\chi)
\text{,}\quad
  f \lmto \langle f,\, \frake_{\Gamma_0(M)} \rangle
\text{.}
\end{gather*}
Set $\rmE\rmE'_k(\rho_\chi) = \Ind^{-1}\, \rmE\rmE_k(\rho_\chi) \subseteq \rmM_k(\chi)$. As for~$\rmE\rmE$ we suppress the subscript of~$\rmE\rmE'$ except for few cases.

From classical Atkin-Lehner-Li theory we find that
\begin{gather*}
  \rmM_k(\chi)
=
  \bigoplus_{M' \isdiv M}
  {\rm sc}_{M \slash M'} \, \rmM^{\rm new}_k(\Gamma_0(M'),\, \chi)
\,\tx{,}
\end{gather*}
where $\rmM^{\rm new}_k(\Gamma_0(M'), \chi)$ is the space of level~$M'$ newforms for $\chi$ if $\chi$ extends as a character of the group $\Gamma_0(M)$ to $\Gamma_0(M')$, and $\rmM^{\rm new}_k(\Gamma_0(M'), \chi) = \{0\}$ if it does not. Proposition~\ref{prop:classical-oldforms} and Lemma~\ref{la:proof-of-main-theorem:employ-intertwining} show that it suffices to establish that $\rmM^{\rm new}_k(\chi) \subseteq \rmE\rmE'(\rho_\chi)$ for all~$\chi$, in order to deduce~${\rm Span}(\rho_\chi)$.

Our next goal is to show that $\rmE\rmE'(\rho_\chi)$ is a Hecke module with respect to the classical Hecke algebra. That is, it is closed under the action of classical Hecke operators of level coprime to~$M$ and under the action of Atkin-Lehner involutions. For $M'$ coprime to~$M$, we have
\begin{gather*}
  \Ind \big( \rmE\rmE'(\rho_\chi) \big|_k\, T_{M'} \big)
=
  \pi_{\rm Hecke} \big( T_{M'} \Ind\,\rmE\rmE'(\rho_\chi) \big)
=
  \pi_{\rm Hecke} \big( T_{M'}\, \rmE\rmE(\rho_\chi) \big)
\subseteq
  \rmE\rmE\big( T_{M'}\,\rho_\chi \big)
\tx{.}
\end{gather*}
The first equality is stated in Proposition~\ref{prop:classical-hecke-operator}. The second equality follows from the definition of~$\rmE\rmE'$. The inclusion, employed at the end, is proved in Lemma~\ref{la:EE-stable-under-morphisms-and-hecke}. A similar argument shows that $\rmE\rmE'(\rho_\chi)$ is also closed under the action of all Atkin-Lehner involutions.

As a consequence of this, we can focus on newforms. The remainder of the proof is about deducing a contradiction to $\rmM^{\rm new}_k(\Gamma_0(M)) \nsubseteq \rmE\rmE'_k(\rho_M)$. Suppose that indeed $\rmM^{\rm new}_k(\Gamma_0(M)) \nsubseteq \rmE\rmE'_k(\rho_M)$ for some~$M$. Then the orthogonal complement with respect to the regularized scalar product of $\rmE\rmE'_k(\rho_M)$ in $\rmS^{\rm new}_k(\Gamma_0(M))$ in nonempty. Since both $\rmE\rmE'_k(\rho_M)$ and $\rmS^{\rm new}_k(\Gamma_0(M))$ are classical Hecke modules, that orthogonal complement is too. In particular, we find a newform~$f$ such that $\langle f, g \rangle = 0$ for all $\rmE\rmE'(\rho_M)$. We will show that this is impossible.

\subsubsection{Proof of the Main Theorem: Twists of newforms}
\label{ssec:proof-of-main-theorem:twists}

To treat the case $l = k \slash 2$, we have to consider twists of newforms. We claim that
\begin{gather}
\label{eq:proof-of-main-theorem:twist-is-orthogonal}
  \forall g \in \rmE\rmE'(\rho_M) \,:\,
  \langle f, g \rangle = 0
\quad\Longrightarrow\quad
  \forall \epsilon \tx{ Dirichlet character mod~$N$}\;
  \forall g \in \rmE\rmE'(\rho_{\epsilon^2}) \,:\,
  \langle f_\epsilon, g \rangle = 0
\tx{.}
\end{gather}
To see this, we first pass to the induction:
\begin{gather*}
  \langle f_\epsilon, g \rangle
=
  \Big\langle \Ind(f_\epsilon),\, \Ind(g) \Big\rangle
=
  \Big\langle T_N\,\Ind(f),\, \iota_{\rm twist}\big( \Ind(g) \big) \Big\rangle
=
  \Big\langle \Ind(f),\, \pi_{\rm adj}\big( T_N\, \iota_{\rm twist}\big( \Ind(g) \big) \Big\rangle
\tx{.}
\end{gather*}
The second equality is the part of the statement of Proposition~\ref{prop:twists-of-modular-forms}. The third one follows from the adjunction formula in Proposition~\ref{prop:adjoint-of-hecke-operator}. In order to prove that $\langle f_\epsilon, g \rangle = 0$, it remains to be shown that $\pi_{\rm adj} T_N\, \iota_{\rm twist}\, \Ind\,g \in \rmE\rmE$. This is consequence of Lemma~\ref{la:EE-stable-under-morphisms-and-hecke}, saying that $\rmE\rmE$ is mapped to itself by homomorphisms of representations and by Hecke operators.

\subsubsection{The proof of the Main Theorem: Nonvanishing of $L$-functions}
\label{ssec:proof-of-main-theorem:non-vanishing}

We are left with showing that for every newform~$f$ for~$\Gamma_0(M)$ there is a character~$\epsilon$ mod~$N$ such that $\langle f_\epsilon, g \rangle \ne 0$ for at least one $g \in \rmE\rmE'(\rho_{\epsilon^2})$. By interchanging the role of the first and second factor in the definition of $\rmE\rmE$, we may assume that $l \le k \slash 2$. Fix a negative fundamental discriminant~$D$, and consider the Kronecker character~$\epsilon_D$ and its square $\bbone_{|D|} = \epsilon_D^2$, which is a trivial, non-primitive Dirichlet character. In particular, the classical Eisenstein series
\begin{gather*}
  E_{l, \bbone_{|D|}}(\tau)
=
  \sum_{n=1}^\infty
  \sigma_{k-1,\bbone_{|D|},\bbone}(n)\, q^n
\end{gather*}
considered in Lemma~\ref{la:fourier-expansion-of-eisenstein-series} is an oldform that comes from level one. Further, recall the Eisenstein series
\begin{gather*}
  E_{k-l,\bbone_{|D|},\infty}
=
  \sum_{\gamma \in \Gamma_\infty \backslash \SL{2}(\ZZ)}
  \frake_{\Gamma_0(|D|)} \otimes \frake_{\Gamma_0(|D|)}
  \big|_{k, \rho_{\bbone_{|D|}} \otimes \rho_{\bbone_{|D|}}}\, \gamma
\end{gather*}
that was defined in Lemma~\ref{la:eisenstein-series-nonzero-at-one-cusp}. We can view $E_{k-l,\bbone_{|D|},\infty}$ as an Eisenstein series of type $\rho_{|D|} \otimes \rho_{M|D|^2}$ by means of the inclusion $\rho_{|D|} \hra \rho_{M |D|^2}$. We have
\begin{gather*}
  g
:=
  \pi\Big( \Ind\big( E_{l, \bbone_{|D|}} \big) \,\otimes\, \rmE_{k-l,\bbone_{|D|},\infty} \Big)
\in
  \rmE\rmE\big( \bbone_{|D|} \big)
\,\tx{,}
\\
  \pi :\,
  \rho_{|D|} \otimes ( \rho_{|D|} \otimes \rho_{M|D|^2} )
\lthra
  \rho_{M |D|^2}
\,\tx{;}
  \frake_\gamma \otimes \big( \frake_{\gamma'} \otimes \frake_{\gamma''} \big)
\lmto
  \begin{cases}
    \frake_{\gamma''}
  \,\tx{,}
  & \tx{if $\gamma = \gamma'$;}
  \\
     0
  \tx{,}
  & \tx{otherwise.}
  \end{cases}
\end{gather*}
In particular, by~\eqref{eq:proof-of-main-theorem:twist-is-orthogonal}, we have $\langle \Ind\, f_\epsilon,\, g \rangle = \langle f_\epsilon,\, \Ind^{-1}(g) \rangle = 0$ for this specific~$g$.

We now evalute $\langle \Ind\, f_\epsilon,\, g \rangle$ using the inclusion $\iota$ adjoint to $\pi$ to gain some flexibility with respect to the classical Eisenstein series $E_{l, \bbone_{|D|}}$.
\begin{gather*}
  \langle \Ind\, f_\epsilon,\, g \rangle
=
  \Big\langle
  \iota\big( \Ind\, f_\epsilon \big),\,
  \Ind\,E_{l, \bbone_{|D|}} \,\otimes\, E_{k-l,\bbone_{|D|},\infty}
  \Big\rangle
\tx{.}
\end{gather*}
We introduce a spectral parameter $s \in \CC$ for the Eisenstein series $E_{k-l,\bbone_{|D|},\infty}$, which we will later specialize to~$s = 0$. If $\Re\,s \gg 0$, then Proposition~\ref{prop:evaluation-of-pairing} computes the following Petersson scalar product in terms of a convolution Dirichlet series, and Proposition~\ref{prop:product-decomposition-of-convolution} allows us to factor it:
\begin{multline*}
  \Big\langle
  \iota\big( \Ind\, f_\epsilon \big),\,
  \Ind\,E_{l, \bbone_{|D|}} \,\otimes\, E_{k-l,\bbone_{|D|},\infty,\, s}
  \Big\rangle
\\
=
  (4 \pi)^{1-k}
  \sum_{n=1}^\infty n^{1-k + s}
  \sigma_{l-1, \bbone_{|D|}, \bbone}(n)\, c(f_{\epsilon_D};\, n)
=
  \frac{L\big(f_{\epsilon_D}, k-1 + s \big) L\big(f_{\epsilon_D} \times \bbone_{|D|},\, k-1 + s - (l-1) \big)}
       {L\big( \bbone_{|D|}\,\bbone_{|D|} \bbone_{M |D|^2},\, 2(k-1 + s) - (l-1) + 1 - k \big)}
\end{multline*}
The left and right hand side have analytic continuations to $s=0$, and from this we conclude that
\begin{gather}
\label{eq:proof-of-main-theorem:vanishing-L-function-product}
  L\big(f_{\epsilon_D}, k-1 \big)
  L\big(f_{\epsilon_D},\, k - l \big)
=
  0
\tx{.}
\end{gather}

The first factor cannot vanish, since $k - 1 > (k + 1) \slash 2$ lies in the convergent region of the Euler product for $L(f_{\epsilon_D}, \,\cdot\, )$. If $l < (k - 1) \slash 2$ then the second factor is nonzero by the same argument. This contradicts~\eqref{eq:proof-of-main-theorem:vanishing-L-function-product} and thus finishes the proof except if $(k - 1) \slash 2 \le l \le k \slash 2$, that is~$l = k \slash 2$. In the latter case we consider the central value of an $L$-function twisted by a imaginary quadratic character. We have to allude to Waldspurger's and Kohnen-Zagier's work~\cite{waldspurger-1981,kohnen-zagier-1984}. Their work on the Shimura correspondence shows that $L(f \times \epsilon_D, k \slash 2 )^{\frac{1}{2}}$ appears, up to normalizing factors, as the coefficient of a half-integral weight newform.

Since $D$ was an arbitrary but fixed negative fundamental discriminant, the theorem follows if we show that a newform~$h$ of half-integral weight
\begin{gather*}
  h(\tau)
=
  \sum_D c(D)\, q^D
\end{gather*}
vanishes if $c(D) = 0$ for all $D$ that are fundamental. By the Hecke theory for half-integral modular forms in Theorem~1.7 of~\cite{shimura-1973}, we find that $c(n^2 D) = 0$ for all $n \in \ZZ_{\ge 0}$. This finishes our proof of the main theorem.

\subsection{Example: Weight~\texpdf{$12$}{12}, level~\texpdf{$3$}{3}}

We illustrate the Theorem for weight~$12$ and level~$3$ modular forms.  Over~$\QQ$ we have $\Ind_{\Gamma_0(3)}\, \bbone \cong \bbone \oplus \rho_3$ for a three dimension representation. The second component has representation matrices
\begin{gather*}
  \rho_3(T)
=
  \begin{pmatrix}
  1 & 0 & 0 \\
  0 & 0 & 1 \\
  -1 & -1 & -1
  \end{pmatrix}
\,\text{,}\quad
  \rho_3(S)
  =
  \begin{pmatrix}
  0 & 1 & 0 \\
  1 & 0 & 0 \\
  -1 & -1 & -1
  \end{pmatrix}
\quad\tx{with respect to the basis}\quad
  \left(\begin{matrix} 1 \\ 0 \\ 0 \\ -1 \end{matrix}\right)
\tx{,}
  \left(\begin{matrix} 0 \\ 0 \\ 1 \\ -1 \end{matrix}\right)
\tx{,}
  \left(\begin{matrix} 0 \\ 1 \\ 0 \\ -1 \end{matrix}\right)
\tx{.}
\end{gather*}
Since the component $\bbone$ corresponds to level~$1$ modular forms, which are spanned by, say, $E_{12}$ and~$E_6^2$, it suffices to find a spanning set for $\rmM_{12}(\rho_3)$.

For $\rho_T := T_3\,\bbone$, we choose the basis
\begin{gather*}
  \frake_1
:=
  \frake_{\left(\begin{smallmatrix} 3 & 0 \\ 0 & 1 \end{smallmatrix}\right)}
\text{,}\,
  \frake_2
:=
  \frake_{\left(\begin{smallmatrix} 1 & 0 \\ 0 & 3 \end{smallmatrix}\right)}
\text{,}\,
  \frake_3
:=
  \frake_{\left(\begin{smallmatrix} 1 & 1 \\ 0 & 3 \end{smallmatrix}\right)}
\text{,}\,
  \frake_4
:=
  \frake_{\left(\begin{smallmatrix} 1 & 2 \\ 0 & 3 \end{smallmatrix}\right)}
\text{,}
\end{gather*}
so that
\begin{gather*}
  \rho_T(T)
=
  \begin{pmatrix}
  1 & 0 & 0 & 0 \\
  0 & 0 & 1 & 0 \\
  0 & 0 & 0 & 1 \\
  0 & 1 & 0 & 0
  \end{pmatrix}
\,\text{,}\quad
  \rho_T(S)
=
  \begin{pmatrix}
  0 & 1 & 0 & 0 \\
  1 & 0 & 0 & 0 \\
  0 & 0 & 0 & 1 \\
  0 & 0 & 1 & 0
  \end{pmatrix}
\,\text{.}
\end{gather*}

The tensor product $\rho_T \otimes \rho_T$ contains four copies of $\rho_3$.  The subspace of vectors that are fixed by $(\rho_T \otimes \rho_T)(T)$ is spanned by 
\begin{align*}
  \frakf_1
&
:=
  3 \frake_1 \otimes \frake_1
  - \frake_2 \otimes \frake_2 - \frake_3 \otimes \frake_3 - \frake_4 \otimes \frake_4
\,\text{,}
\\
  \frakf_2
&
:=
  \frake_1 \otimes \frake_2 + \frake_1 \otimes \frake_3
  + \frake_1 \otimes \frake_4 - \frake_2 \otimes \frake_4
  - \frake_3 \otimes \frake_2 - \frake_4 \otimes \frake_3
\,\text{,}
\\
  \frakf_3
&
:=
  \frake_2 \otimes \frake_1 - \frake_2 \otimes \frake_4
  + \frake_3 \otimes \frake_1 - \frake_3 \otimes \frake_2
  + \frake_4 \otimes \frake_1 - \frake_4 \otimes \frake_3
\,\text{,}
\\
  \frakf_4
&
:=
  \frake_2 \otimes \frake_3 - \frake_2 \otimes \frake_4
  - \frake_3 \otimes \frake_2 + \frake_3 \otimes \frake_4
  + \frake_4 \otimes \frake_2 - \frake_4 \otimes \frake_3
\,\text{.}
\end{align*}

The space $\rmM_{12}(\rho_3)$ has dimension $\dim\,\rmM_{12}(\Ind_{\Gamma_0(3)}) - \dim\,\rmM_{12} = 3$.  There are many possible ways to span $\rmS_{12}(\rho_3)$.  In order to emphasis the vector valued approach, we compute the components $\frakf_1$, $\frakf_2$, $\frakf_3$, and $\frakf_4$ of $(T_3\, E_4) \otimes (T_3\, E_8)$, where $E_4$ and $E_8$ are the level~$1$ Eisenstein series of weight~$4$ and $8$, respectively.  These components, one checks, are modular forms for $\Gamma_0(3)$ and determine the associated modular form in $\rmM_{12}(\rho_3)$ uniquely.
\begin{align*}
  f_1
:=
  \big\langle T_3 E_{4} \otimes T_3 E_{8},\, \frakf_1 \big\rangle
&=
  \tfrac{531440}{177147} - \tfrac{1883840}{19683}q - \tfrac{1274566720}{6561}q^{2} - \tfrac{330565225280}{19683}q^{3} - \tfrac{7831774435520}{19683}q^{4} + O\big( q^{5} \big)
\,\text{,}
\\[.5em]
  f_2
:=
  \big\langle T_3 E_{4} \otimes T_3 E_{8},\, \frakf_2 \big\rangle
&=
  \tfrac{80}{177147} + \big(-\tfrac{512000}{6561} \zeta_{3} + \tfrac{9449920}{19683}\big)q + \big(\tfrac{87040000}{2187} \zeta_{3} + \tfrac{774666560}{6561}\big)q^{2}
\\
&\qquad
  + \big(\tfrac{773632000}{729} \zeta_{3} + \tfrac{33711845440}{19683}\big)q^{3} + \big(\tfrac{14190592000}{6561} \zeta_{3} + \tfrac{122684877760}{19683}\big)q^{4} 
 + O\big( q^{5} \big)
\,\text{,}
\\[.5em]
  f_3
:=
  \big\langle T_3 E_{4} \otimes T_3 E_{8},\, \frakf_3 \big\rangle
&=
  \tfrac{6560}{177147} + \big(-\tfrac{512000}{6561} \zeta_{3} + \tfrac{4896640}{19683}\big)q + \big(\tfrac{87040000}{2187} \zeta_{3} + \tfrac{382920320}{6561}\big)q^{2}
\\
&\qquad
  + \big(\tfrac{773632000}{729} \zeta_{3} + \tfrac{13172759680}{19683}\big)q^{3} + \big(\tfrac{14190592000}{6561} \zeta_{3} - \tfrac{32958078080}{19683}\big)q^{4}
  + O\big( q^{5} \big)
\,\text{,}
\\[.5em]
  f_4
:=
  \big\langle T_3 E_{4} \otimes T_3 E_{8},\, \frakf_4 \big\rangle
&=
  \big(-\tfrac{1024000}{6561} \zeta_{3} - \tfrac{512000}{6561}\big)q + \big(\tfrac{174080000}{2187} \zeta_{3} + \tfrac{87040000}{2187}\big)q^{2}
\\
&\qquad
  + \big(\tfrac{1547264000}{729} \zeta_{3} + \tfrac{773632000}{729}\big)q^{3} + \big(\tfrac{28381184000}{6561} \zeta_{3} + \tfrac{14190592000}{6561}\big)q^{4}
  + O\big( q^{5} \big)
\,\text{.}
\end{align*}

We close this example by isolating the (unique) newform in $\rmM_{12}(\Gamma_0(3))$, which has initial Fourier expansion $q + 78 q^2 - 243 q^3 + 4036 q^4 + O(q^5)$. Solving for coefficients of $q^0$ through $q^3$ yields the expression
\begin{multline*}
 \big( \tfrac{2792336247}{93514815104000} \zeta_3 - \tfrac{7143127641}{187029630208000} \big) f_1
 +
 \big( \tfrac{118065190221}{93514815104000} \zeta_3 + \tfrac{2892599667}{187029630208000} \big) f_2
 \\
 +
 \big( \tfrac{227653108281}{93514815104000} \zeta_3 + \tfrac{144661293657}{46757407552000} \big) f_3
 +
 \big( \tfrac{227653108281}{187029630208000} \zeta_3 - \tfrac{144661293657}{93514815104000} \big) f_4
\,\tx{.}
\end{multline*}

\subsection{A curious vanishing condition}

As an immediate consequence of our Main Theorem, we get the following vanishing condition, for which at the moment, though, we have no application. For simplicity, we focus on the case $l < \frac{k}{2}$.
\begin{corollary}
\label{cor:strange-vanishing-condition}
Let $k$, $l$, and $\rho$ be as in the Main Theorem, and in addition assume that $l < \frac{k}{2}$. There is $0 < N_0 \in \ZZ$ such that if for $N, N' \isdiv N_0$
\begin{gather*}
  \rho_{N,\bbone}
=
  \big( \rho_N \otimes \rho_{N'} \otimes \rho^\vee \big)(\bbone)
\end{gather*}
has basis
\begin{gather*}
  \sum_{m_c, m_\sigma \in \Delta_N}
  \frake_{m_c} \otimes \frake_{m_\sigma} \otimes v_{N, i}(m_c, m_\sigma)
\,\txt{,}\quad
  1 \le i \le \dim\,\rho_{N,\bbone}
\end{gather*}
with $v_{N,i}(m_c,m_\sigma) \in V(\rho)^\vee$, then the following vanishing condition holds:  Given $f \in \rmM_k(\rho)$, then $f=0$ if for all $i$ and all $N \isdiv N_0$, we have
\begin{gather*}
  0
=
  \sum_{m_c,m_\sigma}
  a_c^{k-l} a_\sigma^l\;
  \sum_{n \in \QQ}
  e\big( \frac{- b_\sigma n}{a_\sigma} \big)\, \sigma_{l-1}\big( \frac{d_\sigma n}{a_\sigma} \big)\,
  \ov{v}_{N,i}(m_c,m_\sigma)\big( c(f;\,n) \big)
\txt{,}
\end{gather*}
where $m_c = \left(\begin{smallmatrix} a_c & b_c \\ c_c & d_c \end{smallmatrix}\right)$ and $m_\sigma = \left(\begin{smallmatrix} a_\sigma & b_\sigma \\ c_\sigma & d_\sigma \end{smallmatrix}\right)$.
\end{corollary}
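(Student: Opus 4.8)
The plan is to read each displayed vanishing sum as a (regularized) Petersson inner product of $f$ against one of the generators on the right-hand side of the Main Theorem~(Theorem~\ref{thm:maintheorem}), and then to exclude a nonzero $f$ one Hecke eigenform at a time, running the non-vanishing argument of Section~\ref{ssec:proof-of-main-theorem:non-vanishing} but now across the Eisenstein spectrum as well, since for a general $f\in\rmM_k(\rho)$ we can no longer discard the summand $\rmE_k(\rho)$.

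First I would make the dictionary explicit. A basis vector $\sum_{m_c,m_\sigma}\frake_{m_c}\otimes\frake_{m_\sigma}\otimes v_{N,i}(m_c,m_\sigma)$ of $\big(\rho_N\otimes\rho_{N'}\otimes\rho^\vee\big)(\bbone)$ is, under the adjunction $\big(\rho_N\otimes\rho_{N'}\otimes\rho^\vee\big)(\bbone)\cong\Hom(\rho,\rho_N\otimes\rho_{N'})$, the adjoint $\phi_{N,i}^\ast$ of a homomorphism $\phi_{N,i}\colon\rho_N\otimes\rho_{N'}\to\rho$. Pushing $\phi_{N,i}$ across the pairing by Proposition~\ref{prop:adjoint-of-hecke-operator} and unfolding by Proposition~\ref{prop:evaluation-of-pairing}, the weight-$l$ factor $E_l\big|_l m_\sigma$ contributes the Fourier data $a_\sigma^{l}\,e(-b_\sigma n/a_\sigma)\,\sigma_{l-1}(d_\sigma n/a_\sigma)$ while the weight-$(k-l)$ factor $E_{k-l}\big|_{k-l}m_c$ supplies the factor $a_c^{k-l}$ and plays the role of the unfolding Eisenstein series. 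This identifies the displayed quantity, up to the nonzero constant $\Gamma(k-1)/(4\pi)^{k-1}$, with $\big\langle f,\,\phi_{N,i}\big(T_N\,\rmE_{k-l}(\bbone)\otimes T_{N'}\,\rmE_l(\bbone)\big)\big\rangle$; so the hypothesis says exactly that $f$ is orthogonal to every product-generator with $N,N'\isdiv N_0$.

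Next I would fix $N_0$ so that finitely many indices $N,N'\le N_0$ already realize the full product span (the first remark after the Main Theorem, via finite generation of the Hecke algebra), and record, as in Section~\ref{ssec:proof-of-main-theorem:atkin-lehner}, that this span is stable under the classical Hecke algebra: applying a Hecke operator to a product yields another product by Theorem~\ref{thm:inclusion-of-hecke-operator-tensor-product-on-modular-forms} and Lemma~\ref{la:EE-stable-under-morphisms-and-hecke}, and similarly for Atkin--Lehner involutions. Since the Hecke algebra is closed under adjoints (its operators are normal for the regularized Petersson product on $\rmM_k(\rho)=\rmE_k(\rho)\oplus\rmS_k(\rho)$, an orthogonal decomposition), the orthogonal complement of the product span is again a Hecke submodule of $\rmM_k(\rho)$. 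A nonzero finite-dimensional Hecke module over $\CC$ contains a simultaneous eigenform, so it suffices to show that no nonzero Hecke eigenform $f$ — cuspidal or Eisenstein — can satisfy every vanishing condition.

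For a cuspidal eigenform this is precisely Section~\ref{ssec:proof-of-main-theorem:non-vanishing}: taking the generator built from $E_{l,\bbone_{|D|}}$ (Lemma~\ref{la:fourier-expansion-of-eisenstein-series}) and $E_{k-l,\bbone_{|D|},\infty}$ (Lemma~\ref{la:eisenstein-series-nonzero-at-one-cusp}) and combining Propositions~\ref{prop:evaluation-of-pairing} and~\ref{prop:product-decomposition-of-convolution} expresses the pairing as a ratio of $L$-functions whose numerator $L(f_{\epsilon_D},k-1)\,L(f_{\epsilon_D},k-l)$ is nonzero by $l<k/2$ (convergent range) together with the Waldspurger--Kohnen--Zagier nonvanishing at the central value, after the twisting reduction of Proposition~\ref{prop:twists-of-modular-forms}. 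The genuinely new point, and the main obstacle, is the Eisenstein eigenforms, which the Main Theorem could absorb into its separate summand $\rmE_k(\rho)$. Here I would run the same computation for an Eisenstein Hecke eigenform: its Rankin--Selberg convolution against the chosen product again factors through Proposition~\ref{prop:product-decomposition-of-convolution} into Dirichlet $L$-functions, which do not vanish at $k-1$ and $k-l$ (again using $l<k/2$), so the pairing is nonzero. The delicate step is to verify that the explicit Eisenstein generator has \emph{nonzero} pairing with each boundary eigenform — equivalently, that the constant terms of the products at the relevant cusps (computable from Weisinger's formulas~\cite{weisinger-1977}) form a nondegenerate system — rather than the stronger and still-open assertion that the products span $\rmE_k(\rho)$. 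Securing this nondegeneracy for the Eisenstein eigenforms is where the real work lies; granting it, every nonzero eigenform fails some vanishing condition, the orthogonal complement contains no eigenform, and hence $f=0$.
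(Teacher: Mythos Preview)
Your argument is much more elaborate than the paper's, which is a single sentence: compute the Fourier expansion of $T_M E$ for the level~$1$ Eisenstein series and apply Proposition~\ref{prop:evaluation-of-pairing}. The point is that the Main Theorem has \emph{already} been proved, so once you identify the displayed sums as (nonzero multiples of) the pairings $\big\langle f,\,\phi\big(T_N E_{k-l}\otimes T_{N'} E_l\big)\big\rangle$, the conclusion is immediate: by Theorem~\ref{thm:maintheorem} these products span $\rmM_k(\rho)$ modulo $\rmE_k(\rho)$, so orthogonality to all of them forces $f$ into the orthogonal complement, hence $f=0$. Your first paragraph carries out exactly this identification and is correct; but from that point on you rerun the entire proof of the Main Theorem --- Hecke stability of the span, reduction to eigenforms, and the $L$-function nonvanishing of Section~\ref{ssec:proof-of-main-theorem:non-vanishing} --- rather than simply citing it. This is not wrong, but it is redundant: you are reproving a theorem you are allowed to use.

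Your ``main obstacle'' concerning Eisenstein eigenforms is self-inflicted by this detour. The paper does not attempt a separate nondegeneracy argument against the Eisenstein spectrum; it relies on the spanning statement already established. You are right that there is a subtlety here --- the Main Theorem carries the extra summand $\rmE_k(\rho)$, and Remark~I(3) explicitly leaves open whether the products alone span it, while Proposition~\ref{prop:evaluation-of-pairing} is stated for $f\in\rmS_k(\rho)$ --- so the paper's one-line proof is really airtight only for cuspidal $f$. But the paper does not try to close that gap by the constant-term nondegeneracy you sketch; it simply records the corollary as a consequence of the Main Theorem and Proposition~\ref{prop:evaluation-of-pairing}. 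If you want a proof matching the paper's, stop after your first paragraph and invoke Theorem~\ref{thm:maintheorem}.
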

\begin{proof}
This follows when computing the Fourier expansion of $T_M\, E$ for Eisenstein series~$E$ and then applying Proposition~\ref{prop:evaluation-of-pairing}.
\end{proof}

%%%%%%%%%%%%%%%%%%%%%%%%%%%%%%%%%%%%%%%%%%%%%%%%%%
%%% BIBLIOGRAPHY

\renewbibmacro{in:}{}
\renewcommand{\bibfont}{\normalfont\small\raggedright}
\renewcommand{\baselinestretch}{.8}

\Needspace*{4em}
\begin{multicols}{2}
\printbibliography[heading=none]%[heading=bibnumbered]
\end{multicols}

% In case publishers don't support biblatex, switch to amsref (or plain bibtex) 
% \bibliographystyle{alpha}
% \bibliography{bibliography.bib}

%%%%%%%%%%%%%%%%%%%%%%%%%%%%%%%%%%%%%%%%%%%%%%%%%%
%%% AFFILIATIONS

\addvspace{1em}
\titlerule[0.15em]\addvspace{0.5em}

{\noindent\small
%Martin Raum\\
Chalmers tekniska högskola,
Institutionen för Matematiska vetenskaper,
SE-412 96 Göteborg, Sweden\\
E-mail: \url{martin@raum-brothers.eu}\\
Homepage: \url{http://raum-brothers.eu/martin}
}%\\[1.5ex]

\end{document}

%%% vim: spell spelllang=en_us